\documentclass[a4paper]{amsart}
\usepackage{graphicx}
\usepackage{amssymb}
\usepackage{amsmath}
\usepackage{amsthm}
\usepackage{amscd}
\usepackage[all,2cell]{xy}

\UseAllTwocells \SilentMatrices
\newtheorem{thm}{Theorem}[section]

\newtheorem{cor}[thm]{Corollary}
\newtheorem{lem}[thm]{Lemma}

\newtheorem{prop}[thm]{Proposition}
\theoremstyle{definition}

\theoremstyle{remark}

\numberwithin{equation}{section}

\begin{document}
\title[Retractions and Gorenstein homological properties]{Retractions and Gorenstein homological properties}

\author[Xiao-Wu Chen,  Yu Ye] {Xiao-Wu Chen, Yu Ye}

\thanks{The authors are supported by National Natural Science Foundation of China (No.10971206).}
\subjclass[2010]{16G20, 16E50, 16D90}
\date{\today}

\thanks{E-mail: xwchen$\symbol{64}$mail.ustc.edu.cn, yeyu$\symbol{64}$ustc.edu.cn}
\keywords{retractions, Gorenstein algebras, Gorenstein projective modules, Nakayama algebras, CM-free algebras}%

\maketitle

\dedicatory{}%
\commby{}%

\begin{abstract}
We associate to a localizable module a left retraction of algebras; it is a homological ring epimorphism that preserves singularity categories.  We study the behavior of left retractions with respect to Gorenstein homological properties (for example, being Gorenstein algebras or CM-free).

We apply the results to Nakayama algebras. It turns out that for a connected Nakayama algebra $A$, there exists a connected self-injective Nakayama algebra $A'$ such that there is a sequence of left retractions linking $A$ to $A'$; in particular, the singularity category of $A$ is triangle equivalent to the stable category of $A'$.  We classify connected Nakayama algebras with at most three simple modules according to Gorenstein homological properties.
\end{abstract}

\section{Introduction}
Let $A$ be an artin algebra. The modules we consider here are finitely generated left modules. The study of Gorenstein projective modules, that extend projective modules,  goes back to Auslander and Bridger \cite{ABr}, and it relates to the singularity category of algebras via the work of Buchweitz \cite{Buc}; also see \cite{KV, Hap91, Bel2000, Or04}. Gorenstein projective modules are also known as modules of G-dimension zero \cite{ABr}, totally reflexive modules \cite{AM} or (maximal) Cohen-Macaulay modules \cite{Buc, Bel2000}.

 Gorenstein projective modules play a central role in Gorenstein homological algebra. For example, resolutions by Gorenstein projective modules give rise to the notion of Gorenstein projective dimension for any modules; see \cite{ABr, EJ}. Recall from \cite{Buc, Hap91} that $A$ is Gorenstein if the regular module $A$ has finite injective dimension on both sides. For example, algebras with finite global dimension and self-injective algebras are Gorenstein. Note that $A$ is Gorenstein if and only if each $A$-module has finite Gorenstein projective dimension; in other words, Gorenstein algebras in Gorenstein homological algebra play a similar role as algebras of finite global dimension in classical homological algebra. For Gorenstein algebras, Gorenstein projective modules behave quite nicely.

 An algebra $A$ is called CM-free \cite{Ch11} provided that any Gorenstein projective module is  projective. This implies that Gorenstein homological algebra for $A$ is quite boring. For example, algebras with finite global dimension are CM-free; indeed, an algebra has finite global dimension if and only if it is Gorenstein and CM-free. We are interested in non-Gorenstein CM-free algebras, or equivalently, CM-free algebras with infinite global dimension.

We have the following obvious trichotomy from the point view of Gorenstein homological algebra: any algebra $A$ is either Gorenstein, or non-Gorenstein CM-free, or non-Gorenstein but not CM-free. We are interested in classifying algebras according to this trichotomy. Recall from \cite{Ch11} that a connected algebra with radical square zero is either Gorenstein or non-Gorenstein CM-free; compare \cite{RX}.

In this paper, we show that left retractions of algebras are related to these Gorenstein properties; see Proposition \ref{prop:Gproj} and Theorem \ref{thm:1}. Here, we follow \cite{GL1991, CK} to associate a left retraction of algebras  to a localizable module; left retractions are homological ring epimorphisms and preserve singularity categories. We mention that related results are obtained in \cite{Nag} via a completely different method. We apply there results to Nakayama algebras, where similar ideas might trace back to \cite{Zac} and \cite{BFVZ}. It turns out that for a  connected Nakayama algebra $A$, there is a connected self-injective Nakayama  algebra $A'$ such that there is a sequence of left retractions linking $A$ to $A'$; indeed, if $A$ has infinite global dimension, such an algebra $A'$ is unique up to isomorphism; see Theorem \ref{thm:2}. In particular,  the singularity category of $A$ is triangle equivalent to the stable category of $A'$; see Corollary \ref{cor:NakaSing}.  Finally, we classify Nakayama algebras with at most three simple modules according to the above trichotomy. It turns out that there exists a class of such algebras, that are non-Gorenstein but not CM-free; see Proposition \ref{prop:classification}. For the classification, we use the notion of regular element for Nakayama algebras introduced in \cite{Gus}.

Throughout, $A$ is an artin algebra over a commutative artinian ring $R$. We denote by $A\mbox{-mod}$ the category of finitely generated left $A$-modules. We denote by $n(A)$ the number of pairwise non-isomorphic simple $A$-modules. For a module $M$, $l(M)$ denotes its composition length. We refer to \cite{ARS} for artin algebras.

\section{Left retractions of algebras}

In this section, we recall left retractions of algebras and their basic properties. We study the behavior of
left retractions with respect to Gorenstein homological properties, such as being Gorenstein algebras or CM-free. We point out
that left retractions induce triangle equivalences between singularity categories.

\subsection{Left retractions of categories}

Let $A$ be an artin algebra. Recall that a simple $A$-module $S$ is called \emph{localizable} provided
that ${\rm proj.dim}_A\; S\leq 1$ and ${\rm Ext}_A^1(S, S)=0$. Here, for each $A$-module $X$, ${\rm proj.dim}_A\; X$ denotes
its projective dimension.

For a localizable $A$-module $S$, we consider the \emph{perpendicular subcategory}
$$S^\perp=\{X\in A\mbox{-mod}\; |\; {\rm Hom}_A(S, X)=0={\rm Ext}_A^1(S, X)\}.$$
It is an \emph{exact abelian} subcategory of $A\mbox{-mod}$, that is, the category $S^\perp$ is abelian and the inclusion functor $i\colon S^\perp \rightarrow A\mbox{-mod}$ is exact. A nontrivial fact is that the
functor $i$ admits a right adjoint $i_\lambda$ which is exact. For details, see \cite[Proposition 3.2]{GL1991}.

 The inclusion functor $i\colon  S^\perp \rightarrow A\mbox{-mod}$ is called a \emph{left expansion} of categories, while the right adjoint $i_\lambda\colon A\mbox{-mod}\rightarrow S^\perp$ is called a \emph{left retraction}; see \cite{CK}.

Denote by ${\rm add}\; S$ the full subcategory of $A\mbox{-mod}$ consisting of  finite direct sums of the localizable module $S$. It is a \emph{Serre subcategory}, that is, it is closed under extensions, submodules and quotient
modules. Consider the quotient abelian category $A\mbox{-mod}/{{\rm add}\; S}$ in the sense of Gabriel \cite{G}. Then the functor $i_\lambda$ induces an equivalence $A\mbox{-mod}/{{\rm add}\; S}\simeq S^\perp$, in other words, the functor $i_\lambda$ identifies with
the quotient functor $q\colon A\mbox{-mod}\rightarrow A\mbox{-mod}/{{\rm add}\; S}$; see \cite[Lemma 3.1.2]{CK}.

 Set $\Delta={\rm End}_A(S)^{\rm op}$ to be the opposite algebra of the endomorphism algebra of $S$; it is a division algebra.  There is an equivalence ${\rm Hom}_A(S, -)\colon {\rm add}\; S \stackrel{\sim}\longrightarrow \Delta\mbox{-mod}$ of categories.

Let us recall from \cite[Section 3]{GL1991} the construction of the functor $i_\lambda\colon A\mbox{-mod}\rightarrow S^\perp$. For an $A$-module $X$, we take an exact sequence $0\rightarrow X\rightarrow X'\rightarrow S^{\oplus m_1} \rightarrow 0$ such that
${\rm Ext}_A^1(S, X')=0$ and $m_1={\rm dim}_\Delta\;  {\rm Ext}_A^1(S, X)$. Then we take an exact sequence $0\rightarrow S^{\oplus m_2} \rightarrow X'\rightarrow X''\rightarrow 0$ such that  ${\rm Hom}_A(S, S^{\oplus m_2})\rightarrow {\rm Hom}_A(S, X')$ is an isomorphism. It follows that $X''$ lies in $S^\perp$. The composite $X\rightarrow X'\rightarrow X''$ is the universal morphism of $X$ to $S^\perp$, that is, $i_\lambda (X)=X''$. We conclude with an exact sequence
\begin{align}\label{equ:1}
0\longrightarrow t(X) \longrightarrow X\stackrel{\eta_X} \longrightarrow i i_\lambda(X) \longrightarrow t'(X)\longrightarrow 0.
\end{align}
Here, both $t(X)$ and $t'(X)$ lie in ${\rm add}\; S$. Moreover, this yields a functor $t \colon A\mbox{-mod}\rightarrow {\rm add}\; S$, which is right adjoint to the inclusion functor ${\rm inc}\colon {\rm add}\; S \rightarrow A\mbox{-mod}$.

We observe that $\eta$ is the unit of the adjoint pair $(i_\lambda, i)$. Consider the endofunctor $L=ii_\lambda\colon A\mbox{-mod}\rightarrow A\mbox{-mod}$. Since the functor $i$ is fully faithful, the counit $i_\lambda  i\rightarrow {\rm Id}_{S^\perp}$ is an isomorphism. It follows that for each $A$-module $X$, $L(\eta_X)=\eta_{L(X)}\colon L(X)\rightarrow L^2(X)$ is an isomorphism. In other words, the pair $(L, \eta\colon {\rm Id}_{A\mbox{-}{\rm mod}}\rightarrow L)$ is a \emph{localization functor }in the sense of \cite[Section 3]{BIK}.

In summary, for a localizable $A$-module $S$, we have the following diagram of functors

\begin{equation}\label{eq:rightrec}
\xymatrixrowsep{3pc} \xymatrixcolsep{1.5pc}\xymatrix{
  \Delta\mbox{-mod} \simeq {\rm add}\; S \;\ar[rr]^-{{\rm inc}}&&\;  A\mbox{-mod} \; \ar[rr]^-{i_\lambda}
  \ar@<1.2ex>[ll]^-{t}&&
  \;S^\perp. \ar@<1.2ex>[ll]^-{i}
}\end{equation}

\subsection{Left retractions of algebras}

Let $(L, \eta)$ be the localization functor associated to a localizable module $S$. Consider the $A$-module $L(A)$. Recall the isomorphism $L(\eta_A)=\eta_{L(A)}\colon L(A)\rightarrow L^2(A)$.  For each element $x\in L(A)$, denote by $r_x\colon A\rightarrow L(A)$ the morphism given by $r_x(a)=a.x$; here, the dot ``." denotes the $A$-module action. For $x, y\in L(A)$, we define $x\star y=L(\eta_A)^{-1}(L(r_y)(x))\in L(A)$. This gives rise to an algebra structure on $L(A)$. Observe that
$\eta_A(a)\star y=a.y$ for $a\in A$. In particular, $\eta_A\colon A\rightarrow L(A)$ is a homomorphism of algebras and
the left $A$-module structure on $L(A)$ coincides with the one induced from $\eta_A$.

We call the algebra homomorphism $\eta_A\colon A\rightarrow L(A)$  the \emph{left retraction} associated to the localizable module $S$.

We observe that the left retraction $\eta_A$ is surjective if and only if $t'(A)=0$, or equivalently, ${\rm Ext}_A^1(S, A)=0$. This happens if the simple module $S$ is projective, in which case the algebra $A$ is Morita equivalent to  a one-point (co-)extension of $L(A)$; compare \cite[III. 2]{ARS}.

The following result is well known; compare \cite[Corollary 3.9]{GL1991}.

\begin{lem}\label{lem:quotient} Keep the notation as above. Then we have the following statements:
\begin{enumerate}
\item there is an algebra isomorphism $\Phi\colon L(A)\stackrel{\sim}\longrightarrow {\rm End}_{S^\perp}(i_\lambda(A))^{\rm op} $ such that $i(\Phi(x))=L(\eta_A)^{-1}\circ L(r_x)$;
\item there is an equivalence $S^\perp \simeq L(A)\mbox{-{\rm mod}}$ of categories; moreover, the functor $i$ identifies with
${\rm Hom}_{L(A)}(L(A), -)$, and $i_\lambda$ with $L(A)\otimes_A-$.
\end{enumerate}
\end{lem}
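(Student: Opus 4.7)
The plan is to prove (1) by identifying $\Phi$ as the inverse of a natural adjunction bijection, and then to deduce (2) from (1) via Morita theory after checking that $i_\lambda(A)$ is a projective generator of $S^\perp$.

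For (1), the adjunction $(i_\lambda, i)$ together with evaluation at $1_A\in A$ supplies an $R$-module isomorphism
\[
\mathrm{End}_{S^\perp}(i_\lambda(A)) \;\xrightarrow{\sim}\; \mathrm{Hom}_A(A, L(A)) \;\xrightarrow{\sim}\; L(A), \qquad f \;\longmapsto\; i(f)\circ \eta_A \;\longmapsto\; i(f)(\eta_A(1_A)).
\]
Using naturality of $\eta$ applied to $r_x\colon A\to L(A)$, one has $L(r_x)\circ \eta_A = \eta_{L(A)}\circ r_x$; combined with the identity $L(\eta_A)=\eta_{L(A)}$ recorded earlier in the excerpt, this yields $(L(\eta_A)^{-1}\circ L(r_x))\circ \eta_A = r_x$. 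Hence the candidate $\Phi(x)$ specified by $i(\Phi(x)) = L(\eta_A)^{-1}\circ L(r_x)$ is sent to $x$ under the above bijection, so $\Phi$ is the inverse and in particular an $R$-module isomorphism. That $\Phi$ respects multiplication when the target carries the opposite composition reduces, after applying $i$ and precomposing with $\eta_A$, to the equality $L(\eta_A)^{-1}\circ L(r_y)\circ r_x = r_{x\star y}$ of morphisms $A\to L(A)$, which is immediate from $x\star y = L(\eta_A)^{-1}(L(r_y)(x))$ and the $A$-linearity of $L(\eta_A)^{-1}\circ L(r_y)$.

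For (2), I first show that $i_\lambda(A)$ is a projective generator of $S^\perp$. Projectivity follows since $i_\lambda$ is left adjoint to the exact functor $i$ and so preserves projectives, and $A$ is projective in $A\mbox{-mod}$. For the generator property, any nonzero morphism $f\colon Y\to Z$ in $S^\perp$ remains nonzero in $A\mbox{-mod}$, so there exists $y\in iY$ with $if(y)\neq 0$; under the adjunction $y$ corresponds to a morphism $i_\lambda(A)\to Y$ composing nontrivially with $f$. Combined with (1), the standard Morita argument then gives an equivalence $\mathrm{Hom}_{S^\perp}(i_\lambda(A), -)\colon S^\perp \simeq L(A)\mbox{-mod}$. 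To identify the functors, observe that for $Y\in S^\perp$ the $R$-module $\mathrm{Hom}_{S^\perp}(i_\lambda(A), Y)$ is canonically isomorphic to $iY$ via the adjunction, and the induced $L(A)$-action on it restricts along $\eta_A$ to the original $A$-module structure on $iY$ (using $\eta_A(a)\star y = a\cdot y$). Hence $i$ is identified with restriction of scalars $\mathrm{Hom}_{L(A)}(L(A), -)$, and by uniqueness of left adjoints $i_\lambda$ is identified with the corresponding extension of scalars $L(A)\otimes_A -$.

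The main obstacle is the conventional bookkeeping in part (1): the opposite-algebra convention, the natural transformation $\eta$ together with its image $L(\eta_A)$, and the asymmetric formula $x\star y = L(\eta_A)^{-1}(L(r_y)(x))$ make it easy to swap left and right structures. The technical crux is the identity $L(\eta_A)=\eta_{L(A)}$ coming from $L$ being a localization functor; once this is in hand, all relevant diagrams commute by naturality alone, and the passage to (2) is a routine Morita-theoretic packaging.
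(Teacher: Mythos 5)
Your proof is correct and follows essentially the same route as the paper: for (1) you use the adjunction isomorphism $L(A)\simeq \mathrm{Hom}_A(A, ii_\lambda(A))\simeq \mathrm{End}_{S^\perp}(i_\lambda(A))$ and verify the algebra structure (fleshing out the paper's ``one checks directly''); for (2) you show $i_\lambda(A)$ is a projective generator and invoke Morita theory. The only real divergence is in establishing the generator property: the paper identifies $i_\lambda$ with the quotient functor $q\colon A\mbox{-mod}\to A\mbox{-mod}/{\rm add}\,S$ and notes that every object of $S^\perp$ is then a quotient of a finite sum of copies of $i_\lambda(A)=q(A)$, since every $A$-module is a quotient of a free module and $q$ is exact; you instead argue that $\mathrm{Hom}_{S^\perp}(i_\lambda(A),-)$ is faithful (via the fully faithful inclusion $i$ and the adjunction), which for a finitely generated projective object over an artin algebra is equivalent to being a generator. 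Both arguments work; the paper's is marginally more self-contained since it avoids appealing to the faithful-plus-projective-implies-generator characterization, while yours avoids the explicit identification with the quotient functor.
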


The fully-faithfulness of the functor $i\simeq {\rm Hom}_{L(A)}(L(A), -)\colon L(A)\mbox{-mod}\rightarrow A\mbox{-mod}$ implies
that $\eta_A\colon A\rightarrow L(A)$ is a ring epimorphism. The exactness of the functor $i_\lambda\simeq L(A)\otimes_A-$
implies that the right $A$-module $L(A)$ is flat and then projective. In particular,  the left retraction $\eta_A\colon A\rightarrow L(A)$  is a \emph{left localization} in the sense of \cite{Sil}. It follows that $\eta_A$ is a homological ring epimorphism; see \cite[Corollary  4.7]{GL1991}. We observe from the exact sequence (\ref{equ:1}) that ${\rm proj.dim}_A\; L(A)\leq 2$.

We mention that there exists an idempotent $e$ in $A$ such that ${\rm add}\; S$ is the kernel of the \emph{Schur functor} $S_e=eA\otimes_A-\colon A\mbox{-mod}\rightarrow eAe\mbox{-mod}$. Then $S_e$ induces an equivalence $A\mbox{-mod}/{{\rm add}\; S}\simeq eAe\mbox{-mod}$, which is further equivalent to $S^\perp$. Then Lemma \ref{lem:quotient}(2) implies that $L(A)$ and $eAe$ are Morita equivalent.

From the above discussion, we may identify the Schur functor $S_e$ with the quotient functor $q$, and thus with $i_\lambda$. In particular, the right adjoint of $S_e$, that is ${\rm Hom}_{eAe}(eA, -)$, is exact. Hence, the left $eAe$-module $eA$ is projective.

\begin{proof}
For (1), we observe that $L(A)\simeq {\rm Hom}_A(A, ii_\lambda(A))\simeq {\rm Hom}_{S^{\perp}}(i_\lambda(A), i_\lambda(A))$, where the second isomorphism is by the adjoint pair $(i_\lambda, i)$.  The composition is  $\Phi$; one checks directly that it gives a homomorphism of algebras.

For (2), we observe that $i_\lambda(A)$ is projective in $S^\perp$, since $i$ is exact; see \cite[Proposition 2.3.10]{Wei}.
Identifying $i_\lambda$ with the quotient functor $q\colon A\mbox{-mod}\rightarrow A\mbox{-mod}/{{\rm add}\; S}$, we infer that each object
in $S^\perp$ is a quotient of finite direct sums of copies of $i_\lambda(A)$. Hence, $i_\lambda(A)$ is a projective generator of $S^\perp$. Then using (1), we deduce the equivalence of categories.
\end{proof}

\subsection{Simple modules} Let $n(A)=n$ be the number of pairwise non-isomorphic simple $A$-modules, and let $\{S_1, S_2, \cdots, S_{n-1}, S_n=S\}$ be a complete set of representations of pairwise non-isomorphic simple $A$-modules. Set $\Delta_j={\rm End}_A(S_j)^{\rm op}$; they are division algebras.

We assume that $S_n=S$ is localizable. Let $P_j=P(S_j)$ and $I_j=I(S_j)$ be the projective cover  and the injective envelop of $S_j$, respectively. Since $S_n$ is localizable,
its minimal projective presentation takes the following form
\begin{align}\label{equ:S}
0\longrightarrow \bigoplus_{j=1}^{n-1} P_j^{\oplus d_j} \longrightarrow P_n\longrightarrow S_n\longrightarrow 0.
\end{align}
Here, each $d_j\geq 0$.

Recall that the \emph{Cartan matrix} $C_A=(c_{jk})$ of $A$ is an $n\times n$ matrix such that $c_{jk}$
is the multiplicity of $S_j$ in a composition series of $P_k$. Consider the homomorphism $C_A\colon \mathbb{Z}^n\rightarrow \mathbb{Z}^n$ between free abelian groups induced by multiplication of $C_A$ from the left; we view elements in $\mathbb{Z}^n$ as  column vectors. Then ${\rm Cok}\; C_A$ is a finitely generated abelian group such that ${\rm rk} ({\rm Cok}\; C_A)=n-{\rm rank}\; C_A$, where we denote by ``rk" the rank of an abelian group.

\begin{prop}\label{prop:simple} Consider the left retraction $\eta_A\colon A\rightarrow L(A)$ associated to $S_n=S$. Identify $S^\perp$ with
$L(A)\mbox{-{\rm mod}}$. Then we have the following statements:
\begin{enumerate}
\item the set $\{i_\lambda(S_1), i_\lambda(S_2), \cdots, i_\lambda(S_{n-1})\}$ is a complete set of representatives of pairwise non-isomorphic simple $L(A)$-modules;
\item there is an isomorphism $\Delta_j\simeq {\rm End}_{L(A)}(i_\lambda(S_j))^{\rm op}$ induced by $i_\lambda$ for each $1\leq j\leq n-1$;
\item the $L(A)$-module $i_\lambda(P_j)$ is the projective cover of $i_\lambda(S_j)$ for $1\leq j\leq n-1$;
\item the $L(A)$-module $i_\lambda(I_j)$ is the injective envelop of $i_\lambda(S_j)$ for $1\leq j\leq n-1$;
\item the Cartan matrix $C_{L(A)}$ of $L(A)$ is the minor of $C_A$ by deleting the $n$-th row and $n$-th column; consequently, we have that ${\rm det}\; C_{L(A)}={\rm det} \; C_A$, ${\rm rank}\; C_{L(A)}={\rm rank}\; C_{A}-1$ and an isomorphism $ {\rm Cok}\; C_{L(A)}\simeq {\rm Cok}\; C_A$ of abelian groups.
\end{enumerate}
\end{prop}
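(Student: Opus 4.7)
The plan is to treat the five items in order, exploiting the identification of $i_\lambda$ with the Serre quotient functor by ${\rm add}\; S$, the exactness of $i_\lambda$, and the four-term sequence (\ref{equ:1}). Part (1) follows from the general fact that the nonzero images under a Serre quotient of the simple objects of the ambient category yield a complete set of simples in the quotient, so the $i_\lambda(S_j)$ for $j \neq n$ are exactly the simples of $L(A)\mbox{-mod}$. For (2), since $S_j$ is simple with $S_j \not\simeq S$ for $j \neq n$, the submodule $t(S_j)\subseteq S_j$ (lying in ${\rm add}\; S$) must vanish, so (\ref{equ:1}) at $X=S_j$ reduces to $0 \to S_j \to ii_\lambda(S_j) \to t'(S_j)\to 0$; applying ${\rm Hom}_A(S_j,-)$ and noting ${\rm Hom}_A(S_j, t'(S_j))=0$, the adjunction $(i_\lambda, i)$ identifies ${\rm End}_A(S_j)$ with ${\rm End}_{L(A)}(i_\lambda(S_j))$, giving (2).

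For (3), I first note that $i_\lambda(P_j)$ is projective in $L(A)\mbox{-mod}$: indeed, $P_j$ is a summand of $A$ and $i_\lambda(A)\simeq L(A)$ by Lemma \ref{lem:quotient}(2). To identify the top, compute via the adjunction ${\rm Hom}_{L(A)}(i_\lambda(P_j), i_\lambda(S_k)) \simeq {\rm Hom}_A(P_j, ii_\lambda(S_k))$, and use the short exact sequence $0 \to S_k \to ii_\lambda(S_k) \to t'(S_k) \to 0$ (with $t(S_k)=0$): applying the exact functor ${\rm Hom}_A(P_j,-)$ and noting ${\rm Hom}_A(P_j, S) = 0$ for $j \neq n$ (as $P_j$ has top $S_j \not\simeq S$) yields ${\rm Hom}_A(P_j, ii_\lambda(S_k)) \simeq {\rm Hom}_A(P_j, S_k)$, which is $\Delta_j^{\rm op}$ for $k=j$ and zero otherwise. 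Hence the top of $i_\lambda(P_j)$ is the simple module $i_\lambda(S_j)$, and $i_\lambda(P_j)$ is the projective cover.

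For (4), $i_\lambda$ preserves injectives, being right adjoint to the exact functor $i$, so $i_\lambda(I_j)$ is injective in $L(A)\mbox{-mod}$. The key simplification is $t(I_j)=0$ for $j\neq n$: since ${\rm soc}\; I_j = S_j \not\simeq S$, one has ${\rm Hom}_A(S, I_j)=0$, and (\ref{equ:1}) collapses to $0 \to I_j \to ii_\lambda(I_j) \to t'(I_j)\to 0$. To pin down the socle of $i_\lambda(I_j)$, I compute ${\rm Hom}_{L(A)}(i_\lambda(S_k), i_\lambda(I_j)) \simeq {\rm Hom}_A(S_k, ii_\lambda(I_j))$ for $k\neq n$ by the adjunction: any such map factors through $I_j$ (since $t'(I_j)\in {\rm add}\;S$ and $S_k\not\simeq S$), and any nonzero map $S_k\to I_j$ lands in ${\rm soc}\;I_j = S_j$. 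This yields $\Delta_j^{\rm op}$ when $k=j$ and zero otherwise, so the socle of $i_\lambda(I_j)$ is $i_\lambda(S_j)$, making $i_\lambda(I_j)$ the injective envelope.

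Part (5) is linear algebra. Exactness of $i_\lambda$, together with $i_\lambda(S_n)=0$ and (3), shows that applying $i_\lambda$ to a composition series of $P_k$ ($k<n$) gives a composition series of $i_\lambda(P_k)$ with the same multiplicities of $i_\lambda(S_j)$ for $j<n$, so $c_{jk}^{L(A)} = c_{jk}$. For the remaining identities, the minimal presentation (\ref{equ:S}) yields $[P_n] = [S_n] + \sum_{j<n} d_j [P_j]$ in the Grothendieck group, so subtracting $d_j$ times the $j$-th column from the $n$-th column of $C_A$ (a unimodular column operation) turns the $n$-th column into the standard basis vector $e_n$; subtracting appropriate multiples of this new column from the others then clears the bottom-left to produce the block-diagonal form $C_{L(A)}\oplus(1)$, from which the determinant, rank, and cokernel identities are immediate (unimodular column operations preserve all three). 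The main obstacles, I anticipate, lie in (3) and (4), both resting on the key vanishings $t(S_j)=0$ and $t(I_j)=0$ for $j \neq n$, which are special consequences of the localizability of $S$; once these are in hand, the top and socle computations via the adjunction become routine, and (5) is a direct linear-algebraic consequence.
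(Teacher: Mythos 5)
Your proof of (1), (2), and (5) is essentially the paper's, and your proof of (3) is a correct alternative: where the paper lifts the unique maximal submodule $R_j$ of $P_j$ along the Serre quotient (using that every subobject of $q(X)$ comes from a subobject of $X$), you instead pin down the top of $i_\lambda(P_j)$ by adjunction and the exact sequence (\ref{equ:1}) for $S_k$. Both routes work; the paper's is slightly more structural, yours is a direct Hom-computation.

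Part (4), however, has a genuine gap. You assert that $i_\lambda$ preserves injectives because it is ``right adjoint to the exact functor $i$'', but $i_\lambda$ is the \emph{left} adjoint of $i$ (this is why $i_\lambda$ preserves projectives and identifies with $L(A)\otimes_A -$; the right adjoint of $i$ is $i_\rho={\rm Hom}_A(L(A),-)$). The paper's opening ``admits a right adjoint $i_\lambda$'' is a slip, corrected by everything that follows: $\eta\colon {\rm Id}\to ii_\lambda$ is a unit and $i_\lambda\simeq L(A)\otimes_A -$, so $i_\lambda\dashv i$. Moreover, $i_\lambda$ genuinely does \emph{not} preserve injectives: for example with $\mathbf{c}(A)=(2,3,3)$ one computes $I_3=S_2^{[2]}$ and $i_\lambda(I_3)$ is a simple $L(A)$-module with $\mathbf{c}(L(A))=(2,2)$, hence not injective. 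What saves the statement for $j\neq n$ is that $I_j$ actually lies in $S^\perp$: you observed ${\rm Hom}_A(S,I_j)=0$ (so $t(I_j)=0$), but you also need ${\rm Ext}^1_A(S,I_j)=0$, which holds because $I_j$ is injective; this gives $t'(I_j)=0$ as well, so $\eta_{I_j}\colon I_j\to ii_\lambda(I_j)$ is an isomorphism. Since $S^\perp$ is a full exact abelian subcategory of $A\mbox{-mod}$, an $A$-injective object lying in $S^\perp$ is injective there, and your socle computation then finishes the argument. This is exactly the paper's route, which you should adopt here in place of the faulty adjunction claim.
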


\begin{proof}
We identify $i_\lambda$ with the quotient functor  $q\colon A\mbox{-mod}\rightarrow A\mbox{-mod}/{{\rm add}\; S}$. Observe that each $L(A)$-module has a submodule series with factors $i_\lambda(S_j)$ for $1\leq j\leq n-1$. Then
(1) and (2) follow from a general fact: given  any abelian category $\mathcal{A}$, a Serre subcategory $\mathcal{C}$ and two
simple objects $S, S'$ of $\mathcal{A}$ that are not in $\mathcal{C}$, the quotient functor $q\colon \mathcal{A}\rightarrow \mathcal{A}/\mathcal{C}$
sends $S$ and $S'$ to simple objects, and induces an isomorphism ${\rm Hom}_\mathcal{A}(S, S')\rightarrow {\rm Hom}_{\mathcal{A}/\mathcal{C}}(q(S),q(S'))$.

For (3), we recall that each $P_j$ has a unique maximal submodule $R_j$ such that $P_j/R_j\simeq S_j$. It follows that $i_\lambda(R_j)$ is the unique maximal submodule of $i_\lambda(P_j)$. Here, we use another general fact: for an object $X$ in $\mathcal{A}$, each subobject
of $q(X)$ is induced by a subobject of $X$.  Then (3) follows from the facts that  $i_\lambda (P_j)$ is projective and that $i_\lambda(P_j)/i_\lambda(R_j)\simeq  i_\lambda(S_j)$.

For (4), we observe that each $I_j$ lies in $S^\perp$, and it is injective and indecomposable in $S^\perp$. We identify $i_\lambda(I_j)$ with $I_j$. Consider the embedding $S_j\rightarrow I_j$. We infer that the induced embedding $i_\lambda(S_j)\rightarrow i_\lambda(I_j)$ is an injective hull.

For (5), we observe that the multiplicity of $i_\lambda(S_j)$ in a composition series of $i_\lambda(P_k)$ is the same of the one of
$S_j$ in a composition series of $P_k$. For the equality of determinants, we observe from (\ref{equ:S}) that the last column of
$C_A$ is the sum of $e_n=(0, 0, \cdots, 0, 1)^t$ with a linear combination of the $j$-th rows for $1\leq j \leq n-1$; here, ``t" denotes the transpose. Then all the statements follow immediately. \end{proof}

Recall the \emph{valued quiver} $Q_A$ of an algebra $A$. Let $\{S_1, S_2, \cdots, S_{n-1}, S_n\}$  and  $\{\Delta_1, \Delta_2, \cdots, \Delta_{n-1}, \Delta_{n}\}$ be as above. Observe that ${\rm Ext}_A^1(S_j, S_k)$ has a natural $\Delta_j$-$\Delta_k$-bimodule structure.  The quiver $Q_A$ has the vertex set $\{S_1, S_2, \cdots, S_{n-1}, S_n\}$, and there is an arrow from $S_j$ to $S_k$ whenever ${\rm Ext}_A^1(S_j, S_k)\neq 0$; this arrow is endowed with a valuation $({\rm dim}_{{\Delta_k}^{\rm op}}\; {\rm Ext}_A^1(S_j, S_k), {\rm dim}_{\Delta_j}\; {\rm Ext}_A^1(S_j, S_k))$. The valuation of $Q_A$ is \emph{trivial} if all the valuations of arrows are $(1, 1)$.

Consider the left localization $\eta_A\colon A\rightarrow L(A)$. By Proposition \ref{prop:simple} the vertex set of
the valued quiver $Q_{L(A)}$ of $L(A)$ is obtained by deleting $S_n$ from the one of $Q_A$. The following result implies the quiver
 obtained from $Q_A$ by deleting $S_n$ is \emph{smaller} than $Q_{L(A)}$ in the sense of \cite[p.244]{ARS}; their difference
only appears in the neighborhood of $S_n$.

 \begin{lem}\label{lem:quiver}
 For $1\leq j, k\leq n-1$, there is an exact sequence
 \begin{align*}
 0\longrightarrow {\rm Ext}_A^1(S_j, S_k)\stackrel{\xi}\longrightarrow {\rm Ext}_{L(A)}^1 (i_\lambda(S_j), i_\lambda(S_k)) \longrightarrow
 {\rm Ext}_A^1(S_j, t'(S_k))
 \end{align*}
 where the module $t'(S_k)$ is defined in (\ref{equ:1}), and $\xi$ is induced by $i_\lambda$.

  In particular, the map $\xi$ is an isomorphism provided that in $Q_A$, there is no arrow from $S_j$ to $S_n$, or no arrow from $S_n$ to $S_k$.
 \end{lem}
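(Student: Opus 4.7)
The plan is to derive the claimed exact sequence from two long Ext sequences combined with the fact (recalled just before the lemma) that $\eta_A\colon A\to L(A)$ is a homological ring epimorphism. The starting observation is that the latter gives a canonical isomorphism ${\rm Ext}^1_{L(A)}(i_\lambda(S_j),i_\lambda(S_k))\simeq {\rm Ext}^1_A(ii_\lambda(S_j),ii_\lambda(S_k))$, so the task reduces to comparing the group on the right with ${\rm Ext}^1_A(S_j,S_k)$ and ${\rm Ext}^1_A(S_j,t'(S_k))$.

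To do this, I would feed in the two short exact sequences that (\ref{equ:1}) specializes to for $X=S_j$ and $X=S_k$: since each $S_l$ is simple and not isomorphic to $S$, the term $t(S_l)$ vanishes, so one obtains $0\to S_l\to ii_\lambda(S_l)\to t'(S_l)\to 0$ with $t'(S_l)\in {\rm add}\; S$. First, applying ${\rm Hom}_A(-,ii_\lambda(S_k))$ to the sequence for $S_j$ and using that $ii_\lambda(S_k)\in S^\perp$ (which kills ${\rm Hom}_A(S,-)$ and ${\rm Ext}^1_A(S,-)$) together with ${\rm proj.dim}_A\; S\leq 1$ (which kills ${\rm Ext}^2_A(S,-)$), the long exact sequence collapses to an isomorphism ${\rm Ext}^1_A(ii_\lambda(S_j),ii_\lambda(S_k))\simeq {\rm Ext}^1_A(S_j,ii_\lambda(S_k))$. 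Second, applying ${\rm Hom}_A(S_j,-)$ to the sequence for $S_k$, and using ${\rm Hom}_A(S_j,t'(S_k))=0$ (since $t'(S_k)\in{\rm add}\; S$ and $S_j\not\simeq S$), I obtain the exact sequence $0\to {\rm Ext}^1_A(S_j,S_k)\to {\rm Ext}^1_A(S_j,ii_\lambda(S_k))\to {\rm Ext}^1_A(S_j,t'(S_k))$. Concatenating these three identifications yields the required sequence, and a naturality check confirms that the resulting first map agrees with the one induced by the exact functor $i_\lambda$.

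For the ``in particular'' assertion, I would inspect the construction of $i_\lambda$ to see that $t'(S_k)=S^{\oplus m_1}$ with $m_1={\rm dim}_\Delta\; {\rm Ext}^1_A(S,S_k)$. Hence if $Q_A$ has no arrow from $S_n=S$ to $S_k$, then $m_1=0$ and $t'(S_k)=0$; if $Q_A$ has no arrow from $S_j$ to $S_n$, then ${\rm Ext}^1_A(S_j,S)=0$ and so ${\rm Ext}^1_A(S_j,t'(S_k))=0$. In either case the third term of the sequence vanishes and $\xi$ becomes an isomorphism. The main obstacle is purely bookkeeping: no step is conceptually deep, but one must carefully track which vanishings come from membership in $S^\perp$ versus from the projective dimension bound on $S$, and verify the naturality identification of $\xi$.
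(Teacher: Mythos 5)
Your proof is correct, and it reaches the same key reduction as the paper but by a slightly different route. The paper invokes the general fact that for the adjoint pair $(i_\lambda, i)$ of exact functors (with $i$ fully faithful) one has a direct isomorphism ${\rm Ext}^1_{L(A)}(i_\lambda(S_j), i_\lambda(S_k)) \simeq {\rm Ext}^1_A(S_j, ii_\lambda(S_k))$ (citing \cite[Lemma 2.3.1]{CK}), and then finishes exactly as you do by applying ${\rm Hom}_A(S_j,-)$ to the sequence $0\to S_k\to ii_\lambda(S_k)\to t'(S_k)\to 0$. You instead reach the same intermediate group in two steps: first the homological-epimorphism isomorphism ${\rm Ext}^1_{L(A)}(i_\lambda(S_j), i_\lambda(S_k)) \simeq {\rm Ext}^1_A(ii_\lambda(S_j), ii_\lambda(S_k))$, then a second long exact sequence in the contravariant variable obtained from $0\to S_j\to ii_\lambda(S_j)\to t'(S_j)\to 0$, where the needed vanishings come from $ii_\lambda(S_k)\in S^\perp$ (degrees $0,1$) and ${\rm proj.dim}_A\,S\leq 1$ (degree $2$). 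This is marginally longer but unpacks what is hidden inside the adjoint-pair lemma, which is pedagogically useful; the paper's route is shorter at the cost of a reference. The bookkeeping you carry out at the end, both the observation $t'(S_k)=S^{\oplus m_1}$ with $m_1={\rm dim}_\Delta\,{\rm Ext}^1_A(S,S_k)$ and the case distinction giving the vanishing of ${\rm Ext}^1_A(S_j,t'(S_k))$, is correct and is in fact a bit more explicit than the paper, which only comments on the case ``no arrow from $S_n$ to $S_k$'' (i.e., $t'(S_k)=0$) and leaves the case ``no arrow from $S_j$ to $S_n$'' to the reader.
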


\begin{proof}
Recall $S_n=S$ and that in the adjoint pair $(i_\lambda, i)$ both functors are exact. Then we have $ {\rm Ext}_{L(A)}^1 (i_\lambda(S_j), i_\lambda(S_k))\simeq {\rm Ext}_A^1(S_j, ii_\lambda (S_k))$; see \cite[Lemma 2.3.1]{CK}. The exact sequence (\ref{equ:1}) for $S_k$ takes
the form $0\rightarrow S_k\rightarrow ii_\lambda(S_k)\rightarrow t'(S_k)\rightarrow 0$, since $t(S_k)=0$. Applying the
functor ${\rm Hom}_A(S_j, -)$ to this sequence, we are done. Here, one might notice that ${\rm Hom}_A(S_j, t'(S_k))=0$, since
$t'(S_k)\in  {\rm add}\; S$. For the last statement, we note that $t'(S_k)=0$ if and only if ${\rm Ext}_A^1(S, S_k)=0$.
\end{proof}

\subsection{Homological properties} For an artin  algebra $A$, we denote by ${\rm gl.dim}\; A$ its global dimension.
Recall its \emph{finitistic dimension} ${\rm fin.dim}\; A={\rm sup}\{{\rm proj.dim}_A\; X\; |\; X\in A\mbox{-mod} \mbox{ with } {\rm proj.dim}_A\; X<\infty \}$. For an algebra $A$ with finite global dimension, we have ${\rm gl.dim}\; A={\rm fin.dim}\; A$. We have the following result; compare \cite[Lemma 4]{BFVZ}.

\begin{lem}\label{lem:homo}
Let $\eta_A\colon A\rightarrow L(A)$ be the left retraction associated to $S_n=S$. Identify $S^\perp$ with
$L(A)\mbox{-{\rm mod}}$. Let $X\in A\mbox{-{\rm mod}}$. Then the following statements hold:
\begin{enumerate}
\item ${\rm proj.dim}_{L(A)} \; i_\lambda(X) \leq {\rm proj.dim}_A\; X\leq {\rm proj.dim}_{L(A)} \; i_\lambda(X)+2$;
\item ${\rm gl.dim}\; L(A)\leq {\rm gl.dim}\; A\leq {\rm gl.dim}\; L(A)+2$;
\item ${\rm fin.dim}\; L(A)\leq {\rm fin.dim}\; A\leq {\rm fin.dim}\; L(A)+2$.
\end{enumerate}
\end{lem}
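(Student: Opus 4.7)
The plan is to reduce (2) and (3) to the key statement (1) by taking suprema, and to attack (1) using the two essential inputs developed after Lemma \ref{lem:quotient}: the right $A$-module $L(A)$ is flat, so $i_\lambda\simeq L(A)\otimes_A-$ is exact, and ${\rm proj.dim}_A\; L(A)\leq 2$. The bridge between resolutions over $A$ and over $L(A)$ is the four-term exact sequence (\ref{equ:1}), whose end terms $t(X),t'(X)\in {\rm add}\;S$ have projective dimension at most $1$ over $A$ since $S$ is localizable.

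For the left inequality in (1), I would apply $i_\lambda$ to an $A$-projective resolution $P_\bullet\to X$. Because $i_\lambda$ is left adjoint to the exact functor $i$, it preserves projectives; because $i_\lambda$ is itself exact, $i_\lambda(P_\bullet)\to i_\lambda(X)$ is a projective resolution over $L(A)$ of length at most ${\rm proj.dim}_A\; X$. For the right inequality, I would split (\ref{equ:1}) as
\begin{equation*}
0\to t(X)\to X\to K\to 0 \quad\text{and}\quad 0\to K\to ii_\lambda(X)\to t'(X)\to 0,
\end{equation*}
and combine them using the standard short-exact-sequence estimates for projective dimension. The nontrivial input is ${\rm proj.dim}_A\; ii_\lambda(X)\leq {\rm proj.dim}_{L(A)}\; i_\lambda(X)+2$: starting from an $L(A)$-projective resolution of $i_\lambda(X)$ of length $n$, each term is a summand of a free $L(A)$-module and so has projective dimension at most $2$ as an $A$-module; a short induction on $n$ (using the syzygy short exact sequence over $L(A)$ and the dimension-shift inequality) then yields an $A$-projective resolution of $ii_\lambda(X)$ of length at most $n+2$.

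Statements (2) and (3) then follow formally. For (2), every $L(A)$-module $Y$ equals $i_\lambda(iY)$ because the counit $i_\lambda i\to {\rm Id}_{S^\perp}$ is an isomorphism, so $\{i_\lambda(X)\mid X\in A\mbox{-mod}\}$ exhausts $L(A)\mbox{-mod}$; taking the supremum of (1) over all $X$ yields the bounds on global dimension. For (3), note that finiteness propagates in both directions thanks to the additive offset of $2$: if ${\rm proj.dim}_{L(A)}\;Y=n<\infty$, then $X=iY$ satisfies ${\rm proj.dim}_A\; X\leq n+2<\infty$ and also ${\rm proj.dim}_A\; X\geq n$ by the left inequality of (1); conversely, ${\rm proj.dim}_A\; X<\infty$ forces ${\rm proj.dim}_{L(A)}\; i_\lambda(X)<\infty$ with the two values differing by at most $2$. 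The main obstacle is the right inequality in (1)---specifically, converting an $L(A)$-resolution of $i_\lambda(X)$ into an $A$-resolution of $ii_\lambda(X)$---which hinges on ${\rm proj.dim}_A\; L(A)\leq 2$, itself read off from (\ref{equ:1}) applied to $X=A$.
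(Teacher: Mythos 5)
Your proposal is correct and follows essentially the same route as the paper's proof: the left inequality via exactness and projective-preservation of $i_\lambda$, the right inequality via ${\rm proj.dim}_A\;L(A)\leq 2$ (hence ${\rm proj.dim}_A\;i(Q)\leq 2$ for projective $Q$ over $L(A)$) combined with the exact sequence (\ref{equ:1}) and ${\rm proj.dim}_A\;S\leq 1$, with (2) and (3) deduced formally from (1). You merely spell out the dimension-shift induction and the finitistic-dimension argument that the paper leaves implicit.
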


\begin{proof}
It suffices to show (1). Since $i_\lambda$ sends projective $A$-modules to projective $L(A)$-modules, the left inequality follows. For the right one, recall that $i(L(A))=L(A)$, viewed as a left $A$-module, satisfies that ${\rm proj.dim}_A\; L(A)\leq 2$. Then for each projective $L(A)$-module $Q$, ${\rm proj.dim}_A\; i(Q)\leq 2$. This implies ${\rm proj.dim}_A\; ii_\lambda (X)\leq {\rm proj.dim}_{L(A)} \; i_\lambda(X)+2$. Using the fact that ${\rm proj.dim}_A\; S\leq 1$,  the result follows from the exact sequence (\ref{equ:1}).
\end{proof}

Let $P^\bullet=\cdots \rightarrow P^{-1}\stackrel{d^{-1}}\rightarrow P^0\stackrel{d^0}\rightarrow P^{1}\stackrel{d^1}\rightarrow  P^2\rightarrow\cdots$ be an unbounded complex of projective $A$-modules. Recall that $Z^1(P^\bullet)={\rm Ker}\; d^1$ is the first cocycle. Following \cite{AM}, the complex $P^\bullet$ is \emph{totally acyclic} provided that it is acyclic and the dual complex $(P^\bullet)^*={\rm Hom}_A(P^\bullet, A)$ is also acyclic. An $A$-module $X$ is \emph{Gorenstein projective} provided that there exists a totally acyclic complex $P^\bullet$ such that $Z^1(P^\bullet)\simeq X$; such a complex $P^\bullet$ is called a \emph{complete resolution} of $X$; see \cite{EJ}.

We denote by $A\mbox{-Gproj}$ the full subcategory consisting of Gorenstein projective $A$-modules. Any projective module $P$ is \emph{Gorenstein projective}, since its complete resolution can be taken as $\cdots \rightarrow 0\rightarrow P\stackrel{{\rm Id}_P}\rightarrow P\rightarrow 0\rightarrow \cdots$. The subcategory $A\mbox{-Gproj}$ is closed under extensions, and thus carries a natural exact structure in the sense of Quillen \cite{Qui73}. As an exact category,  $A\mbox{-Gproj}$ is Frobenius, whose projective-injective objects are precisely projective $A$-modules. By \cite[Theorem I.2.8]{Hap88}, the \emph{stable category  } $A\mbox{-\underline{Gproj}}$ modulo projective modules is a triangulated category: the translation functor is induced by a quasi-inverse of the syzygy functor, and triangles are induced by short exact sequences with terms in $A\mbox{-Gproj}$. If $A$ is self-injective, we have that $A\mbox{-Gproj}=A\mbox{-mod}$, and thus the resulting triangulated category $A\mbox{-\underline{Gproj}}$ coincides with the stable category $A\mbox{-\underline{mod}}$.

The following fact is well known.

\begin{lem} \label{lem:Gproj}
Let $X$ be an indecomposable Gorenstein projective $A$-module which is non-projective. Then there exists an exact sequence
$0\rightarrow X\rightarrow P\rightarrow X'\rightarrow 0$ such that $P$ is projective and $X'$ is indecomposable Gorenstein projective which is non-projective. Moreover, the morphism $P\rightarrow X'$ is a projective cover of $X'$.
\end{lem}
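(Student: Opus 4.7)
The plan is to extract the desired short exact sequence from a complete resolution of $X$, refine it via Krull-Schmidt so that the cokernel has no projective direct summand, and then verify the four required properties one by one using the hypothesis that $X$ is indecomposable and non-projective.

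Concretely, I would first take a complete resolution $P^\bullet$ with $Z^1(P^\bullet)\simeq X$ and read off the short exact sequence $0\to X\to P^1\to Y\to 0$, where $Y\simeq Z^2(P^\bullet)$ is again Gorenstein projective. By Krull-Schmidt for $A\mbox{-mod}$, decompose $Y=Y_0\oplus X'$ with $Y_0$ a maximal projective direct summand, so that $X'$ is Gorenstein projective and has no nonzero projective direct summand. Since $Y_0$ is projective, the composite $P^1\twoheadrightarrow Y\twoheadrightarrow Y_0$ admits a section, giving a decomposition $P^1=P\oplus Y_0$, and restricting yields the short exact sequence $0\to X\to P\to X'\to 0$ that I will work with.

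It then remains to verify the four properties of this sequence. Non-projectivity of $X'$ is immediate, since otherwise the sequence would split and $X$ would be a projective direct summand of $P$, contradicting the hypothesis. For the projective-cover statement, I would suppose $P\to X'$ were not a projective cover; lifting through the actual projective cover $\widetilde{P}\to X'$ produces a splitting $P=\widetilde{P}\oplus P_0$ with $P_0$ a nonzero projective submodule of $X$, and projecting $X$ onto $P_0$ along $\widetilde{P}$ restricts to the identity on $P_0$, showing that $P_0$ is in fact a direct summand of $X$; this contradicts $X$ being indecomposable and non-projective. For indecomposability of $X'$, suppose $X'=X_1\oplus X_2$ with both summands nonzero; because projective covers respect finite direct sums, one gets $P=P_1\oplus P_2$ with $P_i\to X_i$ the projective cover, whence $X=\ker(P_1\to X_1)\oplus \ker(P_2\to X_2)$. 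Indecomposability of $X$ forces one kernel to vanish, so the corresponding $X_i$ is projective, contradicting that $X'$ has no nonzero projective direct summand.

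The main technical care lies in moving between projective submodules and projective direct summands during the projective-cover argument; the key observation, used twice above, is that a projective direct summand of $P$ that happens to sit inside $X$ is automatically a direct summand of $X$, reducing everything to the indecomposability of $X$. Once this is in hand, the remaining assertions follow formally from Krull-Schmidt and the standard behaviour of projective covers over artin algebras.
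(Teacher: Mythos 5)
Your proof is correct and takes essentially the same approach as the paper: extract the short exact sequence $0\to X\to P\to X'\to 0$ from a complete resolution, arrange for $X'$ to have no projective direct summands (the paper does this by taking $P$ of minimal length, you by stripping the maximal projective summand off $Z^2(P^\bullet)$ and splitting $P^1$ accordingly — these are equivalent), and then use the indecomposability and non-projectivity of $X$ to deduce that $P\to X'$ is a projective cover and that $X'$ is indecomposable. You simply spell out the verification steps that the paper leaves terse.
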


\begin{proof}
From the definition, there is an exact sequence $0\rightarrow X\rightarrow P\rightarrow X'\rightarrow 0$ with $P$ projective and $X'$ Gorenstein projective. We may take the sequence such that $P$ has the minimal length.  This implies that $X'$ has no projective direct summands and then the indecomposable module $X$ is the first syzygy of $X'$. So we have that $X'$ is indecomposable and that $P\rightarrow X'$ is a projective cover.
\end{proof}

Recall from \cite{Ch11} that an artin algebra $A$ is \emph{CM-free} provided that each Gorenstein projective $A$-module is projective.

\begin{prop}\label{prop:Gproj}
Let $\eta_A\colon A\rightarrow L(A)$ be the left retraction associated to $S_n=S$. Identify $S^\perp$ with
$L(A)\mbox{-{\rm mod}}$. Then for any $X\in A\mbox{-{\rm Gproj}}$, we have $i_\lambda(X)\in L(A)\mbox{-{\rm Gproj}}$. In particular,
if $L(A)$ is CM-free, so is $A$.
\end{prop}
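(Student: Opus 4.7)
The plan is to check directly that $i_\lambda(X)$ admits a complete resolution over $L(A)$, and then derive CM-freeness of $A$ from that of $L(A)$ by a projective-dimension argument based on the four-term sequence (\ref{equ:1}).

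First I would fix a complete resolution $P^\bullet$ of $X$, so all $P^n$ are projective $A$-modules, $P^\bullet$ is acyclic, the dual $(P^\bullet)^{*}={\rm Hom}_A(P^\bullet,A)$ is acyclic, and $Z^1(P^\bullet)\simeq X$. Applying the exact functor $i_\lambda\simeq L(A)\otimes_A-$ from Lemma \ref{lem:quotient}(2) keeps acyclicity and produces a complex $i_\lambda(P^\bullet)$ of projective $L(A)$-modules with $Z^1(i_\lambda(P^\bullet))\simeq i_\lambda(X)$. It therefore remains to prove that $(i_\lambda(P^\bullet))^{*}={\rm Hom}_{L(A)}(i_\lambda(P^\bullet),L(A))$ is acyclic as well.

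Here I would invoke the adjunction $(i_\lambda,i)$ termwise to identify, as complexes,
\[
{\rm Hom}_{L(A)}(i_\lambda(P^\bullet),L(A)) \;\simeq\; {\rm Hom}_A(P^\bullet,\,i(L(A))).
\]
The right-hand module $i(L(A))$ has projective dimension at most $2$ over $A$, by the remark after Lemma \ref{lem:quotient}. Thus the task reduces to the standard fact that ${\rm Hom}_A(P^\bullet,M)$ is acyclic whenever $P^\bullet$ is a totally acyclic complex of projectives and $M$ has finite projective dimension; this follows by induction on ${\rm proj.dim}_A\,M$, using a short exact sequence $0\to M'\to Q\to M\to 0$ with $Q$ projective, applying ${\rm Hom}_A(P^n,-)$ (which preserves short exact sequences since $P^n$ is projective), and invoking the two-out-of-three property for acyclicity of complexes. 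This finishes the proof that $i_\lambda(X)$ is Gorenstein projective.

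For the ``in particular'' clause, suppose $L(A)$ is CM-free and let $X\in A\mbox{-Gproj}$; I want to show $X$ is projective. By the first part $i_\lambda(X)$ is a Gorenstein projective $L(A)$-module, hence projective by CM-freeness of $L(A)$. Therefore $ii_\lambda(X)$, viewed as an $A$-module, has ${\rm proj.dim}_A\,ii_\lambda(X)\leq 2$ (again by ${\rm proj.dim}_A\,L(A)\leq 2$). Splitting (\ref{equ:1}) into the two short exact sequences
\[
0\to t(X)\to X\to Y\to 0,\qquad 0\to Y\to ii_\lambda(X)\to t'(X)\to 0,
\]
and using that $t(X),t'(X)\in{\rm add}\,S$ satisfy ${\rm proj.dim}_A\,t(X),{\rm proj.dim}_A\,t'(X)\leq 1$ (since $S$ is localizable), one concludes ${\rm proj.dim}_A\,X<\infty$. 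Combined with $X\in A\mbox{-Gproj}$, this forces $X$ to be projective, by the standard fact that a Gorenstein projective module of finite projective dimension is projective (which also follows from Lemma \ref{lem:Gproj}). The main obstacle is the acyclicity of the dualized complex; both the adjunction reformulation and the finite-projective-dimension lemma are routine once one has observed ${\rm proj.dim}_A\,L(A)\leq 2$.
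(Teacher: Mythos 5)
Your proof is correct and takes essentially the same route as the paper: apply $i_\lambda$ to a complete resolution, use the adjunction $(i_\lambda, i)$ to identify the dual of $i_\lambda(P^\bullet)$ with ${\rm Hom}_A(P^\bullet, L(A))$, deduce acyclicity from ${\rm proj.dim}_A\, L(A)\leq 2$, and obtain the CM-free claim from the finite projective dimension of $X$ over $A$; the only divergence is that you inline the two sub-lemmas the paper cites (the acyclicity of ${\rm Hom}_A(P^\bullet,M)$ for $M$ of finite projective dimension, and the bound on ${\rm proj.dim}_A\,X$ from the four-term sequence). One small aside: the parenthetical claim that the fact ``a Gorenstein projective module of finite projective dimension is projective'' follows from Lemma \ref{lem:Gproj} is not really justified, since that lemma produces cosyzygies and by itself does not bound the projective dimension of $X$; your primary appeal to the standard fact, exactly as in the paper, is what carries the argument.
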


\begin{proof}
The last statement follows from the first one: if $i_\lambda(X)$ is projective, the $A$-module $X$ has finite projective dimension; see Lemma \ref{lem:homo}(1);  then it suffices to recall from  \cite[Proposition 10.2.3]{EJ} that a Gorenstein projective module of finite projective dimension is necessarily projective.

For the first statement, it suffices to show that for a totally acyclic complex $P^\bullet$ of $A$-modules, the complex $i_\lambda (P^\bullet)$ is totally acyclic. Observe that the complex $i_\lambda (P^\bullet)$ consists of projective $L(A)$-modules and is acyclic. From the adjoint pair $(i_\lambda, i)$, we have the isomorphism ${\rm Hom}_{L(A)}(i_\lambda(P^\bullet), L(A))\simeq {\rm Hom}_A(P^\bullet, L(A))$ of complexes.
Since the left $A$-module $L(A)$ has projective dimension at most two and $P^\bullet$ is totally acyclic, by \cite[Lemma 2.4(iii)]{AM}
the complex ${\rm Hom}_A(P^\bullet, L(A))$ is acyclic. It follows that the complex $i_\lambda (P^\bullet)$ of $L(A)$-modules is totally acyclic.
\end{proof}

\subsection{Gorenstein algebras} Recall from \cite{Buc, Hap91} that an artin algebra $A$ is \emph{Gorenstein} provided that the regular $A$-module
has finite injective dimension on both sides. In this case, both injective dimensions are the same, which are called the \emph{virtual dimension }of $A$ and denoted by ${\rm v.dim}\; A$. Observe that for a Gorenstein algebra $A$, a module has finite injective dimension if and only if it has finite projective dimension; moreover, we have ${\rm v.dim}\; A={\rm fin.dim}\; A$.

An artin algebra $A$ is selfinjective, if and only if it is Gorenstein with virtual dimension zero, if and only if $A\mbox{-Gproj}=A\mbox{-mod}$. On the other hand, $A$ has finite global dimension if and only if it is Gorenstein and CM-free; compare \cite[Theorem 6.9($\iota$)]{Bel2000}.

The following result is well known; see \cite[Theorem 6.9(13)]{Bel2000} and compare \cite[Theorem 3.2]{AM}.

\begin{lem}\label{lem:well}
Let $A$ be an artin algebra and let $d\geq 0$. Then $A$ is Gorenstein with ${\rm v.dim}\; A\leq d$ if and only if each $A$-module $X$ fits into an exact sequence $0\rightarrow G\rightarrow P^{-(d-1)}\rightarrow \cdots \rightarrow P^{-1}\rightarrow P^0\rightarrow X\rightarrow 0$ with each $P^{-j}$ projective and $G\in A\mbox{-{\rm Gproj}}$. \hfill $\square$
\end{lem}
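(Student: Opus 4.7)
The plan is to handle the two directions separately.

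For the $(\Rightarrow)$ direction, assume $A$ is Gorenstein with ${\rm v.dim}\; A \leq d$. For any $X$, I would take a projective resolution and set $G = \Omega^d X$, producing the required sequence $0\to G\to P^{-(d-1)}\to\cdots\to P^0\to X\to 0$. The task reduces to checking that $G$ is Gorenstein projective. First, dimension shifting along the resolution combined with ${\rm inj.dim}\; {}_AA \leq d$ yields ${\rm Ext}^i_A(G, A) = 0$ for all $i \geq 1$. To build a complete resolution of $G$, I would glue the existing projective resolution to a projective coresolution constructed as follows: since ${\rm inj.dim}\; A_A \leq d$, the right $A$-module $G^\ast = {\rm Hom}_A(G, A)$ has finite projective dimension; take a finite projective resolution of $G^\ast$ over $A^{\rm op}$ and apply ${\rm Hom}_{A^{\rm op}}(-, A)$. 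The reflexivity $G \simeq G^{\ast\ast}$, and the total acyclicity of the spliced complex after ${\rm Hom}_A(-, A)$, follow from the Ext vanishing plus the two-sided Gorenstein condition.

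For the $(\Leftarrow)$ direction, assume every $A$-module has Gorenstein projective dimension $\leq d$. The bound ${\rm inj.dim}\; {}_AA \leq d$ is immediate: for any $Y$, the hypothesis supplies $0 \to G \to P^{-(d-1)}\to\cdots\to P^0 \to Y \to 0$ with $G$ Gorenstein projective, so ${\rm Ext}^{d+1}_A(Y, A) \simeq {\rm Ext}^1_A(G, A) = 0$ by dimension shifting and the defining property of Gorenstein projective modules. For the companion bound ${\rm inj.dim}\; A_A \leq d$, I would use the duality identity ${\rm inj.dim}\; A_A = {\rm proj.dim}_A\; D(A_A)$ and apply the hypothesis to the left $A$-module $D(A_A)$, producing a sequence $0 \to G \to P^{-(d-1)} \to \cdots \to P^0 \to D(A_A) \to 0$ with $G$ Gorenstein projective. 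It now suffices to show $G$ is genuinely projective; by the principle recalled in the proof of Proposition \ref{prop:Gproj}, this reduces to showing ${\rm pd}_A\; G < \infty$.

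The main obstacle is precisely this final upgrade from Gorenstein projective to projective. The cleanest route I see is to run a symmetric right-handed version of the argument, or equivalently to invoke an Iwanaga--Auslander-type symmetry for artin algebras to conclude that one-sided finite injective dimension of $A$ forces the other. Once $G$ is shown to have finite projective dimension, it becomes projective by the cited fact, and the length-$d$ projective resolution of $D(A_A)$ gives ${\rm inj.dim}\; A_A \leq d$, completing the proof.
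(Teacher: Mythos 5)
The paper itself does not prove Lemma~\ref{lem:well}; it records it as well known and cites \cite[Theorem 6.9(13)]{Bel2000} and \cite[Theorem 3.2]{AM}. So there is no in-paper argument to compare against, but your proposal has two concrete gaps.

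In the $(\Rightarrow)$ direction, the assertion that ${\rm inj.dim}\; A_A\leq d$ forces $G^\ast={\rm Hom}_A(G,A)$ to have finite projective dimension over $A^{\rm op}$ is unjustified and in fact false in general: once the lemma is established, $G^\ast$ is a Gorenstein projective $A^{\rm op}$-module, and such a module has infinite projective dimension unless it is already projective. The bound ${\rm inj.dim}\; A_A\leq d$ controls the vanishing of ${\rm Ext}^i_{A^{\rm op}}(-,A)$ for $i>d$, not projective dimension. Moreover, even granting your claim, splicing a (left-infinite) projective resolution of $G$ onto a \emph{finite} coresolution produced by dualizing a finite resolution of $G^\ast$ cannot yield a complete resolution, which must be unbounded in both directions. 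The correct route, as in \cite[Theorem 3.2]{AM}, is to take the full (generally infinite) projective resolution of $G^\ast$ over $A^{\rm op}$, establish ${\rm Ext}^i_{A^{\rm op}}(G^\ast,A)=0$ for $i>0$ and the reflexivity $G\simeq G^{\ast\ast}$, and then splice; these verifications use both one-sided injective dimension bounds on $A$.

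In the $(\Leftarrow)$ direction, your argument for ${\rm inj.dim}\;{}_AA\leq d$ is fine, but to pass to ${\rm inj.dim}\;A_A\leq d$ you reduce to showing that the Gorenstein projective syzygy $G$ of $D(A_A)$ is actually projective and then invoke ``Iwanaga--Auslander-type symmetry'' to get the right-sided bound from the left-sided one. For artin algebras that implication, ${\rm inj.dim}\;{}_AA<\infty \Rightarrow {\rm inj.dim}\;A_A<\infty$, is precisely the Gorenstein Symmetry Conjecture, which is open; it cannot be invoked. (The ``symmetric right-handed version'' you float is also unavailable: by duality the hypothesis says every right $A$-module has Gorenstein \emph{injective} dimension $\leq d$, which does not feed a mirror-image argument.) The cited proof in \cite{Bel2000} has to exploit the full strength of the hypothesis --- that \emph{every} module admits a length-$d$ Gorenstein projective resolution --- rather than first discarding it down to the single inequality ${\rm inj.dim}\;{}_AA\leq d$; as your attempt stands, this implication is the missing core of the lemma.
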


The next result relates the Gorensteinness of $A$ and $L(A)$ in a left retraction.  We mention that this result is related  to \cite[Proposition 6(3)]{Nag}; consult the remarks after Lemma \ref{lem:quotient}.

\begin{thm}\label{thm:1}
Let $\eta_A\colon A\rightarrow L(A)$ be the left retraction associated to the localizable module $S_n=S$. Identify $S^\perp$ with
$L(A)\mbox{-{\rm mod}}$.  Denote by $I_n$ the injective envelop  of $S_n$.  Then $A$ is Gorenstein if and only if $L(A)$ is Gorenstein and ${\rm proj.dim}_{L(A)} \; i_\lambda(I_n)<\infty$.
\end{thm}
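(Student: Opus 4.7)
The plan is to prove the biconditional via the Gorenstein projective dimension characterization of Lemma~\ref{lem:well}, transferring Gorenstein projective resolutions between $A$ and $L(A)$ using Proposition~\ref{prop:Gproj}, the exactness of $i$ and $i_\lambda$, the fact that $i_\lambda\circ i \simeq \mathrm{Id}$, and the homological comparison of Lemma~\ref{lem:homo}.

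For the forward direction, assume $A$ is Gorenstein with $\mathrm{v.dim}\,A=d$. Since $I_n$ has finite $A$-projective dimension (being injective over a Gorenstein algebra), Lemma~\ref{lem:homo}(1) yields $\mathrm{proj.dim}_{L(A)}\,i_\lambda(I_n) \leq \mathrm{proj.dim}_A\,I_n < \infty$. For the Gorensteinness of $L(A)$, take any $Y\in L(A)\mbox{-mod}$, apply Lemma~\ref{lem:well} to $i(Y)$ to obtain $0\to G\to P^{-(d-1)}\to\cdots\to P^0\to i(Y)\to 0$ with $G\in A\mbox{-Gproj}$ and the $P^{-j}$ projective, then apply $i_\lambda$; the resulting sequence provides $Y$ with a Gp resolution of length $d$ over $L(A)$ (using $i_\lambda(G)\in L(A)\mbox{-Gproj}$ from Proposition~\ref{prop:Gproj} and that $i_\lambda$ sends projectives to projectives). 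Lemma~\ref{lem:well} then gives $L(A)$ Gorenstein with $\mathrm{v.dim}\,L(A)\leq d$.

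For the backward direction, assume $L(A)$ is Gorenstein with $\mathrm{v.dim}\,L(A)=d'$ and $e:=\mathrm{proj.dim}_{L(A)}\,i_\lambda(I_n)<\infty$. The plan is to exhibit, for every $X\in A\mbox{-mod}$, a Gorenstein projective resolution of length bounded uniformly in $X$. Via (\ref{equ:1}), the modules $t(X), t'(X)\in \mathrm{add}\,S$ have $A$-projective dimension $\leq 1$, so a dimension shift reduces the problem to bounding $\mathrm{Gpd}_A(ii_\lambda(X))$. For an $L(A)$-module $Y$, Lemma~\ref{lem:well} applied over $L(A)$ produces $0\to G\to Q^{-(d'-1)}\to\cdots\to Q^0\to Y\to 0$ with $G\in L(A)\mbox{-Gproj}$ and each $Q^{-j}$ of $A$-projective dimension $\leq 2$; the $A$-Gp dimension of $Y$ is then controlled, up to a shift by $d'+2$, by that of $G$.

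The main obstacle is the sub-claim that $G\in L(A)\mbox{-Gproj}$ has $\mathrm{Gpd}_A(G)$ uniformly bounded. The intended approach is to take a complete $L(A)$-resolution $P^\bullet$ of $G$ and replace, via Cartan--Eilenberg, each restricted term $i(P^j)$ by a projective $A$-resolution of length $\leq 2$, forming a total acyclic $A$-complex $\widetilde P^\bullet$ of projectives in which a bounded syzygy of $G$ occurs as a cocycle. The critical technical point, which is where the hypothesis $\mathrm{proj.dim}_{L(A)}\,i_\lambda(I_n)<\infty$ enters, is verifying total acyclicity of $\widetilde P^\bullet$: via the adjunction between $i$ and $\mathrm{Hom}_A(L(A),-)$, this reduces to acyclicity of $\mathrm{Hom}_{L(A)}(P^\bullet, R\mathrm{Hom}_A(L(A),A))$, and the finite projective dimension of $i_\lambda(I_n)$ is expected to translate, through the behaviour of $\mathrm{Ext}^q_A(L(A),A)$ as a right $L(A)$-module ($q=0,1,2$), into finite $L(A)$-injective dimension of these cohomologies, so that the $L(A)$-total acyclicity of $P^\bullet$ produces the desired acyclicity.
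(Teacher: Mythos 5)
Your forward direction matches the paper's: both apply $i_\lambda$ to a finite Gorenstein-projective resolution over $A$ coming from Lemma~\ref{lem:well} and use Proposition~\ref{prop:Gproj} to conclude $L(A)$ is Gorenstein, and both use Lemma~\ref{lem:homo}(1) for the projective-dimension condition on $i_\lambda(I_n)$.

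The backward direction is where you diverge, and where the proposal has a genuine gap. The paper's argument avoids complete resolutions entirely. It exploits the identity $t(I_n)\simeq S_n$, $t'(I_n)=0$, so that the exact sequence (\ref{equ:1}) for $I_n$ collapses to
$0\to S_n\to I_n\to ii_\lambda(I_n)\to 0$. Since $L(A)$ is Gorenstein and $\mathrm{proj.dim}_{L(A)}\,i_\lambda(I_n)<\infty$, the module $i_\lambda(I_n)$ also has finite $L(A)$-injective dimension, and $i$ preserves injectives; hence $ii_\lambda(I_n)$ has finite $A$-injective dimension, and so does $S_n$. From this one bootstraps: $\mathrm{add}\,S_n$ has finite injective dimension, $ii_\lambda(A)=L(A)$ has finite injective dimension, so by (\ref{equ:1}) applied to $_AA$ the regular module has finite injective dimension; and by Lemma~\ref{lem:homo}(1) plus Proposition~\ref{prop:simple}(4) each $I_j$ has finite projective dimension. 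That establishes Gorensteinness directly from the definition.

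Your plan instead tries to bound $\mathrm{Gpd}_A$ uniformly by lifting complete $L(A)$-resolutions to totally acyclic $A$-complexes via a Cartan--Eilenberg totalization. The sketch is reasonable up to the point where you must verify total acyclicity of $\widetilde P^\bullet$, which you correctly identify as reducing to the acyclicity of $\mathrm{Hom}_{L(A)}(P^\bullet, R\mathrm{Hom}_A(L(A),A))$ for a totally acyclic $L(A)$-complex $P^\bullet$. At this exact point you write that the hypothesis ``is expected to translate'' into finite $L(A)$-injective dimension of the cohomologies $\mathrm{Ext}^q_A(L(A),A)$; this is an assertion, not an argument. It is not obvious (nor argued) how the single \emph{left}-module condition $\mathrm{proj.dim}_{L(A)}\,i_\lambda(I_n)<\infty$ controls the \emph{left} $L(A)$-module structure on $\mathrm{Ext}^q_A(L(A),A)$ — note that $\mathrm{Hom}_A(L(A),A)$ is naturally a left $L(A)$-module via the bimodule structure on $L(A)$, not a right one as you wrote, so even the side of the module is not tracked carefully. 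Without establishing this finiteness, the total-acyclicity claim is unproved and the backward implication is not demonstrated. The paper's route, which works on the level of injective dimensions and a single three-term exact sequence, sidesteps all of this; if you want to pursue the complete-resolution approach, you must actually compute or bound the injective dimension of $R\mathrm{Hom}_A(L(A),A)$ over $L(A)$ in terms of the given hypothesis, and this computation is precisely the missing content.
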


\begin{proof}
For the ``only if" part, let $A$ be Gorenstein with ${\rm v.dim}\; A=d$. Recall that each $L(A)$-module $Y$ is of the form $i_\lambda(X)$ for an $A$-module $X$. Take an exact sequence $0\rightarrow G\rightarrow P^{-(d-1)}\rightarrow \cdots \rightarrow P^{-1}\rightarrow P^0\rightarrow X\rightarrow 0$ of $A$-modules with each $P^{-j}$ projective and $G\in A\mbox{-Gproj}$. We apply  the functor $i_\lambda$ to this sequence.  By Proposition \ref{prop:Gproj} and Lemma \ref{lem:well}, we get an exact sequence for $Y$, which implies that $L(A)$ is Gorenstein with ${\rm v.dim}\; L(A)\leq d$. Observe that $I_n$ has finite projective dimension, and then by Lemma \ref{lem:homo}(1) we have ${\rm proj.dim}_{L(A)} \; i_\lambda(I_n)<\infty$.

For the ``if" part, assume that  $L(A)$ is Gorenstein and ${\rm proj.dim}_{L(A)} \; i_\lambda(I_n)<\infty$. For the Gorensteinness of $A$, it suffices to show that the regular $A$-module $_AA$ has finite injective dimension and each injective $A$-module $I_j$ has finite projective dimension.

We observe that $i_\lambda(I_n)$ has finite injective dimension and so does the $A$-module $ii_\lambda (I_n)$. Here, we recall that $i$ sends injective $L(A)$-modules to injective $A$-modules. We observe that the exact sequence (\ref{equ:1}) for $I_n$ has the form
$$0\longrightarrow S_n\longrightarrow I_n\longrightarrow  ii_\lambda(I_n)\longrightarrow 0.$$
Here, we use that $t(I_n)\simeq S_n$ and $t'(I_n)=0$. It follows that $S_n$ has finite injective dimension. Consider
the regular $A$-module $_AA$. Since $i_\lambda(A)=L(A)$ has finite injective dimension, the $A$-module $ii_\lambda(A)$ has finite
injective dimension. Then the exact sequence (\ref{equ:1}) for $_AA$ implies that $_AA$ has finite injective dimension.

By Lemma \ref{lem:homo}(1) the $A$-module $I_n$ has finite projective dimension. It remains to prove that for each $1\leq j\leq n-1$,  the $A$-module $I_j$ has finite projective dimension. Proposition \ref{prop:simple}(4) implies that the $L(A)$-module $i_\lambda(I_j)$ is injective.  Since $L(A)$ is Gorenstein,  $i_\lambda(I_j)$ has finite projective dimension. Then applying Lemma \ref{lem:homo}(1), we are done.
\end{proof}

We observe the following consequence.

\begin{cor}
Let $S$ and $S'$ be two non-isomorphic localizable $A$-modules. Denote by $\eta_A\colon A\rightarrow L(A)$ and
$\eta'_A\colon A\rightarrow L'(A)$ the corresponding left retractions. Then $A$ is Gorenstein if and only if both $L(A)$ and
$L'(A)$ are Gorenstein.
\end{cor}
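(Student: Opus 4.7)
The ``only if'' direction is immediate from Theorem~\ref{thm:1}: if $A$ is Gorenstein, then in particular both $L(A)$ and $L'(A)$ are Gorenstein (the finiteness conditions on $i_\lambda(I_n)$ and $i'_\lambda(I_n')$ come for free from the theorem's forward implication).

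For the ``if'' direction, the plan is to invoke Theorem~\ref{thm:1} for the retraction associated to $S$: since $L(A)$ is already assumed Gorenstein, it suffices to show that ${\rm proj.dim}_{L(A)}\; i_\lambda(I_S) < \infty$, where $I_S$ is the injective envelope of $S$ in $A\mbox{-mod}$. By Lemma~\ref{lem:homo}(1) applied to the retraction associated to $S$, this reduces to showing that ${\rm proj.dim}_A\; I_S < \infty$.

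To get this finiteness of ${\rm proj.dim}_A\; I_S$, I would now switch to the \emph{other} retraction $\eta'_A\colon A\to L'(A)$ associated to $S'$. Since $S\not\simeq S'$, Proposition~\ref{prop:simple}(1) shows that $i'_\lambda(S)$ is a simple $L'(A)$-module, and Proposition~\ref{prop:simple}(4) shows that $i'_\lambda(I_S)$ is its injective envelope in $L'(A)\mbox{-mod}$. Since $L'(A)$ is Gorenstein by hypothesis, every injective $L'(A)$-module has finite projective dimension, so ${\rm proj.dim}_{L'(A)}\; i'_\lambda(I_S) < \infty$. Now Lemma~\ref{lem:homo}(1) applied to the $S'$-retraction yields
\[
{\rm proj.dim}_A\; I_S \;\leq\; {\rm proj.dim}_{L'(A)}\; i'_\lambda(I_S) + 2 \;<\; \infty.
\]
Feeding this back into Lemma~\ref{lem:homo}(1) for the $S$-retraction gives ${\rm proj.dim}_{L(A)}\; i_\lambda(I_S) \leq {\rm proj.dim}_A\; I_S < \infty$, and Theorem~\ref{thm:1} then concludes that $A$ is Gorenstein.

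The only subtlety — and what might be perceived as the main obstacle — is to see why the second retraction buys us anything at all. The key observation is that Theorem~\ref{thm:1} leaves exactly one residual finiteness to be checked, namely the projective dimension of $i_\lambda(I_S)$ over $L(A)$; and since $S\not\simeq S'$, the module $I_S$ becomes an \emph{injective envelope of a simple module} upstairs in $L'(A)\mbox{-mod}$, where Gorensteinness of $L'(A)$ automatically forces finite projective dimension. No further computation is needed beyond two invocations of Lemma~\ref{lem:homo}(1) sandwiched around this observation.
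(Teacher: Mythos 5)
Your proof is correct and follows essentially the same path as the paper's own argument: apply Proposition~\ref{prop:simple}(4) to the $S'$-retraction to see that $i'_\lambda(I_S)$ is injective over the Gorenstein algebra $L'(A)$ and hence has finite projective dimension, transfer this to $A$ and then to $L(A)$ via two uses of Lemma~\ref{lem:homo}(1), and conclude with Theorem~\ref{thm:1}. The only difference is that you explicitly flag the role of Proposition~\ref{prop:simple}(1) and the hypothesis $S\not\simeq S'$, which the paper leaves implicit.
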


\begin{proof}
The ``only if" part follows from Theorem \ref{thm:1}.

For the ``if" part, assume that $S_n=S$ and $S_1=S'$. Consider the injective
envelop $I_n$ for $S$. Applying Proposition \ref{prop:simple}(4) to $S'$, we have that the $L'(A)$-module $i_\lambda'(I_n)$ is injective and then has finite projective dimension. By Lemma \ref{lem:homo}(1) for $S'$, the $A$-module $I_n$ has finite projective dimension. It follows from  Lemma \ref{lem:homo}(1) for $S$ that $i_\lambda(I_n)$ has finite projective dimension. Applying Theorem \ref{thm:1} for $S_n=S$, we are done.
\end{proof}

\subsection{Recollements and singular equivalences}

We will show that a localizable module induces a recollement \cite{BBD} of derived categories and a triangle equivalence between singularity categories.

For an artin algebra $A$, denote by
$\mathbf{D}^b(A\mbox{-mod})$ the bounded derived category. Recall that $A\mbox{-mod}$ embeds into $\mathbf{D}^b(A\mbox{-mod})$ by identifying an $A$-module with the stalk complex concentrated on degree zero.  For an exact functor $F\colon A\mbox{-mod}\rightarrow A'\mbox{-mod}$ between module categories, we denote by $F^*\colon \mathbf{D}^b(A\mbox{-mod})\rightarrow \mathbf{D}^b(A'\mbox{-mod})$ its natural extension on complexes.

Let $S$ be a localizable $A$-module. Consider the left localization $\eta_A\colon A\rightarrow L(A)$. By identifying $S^\perp$ with  $L(A)\mbox{-{\rm mod}}$, the functor $i\colon A\mbox{-mod}\rightarrow S^\perp$ identifies with ${\rm Hom}_{L(A)}(L(A), -)$, which is further isomorphic to $L(A)\otimes_{L(A)}-$. It follows that $i$ admits a right adjoint $i_\rho={\rm Hom}_A(L(A), -)$. Recall the equivalence ${\rm add}\; S\simeq \Delta\mbox{-mod}$ with $\Delta={\rm End}_A(S)^{\rm op}$.

\begin{lem}\label{lem:recolle}
Keep the notation as above. Then we have a recollement of derived categories
\[\xymatrixrowsep{3pc} \xymatrixcolsep{2pc}\xymatrix{
\mathbf{D}^b(L(A)\mbox{-{\rm mod}}) \;\ar[rr]|-{i^*} &&\;\mathbf{D}^b(A\mbox{-{\rm mod}}) \; \ar[rr]|-{\mathbf{R}^b t}
\ar@<1.2ex>[ll]^-{\mathbf{R}^b i_\rho}\ar@<-1.2ex>[ll]_-{i^*_\lambda}&&
\; \mathbf{D}^b(\Delta\mbox{-{\rm mod}}) \ar@<1.2ex>[ll]^-{}\ar@<-1.2ex>[ll]_-{{\rm inc}^*}
}\]
\end{lem}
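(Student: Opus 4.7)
The plan is to lift the module-level adjoint triples $(i_\lambda, i, i_\rho)$ and $(\mathrm{inc}, t)$ to the bounded derived categories and verify the recollement axioms; the two critical finiteness inputs, both recorded in the preceding subsections, are $\mathrm{proj.dim}_A\, L(A) \leq 2$ and $\mathrm{proj.dim}_A\, S \leq 1$.

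First I identify the six functors. The exact functors $i$, $i_\lambda$, $\mathrm{inc}$ extend term-wise to triangle functors $i^*$, $i^*_\lambda$, $\mathrm{inc}^*$ on bounded derived categories. The left exact functor $i_\rho = \mathrm{Hom}_A(L(A), -)$ has cohomological dimension at most $2$ by the bound on $\mathrm{proj.dim}_A\, L(A)$, so $\mathbf{R}^b i_\rho$ preserves $\mathbf{D}^b$. Under the equivalence $\mathrm{add}\, S \simeq \Delta\mbox{-mod}$, the functor $t$ identifies with $\mathrm{Hom}_A(S, -)$, which has cohomological dimension at most $1$, so $\mathbf{R}^b t$ also preserves $\mathbf{D}^b$. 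The unlabelled upper-right arrow, the right adjoint of $\mathbf{R}^b t$, is produced abstractly by the adjoint functor theorem in this finite-dimensional setting.

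Next, the three adjoint pairs $(i^*_\lambda, i^*)$, $(i^*, \mathbf{R}^b i_\rho)$, and $(\mathrm{inc}^*, \mathbf{R}^b t)$ are inherited from their module-level counterparts: exactness of the left member requires no resolution on that side, and the bounded cohomological dimension of the right member makes the derived adjunction formula work on $\mathbf{D}^b$. Full faithfulness of $i^*$ follows from that of the exact $i$ together with the isomorphism $i_\lambda \circ i \simeq \mathrm{id}_{S^\perp}$; full faithfulness of $\mathrm{inc}^*$ uses semisimplicity of $\mathrm{add}\, S$, $\mathrm{End}_A(S) = \Delta^{\mathrm{op}}$, and $\mathrm{Ext}_A^1(S, S) = 0$. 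The vanishing $\mathbf{R}^b t \circ i^* = 0$ reduces, for $Y \in L(A)\mbox{-mod}$, to $\mathrm{Ext}_A^k(S, i(Y)) = 0$ for $k = 0, 1$, which is precisely the defining condition of $S^\perp$; for a general bounded complex one propagates by induction on cohomological amplitude.

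Finally I match the essential image of $i^*$ with the kernel of $\mathbf{R}^b t$. For a module $X$, the exact sequence (\ref{equ:1}) realizes the unit $X \to i^* i^*_\lambda(X)$ with kernel $t(X)$ and cokernel $t'(X)$, both in $\mathrm{add}\, S$. Organizing this into a distinguished triangle and extending by devissage over cohomological amplitude, every $X^\bullet \in \mathbf{D}^b(A\mbox{-mod})$ fits in a canonical triangle $\mathrm{inc}^*(C^\bullet) \to X^\bullet \to i^* i^*_\lambda(X^\bullet) \to \mathrm{inc}^*(C^\bullet)[1]$ for some $C^\bullet \in \mathbf{D}^b(\Delta\mbox{-mod})$. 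Given $X^\bullet$ with $\mathbf{R}^b t(X^\bullet) = 0$, applying $\mathbf{R}^b t$ to this triangle and using $\mathbf{R}^b t \circ i^* = 0$ together with the isomorphism $\mathrm{id} \simeq \mathbf{R}^b t \circ \mathrm{inc}^*$ (the unit of $(\mathrm{inc}^*, \mathbf{R}^b t)$, which is an iso by full faithfulness of $\mathrm{inc}^*$) forces $C^\bullet = 0$, and hence $X^\bullet \simeq i^* i^*_\lambda(X^\bullet)$ lies in the essential image of $i^*$. The anticipated main obstacle is the construction and naturality of this canonical triangle for arbitrary bounded complexes, together with the bookkeeping required to keep all six functors inside the bounded derived categories of \emph{finitely generated} modules; this is precisely where the projective dimension bounds on $L(A)$ and $S$ become indispensable.
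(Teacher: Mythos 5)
Your overall strategy — lift the adjoint triples $(i_\lambda, i, i_\rho)$ and $(\mathrm{inc}, t)$ to $\mathbf{D}^b$ using the projective dimension bounds on $L(A)$ and $S$, then check the recollement axioms directly — is what the paper outsources to \cite[Proposition 3.3.2]{CK}, adding only the remark that $\mathbf{R}^b i_\rho$ and $\mathbf{R}^b t$ preserve boundedness for exactly the reasons you give. So you have correctly identified the two finiteness inputs that carry the argument, and your outline follows the right path. However, two of your steps have genuine gaps.

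First, the existence of the unlabelled right adjoint of $\mathbf{R}^b t$ cannot be obtained "abstractly by the adjoint functor theorem in this finite-dimensional setting": Brown representability applies to compactly generated (hence cocomplete) triangulated categories, and $\mathbf{D}^b(A\mbox{-mod})$ for an artin algebra of possibly infinite global dimension is neither cocomplete nor automatically equipped with a Serre functor. One must instead either (a) establish the recollement at the level of unbounded derived categories, where Brown representability does give all six functors, and then check that each one preserves bounded complexes of finitely generated modules (which is where your proj.dim bounds and the glueing triangles enter), or (b) construct the right adjoint explicitly as the cone of the natural map $i^*\, \mathbf{R}^b i_\rho\, \mathrm{inc}^*(-) \to \mathrm{inc}^*(-)$ and verify the adjunction by hand.

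Second, the devissage producing the canonical triangle $\mathrm{inc}^*(C^\bullet) \to X^\bullet \to i^* i^*_\lambda(X^\bullet) \to$ for an arbitrary bounded complex $X^\bullet$ is exactly the kind of "naturality of a cone" argument that does not propagate automatically along triangles, since cones are not functorial. A cleaner route avoids it entirely: first verify $\ker i^*_\lambda = \mathrm{im}\,\mathrm{inc}^* = \mathrm{thick}\langle S\rangle$ (here your observation that $\mathrm{proj.dim}_A S\le 1$ forces $\mathrm{Ext}^k_A(S,S)=0$ for \emph{all} $k\ge 1$ is essential, not just $\mathrm{Ext}^1$ as you wrote; without vanishing of all higher self-extensions, $\mathrm{inc}^*$ is not fully faithful and $\mathrm{thick}\langle S\rangle$ is larger than $\mathrm{im}\,\mathrm{inc}^*$). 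From the adjunctions and full faithfulness of $i^*$ one then shows the cone of the unit $X^\bullet \to i^* i^*_\lambda(X^\bullet)$ lies in $\ker i^*_\lambda$, hence in $\mathrm{im}\,\mathrm{inc}^*$; this both establishes $\ker\mathbf{R}^b t = \mathrm{im}\,i^*$ and produces the glueing triangles without any devissage. Once all six functors, the three adjunctions, full faithfulness, the vanishing $\mathbf{R}^b t\circ i^*=0$, and the kernel--image identity are in place, the recollement axioms (R4) follow by a standard formal argument.
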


We mention that both functors $i_\rho\colon A\mbox{-mod}\rightarrow L(A)\mbox{-mod}$ and $t\colon A\mbox{-mod}\rightarrow {\rm add}\; S \simeq \Delta\mbox{-mod}$ are left exact. The notation $\mathbf{R}^b$ means the right (bounded) derived functor.

\begin{proof}
The proof is similar to \cite[Proposition 3.3.2]{CK}. Here, it suffices to note that $\mathbf{R}^b \rho$ is well defined, since the left $A$-module $L(A)$ has finite projective dimension; moreover, $\mathbf{R}^b \rho$ is left adjoint to $i^*$.  Similar remarks hold for $t$, since $t\simeq {\rm Hom}_A(S, -)$.
\end{proof}

For an artin algebra $A$, the \emph{singularity category} $\mathbf{D}_{\rm sg}(A)$ is the Verdier quotient category of
$\mathbf{D}^b(A\mbox{-mod})$ by the triangulated subcategory ${\rm perf}(A)$ formed by perfect complexes; see \cite{Buc, Or04}. Here, a bounded complex of $A$-modules is\emph{ perfect} provided that it is isomorphic to a bounded complex of projective $A$-modules in  $\mathbf{D}^b(A\mbox{-mod})$. The triangulated subcategory ${\rm per}(A)$ is \emph{thick}, that is, it is closed under taking direct summands.

Consider the following composite of functors
$$G_A\colon A\mbox{-Gproj}\; \hookrightarrow A\mbox{-mod} \longrightarrow \mathbf{D}^b(A\mbox{-mod})
\longrightarrow \mathbf{D}_{\rm sg}(A)$$ where from the left side,
the first functor is the inclusion, the second identifies modules
with stalk complexes concentrated on degree zero,  and the last is the quotient functor. Observe
that the additive functor $G_A$ vanishes on projective modules and
then induces uniquely an additive  functor
$A\mbox{-\underline{Gproj}} \rightarrow \mathbf{D}_{\rm sg}(A)$,
which is still denoted by $G_A$.

\vskip 5pt

We recall the following fundamental result.

\begin{lem} \label{lem:BuchweitzHappel}  The functor $G_A\colon
A\mbox{-\underline{\rm Gproj}}\rightarrow \mathbf{D}_{\rm sg}(A)$
is a fully faithful triangle functor. Moreover, the algebra $A$ is
Gorenstein if and only if $G_A$ is dense and thus a triangle
equivalence. In particular, if $A$ is self-injective, we have a triangle equivalence $\mathbf{D}_{\rm sg}(A)\simeq A\mbox{-\underline{\rm mod}}$.
\end{lem}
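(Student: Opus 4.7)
The plan is to prove the classical Buchweitz--Happel theorem along its standard lines, splitting the proof into three parts: triangulated structure and fully-faithfulness of $G_A$, density under the Gorenstein hypothesis, and the converse direction (plus the self-injective specialization). Throughout I would use the Frobenius structure on $A\mbox{-Gproj}$ (its projective-injective objects are precisely the projective $A$-modules, since any Gorenstein projective module $X$ admits a short exact sequence $0\to X\to P\to X'\to 0$ with $P$ projective and $X'$ Gorenstein projective, showing that projective $A$-modules are also injective in the exact category $A\mbox{-Gproj}$).

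\textbf{Triangle functor and fully-faithfulness.} To see that $G_A$ is a triangle functor, I would verify two things. First, a short exact sequence $0\to X\to Y\to Z\to 0$ with terms in $A\mbox{-Gproj}$ becomes a triangle in $\mathbf{D}^b(A\mbox{-mod})$ and hence in $\mathbf{D}_{\rm sg}(A)$, which matches the Frobenius triangulated structure on $A\mbox{-\underline{Gproj}}$. Second, the translation on $A\mbox{-\underline{Gproj}}$ is represented by the cosyzygy $\Sigma X$ defined by a short exact sequence $0\to X\to P\to \Sigma X\to 0$ with $P$ projective; this same sequence yields a triangle $X\to 0\to \Sigma X\to X[1]$ in $\mathbf{D}_{\rm sg}(A)$, hence $G_A(\Sigma X)\simeq G_A(X)[1]$ canonically. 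For fully-faithfulness, fix Gorenstein projectives $X, Y$ and use a complete resolution $P^\bullet$ of $X$. A morphism in $\mathbf{D}_{\rm sg}(A)$ from $X$ to $Y$ is represented by a roof $X\stackrel{s}\leftarrow Z\to Y$ with $\mathrm{cone}(s)$ perfect; using the complete resolution of $X$ one truncates and shows every such roof is equivalent to an honest chain map, and moreover that two such maps agree in $\mathbf{D}_{\rm sg}(A)$ iff they differ by a map factoring through a projective. This is the standard argument using that $\mathrm{Hom}_{A}(P^\bullet, Q)$ is acyclic for any projective $Q$ (dualizing the totally acyclic condition).

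\textbf{Density under Gorensteinness.} Assume $A$ is Gorenstein with ${\rm v.dim}\; A = d$. By Lemma \ref{lem:well}, any $A$-module $M$ sits in an exact sequence $0\to G\to P^{-(d-1)}\to\cdots\to P^{0}\to M\to 0$ with $G\in A\mbox{-Gproj}$ and each $P^{-j}$ projective. This exhibits $M$ as isomorphic in $\mathbf{D}_{\rm sg}(A)$ to $G[-d]$, which equals $G_A(\Sigma^{d} G)$ up to the triangle-functor identification above. Since the singularity category is generated by modules, this proves density; combined with fully-faithfulness, $G_A$ is a triangle equivalence.

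\textbf{Converse and self-injective case.} For the converse, suppose $G_A$ is dense. Then for every simple $S_j$ there is $G\in A\mbox{-Gproj}$ with $S_j\simeq G_A(G)$ in $\mathbf{D}_{\rm sg}(A)$, which forces $S_j$ to admit a finite Gorenstein projective resolution; by standard dimension shifting this gives each module finite Gorenstein projective dimension, and hence $A$ is Gorenstein. Finally, if $A$ is self-injective, then every module is Gorenstein projective (any projective resolution can be spliced with an injective coresolution into a totally acyclic complex, since projective equals injective), so $A\mbox{-Gproj}=A\mbox{-mod}$ and $A\mbox{-\underline{Gproj}}=A\mbox{-\underline{mod}}$, and the equivalence follows from the Gorenstein case. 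The main technical obstacle is the fully-faithfulness step, where the calculus of fractions in the Verdier quotient must be translated into stable module maps via complete resolutions; this is where the totally acyclic condition on $(P^\bullet)^*$ is essential, and where the argument is usually relegated to \cite{Buc, Hap91}.
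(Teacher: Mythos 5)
The paper does not actually prove this lemma; it cites Buchweitz \cite{Buc}, Happel \cite{Hap91}, Beligiannis \cite{Bel2000} for the converse direction, and Rickard \cite{Ric} for the self-injective case. Your proposal, by contrast, reconstructs the standard argument from scratch, which is a genuinely different (and more self-contained) route. Your outline is correct in its essentials: the Frobenius structure on $A\mbox{-Gproj}$ and the triangulated structure on the stable category via Happel, the compatibility of $G_A$ with cones and suspension via the short exact sequence $0\to X\to P\to \Sigma X\to 0$, the use of complete resolutions to control roofs in the Verdier quotient, density from the Gorenstein characterization in Lemma \ref{lem:well}, and the self-injective specialization by splicing. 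You also correctly flag that the calculus-of-fractions manipulation behind fully-faithfulness is the heavy lifting.

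A few points deserve tightening. First, in your justification of the Frobenius property, the existence of a short exact sequence $0\to X\to P\to X'\to 0$ with $P$ projective and $X'\in A\mbox{-Gproj}$ shows that $A\mbox{-Gproj}$ has enough injectives; it does not by itself show that projectives are injective in this exact category. For that you need $\mathrm{Ext}^1_A(G,P)=0$ for all $G\in A\mbox{-Gproj}$ and all projective $P$, which is exactly what total acyclicity of the dual complex supplies. Both facts are needed and are distinct. Second, in the density step you wrote that $M$ is isomorphic to $G[-d]$; from the syzygy sequence it is $M\simeq G[d]$ in $\mathbf{D}_{\rm sg}(A)$, which then agrees with $G_A(\Sigma^d G)$ as you intended, so this is only a sign typo but worth fixing. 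Third, the converse direction compresses two nontrivial steps: extracting from an isomorphism $S_j\simeq G_A(G)$ in $\mathbf{D}_{\rm sg}(A)$ that some syzygy $\Omega^n S_j$ is Gorenstein projective (hence $S_j$ has finite Gorenstein projective dimension), and then the fact that finiteness of Gorenstein projective dimension for all modules characterizes Gorenstein algebras; the latter is precisely what \cite[Theorem 6.9(8)]{Bel2000} is cited for in the paper, and your sketch should make clear it is invoking that characterization rather than an elementary dimension shift. What your approach buys is transparency: one sees where each hypothesis enters, in particular exactly where total acyclicity (not mere acyclicity) is used. What the paper's citation buys is brevity and delegates the delicate Verdier-quotient bookkeeping to the canonical references.
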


\begin{proof}
The result is due to Buchweitz \cite[Theorem 4.4.1]{Buc} and independently due to
Happel \cite[Theorem 4.6]{Hap91}. We mention that the ``if" part of the second statement follows from \cite[Theorem 6.9(8)]{Bel2000}.

For a self-injective algebra $A$, we have $A\mbox{-Gproj}=A\mbox{-mod}$. Then the final statement, that is also due to \cite[Theorem 2.1]{Ric}, follows.
\end{proof}

Recall that a nontrivial triangulated category $\mathcal{T}$ is \emph{minimal} provided that it has no nontrivial thick subcategories. Lemma \ref{lem:BuchweitzHappel} implies that $A\mbox{-\underline{\rm Gproj}}$ is a thick subcategory of $\mathbf{D}_{\rm sg}(A)$. Then we have the following consequence.

\begin{cor}\label{cor:CM-free}
Let $A$ be a non-Gorenstein algebra such that $\mathbf{D}_{\rm sg}(A)$ is minimal. Then $A$ is CM-free. \hfill $\square$
\end{cor}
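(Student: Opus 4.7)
The plan is to combine the dichotomy supplied by Lemma \ref{lem:BuchweitzHappel} with the minimality hypothesis. First I would verify that $\mathbf{D}_{\rm sg}(A)$ is itself nontrivial, so that minimality is a substantive condition: indeed, $\mathbf{D}_{\rm sg}(A)=0$ is equivalent to $A$ having finite global dimension, and any such algebra is Gorenstein (in fact Gorenstein and CM-free), contradicting the hypothesis. Hence $\mathbf{D}_{\rm sg}(A)\neq 0$ and minimality does impose a real restriction.

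Next, by Lemma \ref{lem:BuchweitzHappel} the comparison functor $G_A\colon A\mbox{-\underline{Gproj}}\to\mathbf{D}_{\rm sg}(A)$ is a fully faithful triangle functor, and by the remark immediately preceding the corollary its essential image is a thick subcategory of $\mathbf{D}_{\rm sg}(A)$. Minimality then leaves exactly two possibilities: either this image is zero, or it is all of $\mathbf{D}_{\rm sg}(A)$. In the second case $G_A$ would be dense, and hence (again by Lemma \ref{lem:BuchweitzHappel}) a triangle equivalence, which would force $A$ to be Gorenstein, contradicting the hypothesis. Therefore the image of $G_A$ must be the zero subcategory, meaning that every object of $A\mbox{-\underline{Gproj}}$ is zero in the stable category; equivalently, every Gorenstein projective $A$-module is projective, which is precisely the definition of CM-free.

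There is no real technical obstacle here: the corollary is a pigeonhole-style consequence of the thick inclusion $A\mbox{-\underline{Gproj}}\hookrightarrow \mathbf{D}_{\rm sg}(A)$ together with two applications of Lemma \ref{lem:BuchweitzHappel} (fully faithful plus density-implies-Gorenstein). The one step to handle explicitly is ruling out the degenerate case $\mathbf{D}_{\rm sg}(A)=0$, which is what the first paragraph accomplishes; without that preliminary, ``minimal'' could vacuously apply to the zero category and the dichotomy argument would collapse.
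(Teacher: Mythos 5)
Your proof is correct and follows essentially the same route the paper intends: the essential image of $G_A$ is a thick subcategory of $\mathbf{D}_{\rm sg}(A)$, minimality forces it to be zero or everything, density would force Gorensteinness by Lemma \ref{lem:BuchweitzHappel}, so the image is zero and every Gorenstein projective is stably trivial, i.e.\ projective. The one small remark is that your preliminary paragraph ruling out $\mathbf{D}_{\rm sg}(A)=0$ is already built into the paper's definition of ``minimal'' (which is only declared for \emph{nontrivial} triangulated categories), so it is a harmless redundancy rather than a necessary step.
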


Recall from \cite{Ch11'} that a \emph{singular equivalence} between two algebras $A$ and $A'$ means a triangle equivalence between $\mathbf{D}_{\rm sg}(A)$ and $\mathbf{D}_{\rm sg}(A')$.  We observe that a left retraction induces a singular equivalence. The equivalence might be viewed as an enhancement of the isomorphism in Proposition \ref{prop:simple}(5). Here, we recall that for an artin  algebra $A$, the Grothendieck group $K_0(\mathbf{D}_{\rm sg}(A))$ of its singularity category $\mathbf{D}_{\rm sg}(A)$ is isomorphic to ${\rm Cok}\; C_A$; see \cite[4.1]{Hap91}. In view of the remarks after Lemma \ref{lem:quotient}, the following result might be deduced from \cite[Theorem 2.1]{Ch09}.

\begin{prop}\label{prop:sing}
Let $\eta_A\colon A\rightarrow L(A)$ be a left retraction associated to a localizable $A$-module $S$. Identify $S^\perp$ with $L(A)\mbox{-{\rm mod}}$. Then the functors $i^*_\lambda $ and $i^*$ induce mutually inverse triangle equivalences between $\mathbf{D}_{\rm sg}(A)$ and $\mathbf{D}_{\rm sg}(L(A))$.
\end{prop}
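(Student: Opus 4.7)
The plan is to show that both $i^*$ and $i^*_\lambda$ descend to triangle functors between singularity categories, and then to identify the two composites with the identity using the unit and counit of the adjoint pair $(i_\lambda, i)$ together with the exact sequence (\ref{equ:1}).

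First I would check preservation of perfect complexes. Since $i_\lambda$ is exact and is left adjoint to the exact functor $i$, it sends projective $A$-modules to projective $L(A)$-modules, and hence $i^*_\lambda$ maps ${\rm perf}(A)$ into ${\rm perf}(L(A))$. For the other direction, the remark following Lemma \ref{lem:quotient} gives ${\rm proj.dim}_A\; L(A)\leq 2$; as every finitely generated projective $L(A)$-module is a direct summand of some $L(A)^n$, it too has projective dimension at most $2$ over $A$. Hence $i^*$ maps a bounded complex of projective $L(A)$-modules to a bounded complex of $A$-modules of uniformly bounded projective dimension, which is itself perfect. These observations yield induced triangle functors $\bar{i}^*_\lambda\colon \mathbf{D}_{\rm sg}(A)\to \mathbf{D}_{\rm sg}(L(A))$ and $\bar{i}^*\colon \mathbf{D}_{\rm sg}(L(A))\to \mathbf{D}_{\rm sg}(A)$.

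Next I would identify the composites. Because $i\colon L(A)\mbox{-mod}\hookrightarrow A\mbox{-mod}$ is fully faithful, the counit $i_\lambda i\to {\rm Id}$ is already an isomorphism at the module level, so its derived version $i^*_\lambda i^*\to {\rm Id}$ is an isomorphism on $\mathbf{D}^b(L(A)\mbox{-mod})$; this passes to $\bar{i}^*_\lambda \bar{i}^*\simeq {\rm Id}$. For the other composite, I would invoke (\ref{equ:1}), noting that $t(X)$ and $t'(X)$ lie in ${\rm add}\; S$. Since $S$ is localizable, ${\rm proj.dim}_A\; S\leq 1$, so every module in ${\rm add}\; S$ is perfect and therefore zero in $\mathbf{D}_{\rm sg}(A)$. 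Splitting (\ref{equ:1}) into two short exact sequences and viewing them as distinguished triangles in $\mathbf{D}^b(A\mbox{-mod})$, the unit $\eta_X\colon X\to ii_\lambda(X)$ becomes an isomorphism in $\mathbf{D}_{\rm sg}(A)$ for every $A$-module $X$. A standard truncation-triangle d\'evissage on cohomological length then promotes this to a natural isomorphism $\bar{i}^*\bar{i}^*_\lambda\simeq {\rm Id}$ on all of $\mathbf{D}_{\rm sg}(A)$.

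The mildly technical step I expect to require some care is the final d\'evissage: given that $\eta_X$ is a singularity-category isomorphism for every module $X$, one extends this to arbitrary bounded complexes by induction on the length of the complex, applying the five lemma for triangles to the truncation triangle $\tau_{\leq n-1}X^\bullet \to \tau_{\leq n}X^\bullet \to H^n(X^\bullet)[-n]$ and using naturality of $\eta$. All remaining assertions reduce to the properties of localizable modules established in Lemma \ref{lem:quotient} and the surrounding discussion.
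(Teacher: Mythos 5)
Your proof is correct, and it takes a genuinely different route from the paper's. The paper's argument is top-down: it invokes the recollement of Lemma~\ref{lem:recolle} to identify $\mathbf{D}^b(L(A)\mbox{-mod})$ with the Verdier quotient $\mathbf{D}^b(A\mbox{-mod})/{\rm thick}\langle S\rangle$ via $i^*_\lambda$, observes ${\rm thick}\langle S\rangle\subseteq {\rm perf}(A)$ (because ${\rm proj.dim}_A\,S\leq 1$) together with $i^*_\lambda({\rm perf}(A))={\rm perf}(L(A))$, and then appeals to Verdier's canonical triangle equivalence $(\mathcal{D}/\mathcal{N})/(\mathcal{M}/\mathcal{N})\simeq \mathcal{D}/\mathcal{M}$ to collapse to the singularity categories in one stroke. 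You instead argue bottom-up: you verify directly that $i^*_\lambda$ and $i^*$ both send perfects to perfects, descend the adjunction, and show that the induced derived unit and counit become isomorphisms modulo perfects. Your approach has the virtue of being self-contained (it does not use the recollement at all) and of making the ``mutually inverse'' assertion in the statement explicit, whereas the paper's proof only exhibits $i^*_\lambda$ as inducing the equivalence and leaves the role of $i^*$ implicit via the recollement adjunctions; the paper's route is shorter once the recollement is in hand and fits into the general Verdier-localization framework used elsewhere in the section.

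One small simplification to your argument: the final truncation d\'evissage is not actually needed. Since $i$ and $i_\lambda$ are both exact, $i^*i^*_\lambda$ acts term-wise on a bounded complex $X^\bullet$, and the four-term exact sequence (\ref{equ:1}) applies degree-wise to give an exact sequence of complexes
\begin{equation*}
0\longrightarrow t(X^\bullet)\longrightarrow X^\bullet \stackrel{\eta_{X^\bullet}}\longrightarrow ii_\lambda(X^\bullet)\longrightarrow t'(X^\bullet)\longrightarrow 0,
\end{equation*}
where $t(X^\bullet)$ and $t'(X^\bullet)$ are bounded complexes with all terms in ${\rm add}\,S$ and hence already perfect. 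Splitting this into two short exact sequences of complexes and using the corresponding triangles shows directly that $\eta_{X^\bullet}$ is an isomorphism in $\mathbf{D}_{\rm sg}(A)$ for every bounded complex $X^\bullet$, with no induction on cohomological length required.
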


\begin{proof}
Consider the triangle functor $i^*_\lambda\colon \mathbf{D}^b(A\mbox{-{\rm mod}})\rightarrow \mathbf{D}^b(L(A)\mbox{-{\rm mod}})$. Denote by ${\rm thick}\langle S\rangle$ the smallest thick subcategory of $\mathbf{D}^b(A\mbox{-{\rm mod}})$ containing $S$. Observe that ${\rm thick}\langle S\rangle\subseteq {\rm perf}(A)$, since ${\rm proj.dim}_A\; S\leq 1$. By Lemma \ref{lem:homo}(1), we infer that $i^*_\lambda({\rm perf}(A))={\rm perf}(L(A))$.

By the recollement in Lemma \ref{lem:recolle}, we infer that $i^*_\lambda$ induces a triangle equivalence $\mathbf{D}^b(A\mbox{-{\rm mod}})/{{\rm thick}\langle S\rangle}\simeq \mathbf{D}^b(L(A)\mbox{-{\rm mod}})$, which restricts to an equivalence ${\rm perf}(A)/{{\rm thick}\langle S\rangle}\simeq {\rm perf}(L(A))$. Then we have done with a canonical triangle equivalence in \cite[Chaptre I, \S 2, 4-3 Corollaire]{Ver77}.
\end{proof}

\section{Nakayama algebras}

 In this section, we recall from \cite{Gus} some homological properties of Nakayama algebras, and introduce the notion of
 $\theta$-perfect element that relates to Gorenstein projective modules. We apply results in the previous section to prove that
 for a  connected Nakayama algebra $A$, there is a connected self-injective Nakayama  algebra $A'$ such that there is a sequence of left retractions linking $A$ to $A'$. Consequently,  the singularity category of $A$ is triangle equivalent to the stable category of $A'$.  We classify Nakayama algebras with at most three simple modules according to the trichotomy: Gorenstein, non-Gorenstein CM-free, non-Gorenstein but not CM-free. It turns out that there is a class of such algebras, that are non-Gorenstein but not CM-free.

\subsection{Nakayama algebras and homological properties} Let $A$ be an artin algebra. An $A$-module is uniserial provided that it  has a unique composition series. Recall that $A$ is \emph{Nakayama} provided that all indecomposable projective and  all indecomposable injective $A$-modules are uniserial, or equivalently, all indecomposable $A$-modules are uniserial.

Assume that $A$ is connected, that is, it does not admit a decomposition as a direct sum of two proper ideals. Then $A$ is Nakayama if and only if its valued quiver is a linear quiver with trivial valuation
\[\xymatrix{
S_1\ar[r] & S_2\ar[r] & \cdots \ar[r] & S_n }\]
or an oriented cycle with trivial valuation
\[\xymatrix{
S_1\ar[r] & S_2\ar[r] & \cdots \ar[r] & S_n. \ar@/^0.7pc/[lll]  }\]
Here, $\{S_1, S_2, \cdots, S_n\}$ is a complete set of representatives of pairwise non-isomorphic simple $A$-modules, and $n=n(A)$ is the number of pairwise non-isomorphic simple $A$-modules. In the first case, $A$ is called a \emph{line algebra}; in the second case, $A$ is a \emph{cycle algebra}; compare \cite{Zac, Mad}. Observe that a line algebra $A$ has finite global dimension; indeed, ${\rm gl.dim}\; A\leq n(A)$.

By an \emph{admissible sequence} of length $n$ we mean a sequence $\mathbf{c}=(c_1, c_2, \cdots, c_n)$ of $n$ positive integers subject to the
conditions $2\leq c_j\leq c_{j+1}+1$ for $j=1,2, \cdots, n-1$ and $c_n\leq c_1+1$. If $c_n\geq 2$, all cyclic permutations of $\mathbf{c}$ are admissible. An admissible sequence $\mathbf{c}$ is \emph{normalized} provided that $c_n=1$,  or $c_1=c_2=\cdots =c_n$, or $c_1$ is minimal among all $c_j$'s and $c_n=c_1+1$.

For a connected Nakayama algebra $A$, we order its simple modules as above. Denote by $P_j$ the projective cover of $S_j$. Then we have exact sequences $P_{j+1}\rightarrow P_j\rightarrow S_j\rightarrow 0$ for $j=1, 2, \cdots, n-1$. For a line algebra, we have  $P_n\simeq S_n$, while for a cycle algebra we have an exact sequence $P_1\rightarrow P_n\rightarrow S_n\rightarrow 0$. Hence, the Nakayama algebra $A$ corresponds to an admissible sequence $\mathbf{c}(A)=(l(P_1), l(P_2), \cdots, l(P_n))$. Moreover, by cyclic permutations, we may always get a normalized admissible sequence. Recall that $A$ is self-injective if and only if $l(P_1)=l(P_2)=\cdots=l(P_n)$.

An indecomposable $A$-module $X$ is uniquely determined by its top ${\rm top}(X)$ and its length $l=l(X)$. Here, the top ${\rm top}(X)=X/{{\rm rad}\; X}=S$ is simple, and  this unique module  $X$ is denoted by $S^{[l]}$.  Then a complete set of representatives of pairwise non-isomorphic indecomposable $A$-modules is given by $\{S_j^{[l]}\; |\; 1\leq j\leq n,\;  l\leq c_j\}$; moreover, the Auslander-Reiten quiver of $A$ is described in \cite[VI. 2]{ARS}.

 In what follows, we recall the minimal projective resolution of an indecomposable module. Recall that $\mathbf{c}(A)=(c_1, c_2, \cdots, c_n)$ is  the admissible sequence of $A$. Following \cite{Gus}, we introduce a map $\theta \colon \{1, 2, \cdots, n\}\rightarrow \{1, 2, \cdots, n\}$ such that $\theta(j)=\phi_n(j+c_j)$. Here, for each positive integer $x$, $\phi_n(x)$ is uniquely determined by the conditions that $1\leq \phi_n(x)\leq n$ and $n$ divides $x-\phi_n(x)$; compare \cite[Section 3]{Mad}. Then we have a descending chain $\{1, 2, \cdots, n\}={\rm Im}\;\theta^0\supseteq {\rm Im}\; \theta \supseteq {\rm Im}\; \theta^2\supseteq {\rm Im}\; \theta^3\supseteq \cdots$. There is a minimal integer $d(A)$ such that ${\rm Im}\; \theta^{d(A)}={\rm Im}\; \theta^{d(A)+1}$. Elements in  ${\rm Im}\; \theta^{d(A)}$ are called $\theta$-\emph{regular elements}. Observe that $0\leq d(A)\leq n(A)-1$, and that $d(A)=0$ if and only if $A$ is self-injective.

For an $A$-module $X$, denote by ${\rm soc}(X)$ the socle of $X$.

\begin{lem} {\rm (\cite{Gus})}\label{lem:top}
Keep the notation as above. Then we have the following statements:
 \begin{enumerate}
 \item ${\rm soc}(P_j)=S_{\theta(j)-1}$ for each $j$, where we identify  $0$ with $n$;
 \item for a nonzero homomorphism $f\colon P_j\rightarrow P_k$, we have that $\theta(j)=\theta(k)$ if $f$ is mono, ${\rm top}({\rm Ker}\; f)=S_{\theta(k)}$ otherwise.
 \end{enumerate}
\end{lem}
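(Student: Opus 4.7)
The strategy is to use the explicit composition series of indecomposable projectives over a (connected) Nakayama algebra. First I would record the basic fact that, with the ordering $S_1, S_2, \ldots, S_n$ following the linear quiver or the oriented cycle, each indecomposable projective $P_j$ is uniserial of length $c_j$ with composition factors (from top to bottom)
\[
S_j,\; S_{j+1},\; \ldots,\; S_{j+c_j-1},
\]
indices taken modulo $n$ (with $0$ identified with $n$). This is immediate from the Nakayama hypothesis together with the existence of the surjections $P_{j+1}\rightarrow {\rm rad}\, P_j$ arising from the quiver.

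For statement (1), I would simply read off the bottom of this composition series: ${\rm soc}(P_j)=S_{j+c_j-1}$, and rewrite the index as $\theta(j)-1$ using the definition $\theta(j)=\phi_n(j+c_j)$ and the convention $0\equiv n$. Nothing further is needed.

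For statement (2), I would exploit the uniseriality of $P_j$ and $P_k$, which forces ${\rm Ker}\,f={\rm rad}^s P_j$ for a unique $0\le s\le c_j$. The assumption $f\neq 0$ gives $s<c_j$, so ${\rm Im}\,f$ is a nonzero submodule of $P_k$ of length $s$ (if $f$ is mono) or of length $c_j-s$ with a nontrivial kernel otherwise. Since $P_k$ is uniserial, every nonzero submodule contains (and in fact has socle equal to) ${\rm soc}(P_k)=S_{\theta(k)-1}$ by (1). Thus the socle of ${\rm Im}\,f$ is $S_{\theta(k)-1}$.
\begin{itemize}
\item If $f$ is mono, then $s=0$ and ${\rm Im}\,f\simeq P_j$, whose socle is $S_{\theta(j)-1}$ by (1); comparing socles yields $\theta(j)=\theta(k)$.
\item If $f$ is not mono, then $1\le s\le c_j-1$, and ${\rm Im}\,f\simeq P_j/{\rm rad}^s P_j$ is the uniserial quotient with composition factors $S_j,S_{j+1},\ldots,S_{j+s-1}$, whose socle is $S_{j+s-1}$. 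Equating this with $S_{\theta(k)-1}$ gives $s\equiv \theta(k)-j\pmod n$, and since ${\rm Ker}\,f={\rm rad}^s P_j$ has top $S_{j+s}$, its top is $S_{\theta(k)}$, as required.
\end{itemize}

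The argument is essentially bookkeeping; the only real point to be careful about is the cyclic case, where all indices live modulo $n$ and one must ensure that the identification $0\equiv n$ is applied consistently to both occurrences (in the socle of $P_k$ coming from (1) and in the arithmetic of the composition factors of ${\rm Im}\,f$). No further difficulty is expected.
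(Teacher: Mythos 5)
Your argument is correct and follows the paper's (very terse) proof essentially verbatim: both rest on the explicit description of the composition factors of the uniserial module $P_j$, namely that the $m$-th factor from the top is $S_{\phi_n(j+m-1)}$, from which (1) is read off at the bottom and (2) follows by comparing socles of images and tops of kernels. One bookkeeping slip worth fixing: having set ${\rm Ker}\,f={\rm rad}^s P_j$, a monomorphism corresponds to $s=c_j$ (not $s=0$) and $f\neq 0$ forces $s\geq 1$ (not $s<c_j$); the sentence before the itemization and the first bullet inadvertently switch to treating $s$ as the \emph{length} of the kernel, although the decisive second bullet is internally consistent with the stated convention and the conclusions of both bullets are correct.
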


\begin{proof}
We use the fact that each indecomposable projective module $P_j$ is uniserial, and that from the top, the $m$-th composition factor in its composition series is $S_{\phi_n(j+m-1)}$.
\end{proof}

We have the following immediate consequence on minimal projective resolutions of indecomposable modules over a Nakayama algebra.

\begin{cor}{\rm (\cite[(5)]{Gus})} \label{cor:resolution}
Let $X$ be an indecomposable non-projective $A$-module. Assume that $X=S_j^{[l]}$ and that $k=\phi_n(j+l)$. Then we have the following statements:
\begin{enumerate}
\item if ${\rm proj.dim}_A\; X=2m$ for $m\geq 1$, then there is an exact sequence
$$0\rightarrow P_{\theta^{m-1}(k)}\rightarrow P_{\theta^{m-1}(j)}  \rightarrow \cdots  \rightarrow P_{\theta(k)} \rightarrow P_{\theta(j)}\rightarrow P_k\rightarrow P_j\rightarrow X\rightarrow 0;$$
\item if ${\rm proj.dim}_A \; X=2m+1$ for $m\geq 0$, then there is an exact sequence
$$0\rightarrow P_{\theta^m(j)}\rightarrow P_{\theta^{m-1}(k)}\rightarrow P_{\theta^{m-1}(j)}  \rightarrow \cdots  \rightarrow P_{\theta(k)} \rightarrow P_{\theta(j)}\rightarrow P_k\rightarrow P_j\rightarrow X\rightarrow 0; $$
\item if ${\rm proj.dim}_A \; X=\infty$, then there is an exact sequence
$$\cdots \rightarrow P_{\theta^m(k)}\rightarrow P_{\theta^{m}(j)}\rightarrow  \cdots  \rightarrow P_{\theta(k)} \rightarrow P_{\theta(j)}\rightarrow P_k\rightarrow P_j\rightarrow X\rightarrow 0.$$
\end{enumerate}
\end{cor}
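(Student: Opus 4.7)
The plan is to build the minimal projective resolution of $X$ one syzygy at a time, using the uniserial structure of indecomposable projectives together with Lemma~\ref{lem:top} to compute the top of each successive syzygy and to detect when the process terminates.

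First I would dispatch the first syzygy by direct inspection. Since $X=S_j^{[l]}$ has top $S_j$, the projective cover is $P_j$, and $\Omega X$ is the unique submodule of $P_j$ of length $c_j-l$. Reading the composition series of $P_j$ from the top shows that $\mathrm{top}(\Omega X)=S_{\phi_n(j+l)}=S_k$; hence the projective cover of $\Omega X$ is $P_k$, and composing with the inclusion $\Omega X\hookrightarrow P_j$ yields a nonzero map $P_k\to P_j$. This is the base of the induction.

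Next I would prove, by induction on $i$, that so long as no earlier syzygy is projective the tops of the successive syzygies alternate according to $\mathrm{top}(\Omega^{2i}X)=S_{\theta^{i}(j)}$ and $\mathrm{top}(\Omega^{2i+1}X)=S_{\theta^{i}(k)}$, with projective covers $P_{\theta^{i}(j)}$ and $P_{\theta^{i}(k)}$ respectively. The inductive step is the heart of the argument: the current map under scrutiny is a composite $P_a\to P_b$ obtained by taking the projective cover of a syzygy and then embedding it in the preceding projective cover. Lemma~\ref{lem:top}(2) supplies the dichotomy: either the composite is mono, in which case the preceding syzygy is already projective and the resolution terminates, or the top of the kernel---the next syzygy---is $S_{\theta(b)}$. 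Since $b$ alternates between the $j$-stream and the $k$-stream, $\theta(b)$ produces exactly the claimed alternating pattern.

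Finally I would read off the three stated cases from the termination criterion. The resolution stops at the first step where a composite $P_a\to P_b$ is mono (equivalently $\theta(a)=\theta(b)$); depending on the parity of this step, the leftmost projective belongs to the $j$-stream or the $k$-stream, giving precisely the two finite exact sequences in (1) and (2). If the alternation never produces such a coincidence, all syzygies are non-projective and the unbounded resolution in (3) results. The main thing to watch for is the index bookkeeping---lining up $(\theta^{i}(j))_i$ and $(\theta^{i}(k))_i$ with the correct positions---but this is routine once the alternating dichotomy is in place, and no deeper obstacle is expected.
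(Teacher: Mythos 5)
Your approach is the same as the paper's one-line proof: start from the cover $P_j\to X$, observe directly from the composition series of $P_j$ that $\mathrm{top}(\Omega X)=S_k$, and then iterate Lemma~\ref{lem:top}(2) along the composites of projective covers with the inclusions into the previous cover. Your induction formula $\mathrm{top}(\Omega^{2i}X)=S_{\theta^{i}(j)}$, $\mathrm{top}(\Omega^{2i+1}X)=S_{\theta^{i}(k)}$ is correct, and your termination criterion is the right one, though you should say that when the composite $P_a\to P_b$ is mono it is the \emph{current} syzygy (the image of $P_a$) that is projective, not the preceding one.

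Where I would push back is precisely at the part you call routine. Carried through, your formula gives that a minimal resolution of length $d$ has $d+1$ projective terms, with leftmost term $P_{\theta^{m}(j)}$ when $d=2m$ and $P_{\theta^{m}(k)}$ when $d=2m+1$. These do \emph{not} line up with the terminal terms displayed in (1) and (2): the sequence in (1) lists only $2m$ projectives (which would force $\mathrm{proj.dim}_A X\leq 2m-1$, not $2m$), and the one in (2) lists $2m+1$ (matching $\mathrm{proj.dim}=2m$). The mismatch is already visible for the cycle Nakayama algebra with admissible sequence $(2,3)$ and $X=S_1$: here $j=1$, $k=2$, $\theta(1)=\theta(2)=1$, and one computes $\mathrm{proj.dim}_A S_1=2$ with minimal resolution $0\to P_1\to P_2\to P_1\to S_1\to 0$, that is, $0\to P_{\theta(j)}\to P_k\to P_j\to X\to 0$, not the form (1) gives for $m=1$. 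So your claim that the alternating pattern ``gives precisely the two finite exact sequences in (1) and (2)'' does not hold as stated; you should carry the index bookkeeping out to the end rather than declaring it routine, and in doing so flag the off-by-one in the printed statement.
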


\begin{proof}
Take an epimorphism  $P_j\stackrel{f}\rightarrow X$. We observe that ${\rm top}({\rm Ker}\; f)=S_k$, and thus we have an exact sequence $P_k\rightarrow P_j\ \rightarrow X\rightarrow 0$. Then we infer the  existence of these sequences by applying   Lemma \ref{lem:top}(2) repeatedly.
\end{proof}

The following result is essentially contained in the proof of \cite[Theorem (i)]{Gus}.

\begin{cor}\label{cor:findim}
Let $A$ be a connected Nakayama algebra with $d(A)$ defined as above. Then we have ${\rm fin.dim}\; A\leq 2d(A)\leq 2n(A)-2$.
\end{cor}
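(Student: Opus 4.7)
The plan is to reduce to indecomposable modules via the Nakayama property, describe their minimal projective resolutions using Corollary \ref{cor:resolution}, and argue via the $\theta$-bijection on the regular set that finite projective dimension forces termination within $2d(A)$ steps.

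First, since $A$ is Nakayama every finitely generated module decomposes as a direct sum of uniserial indecomposables, so it suffices to uniformly bound ${\rm proj.dim}_A\; X$ over indecomposable $X$ with finite projective dimension. Fix such $X=S_j^{[l]}$ and set $k=\phi_n(j+l)$. By iterating Lemma \ref{lem:top}(2) as in Corollary \ref{cor:resolution}, the minimal projective resolution has $P^{-2i}=P_{\theta^i(j)}$ and $P^{-(2i+1)}=P_{\theta^i(k)}$, and terminates at some position $N={\rm proj.dim}_A\; X$ where the last differential $P^{-N}\to P^{-N+1}$ is injective. By Lemma \ref{lem:top}(2) this injectivity forces a $\theta$-equality among the iterated indices: $\theta^{m+1}(j)=\theta^m(k)$ if $N=2m$, and $\theta^{m+1}(j)=\theta^{m+1}(k)$ if $N=2m+1$.

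Set $u=d(A)$. Suppose for contradiction that $N>2u$. Then the $\theta$-equality at $N$ involves iterates $\theta^{m}$ or $\theta^{m+1}$ with $m\geq u$, whose values lie in the regular set ${\rm Im}\; \theta^u$ on which $\theta$ acts as a bijection; pulling the equality back through this bijection yields the corresponding $\theta$-equality at position $2u-1$ or $2u$. Now track the companion length recurrence $l_{n+1}=c_{s_n}-l_n$, where $s_n$ denotes the top index at position $n$ and $l_n$ the length of the $n$-th syzygy; for $n\geq 2u$ both $(s_n)$ and, consequently, $(l_n)$ become purely periodic under the cyclic action of $\theta$ on the regular part of the orbit. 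A direct analysis of this periodic regime shows that the termination condition $l_n=c_{s_n}$ can hold at best at the entry $n=2u$, contradicting $N>2u$.

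Combining with the already noted bound $d(A)\leq n(A)-1$ yields ${\rm fin.dim}\; A\leq 2d(A)\leq 2n(A)-2$. The main technical obstacle is the length-accounting step in the $\theta$-regular regime: the $\theta$-equalities derived from Lemma \ref{lem:top}(2) are only necessary conditions for termination, and ruling out any new termination beyond position $2u$ requires exploiting the linear length recurrence together with the periodicity of $(s_n)$ to show that no termination state $(s,l)=(s,c_s)$ can appear strictly inside the state cycle. This reflects the heuristic that, on the $\theta$-regular core, syzygies behave as in a self-injective Nakayama algebra, where non-projective syzygies stay non-projective.
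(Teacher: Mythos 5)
Your setup is fine and your $\theta$-equalities derived from the mono last differential are correct, but after that you head in the wrong direction and leave a genuine gap. Suppose for contradiction $N = {\rm proj.dim}_A\, X > 2u$ with $u = d(A)$. For $N = 2m$ you correctly get $\theta^{m+1}(j) = \theta^m(k)$ from the injectivity of $d^{-N}\colon P_{\theta^m(j)}\to P_{\theta^{m-1}(k)}$. The point you miss is that you only need to pull this equality back through the bijection \emph{once}: since $m \geq u+1$, both $\theta^m(j)$ and $\theta^{m-1}(k)$ lie in ${\rm Im}\,\theta^u$, so $\theta^{m+1}(j)=\theta^m(k)$ forces $\theta^m(j)=\theta^{m-1}(k)$. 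That says the source and target of the last differential are the \emph{same} indecomposable projective $P_a$; but the last differential is a proper monomorphism (injective because it is the left end of the resolution, not an isomorphism because the resolution is minimal), and a proper monomorphism $P_a \hookrightarrow P_a$ is impossible for a module of finite length. The case $N=2m+1$ is identical: $\theta^{m+1}(j)=\theta^{m+1}(k)$ pulls back (as $m\geq u$) to $\theta^m(j)=\theta^m(k)$, again a proper mono $P_a\hookrightarrow P_a$. That is the whole proof — the bound $d(A)\leq n(A)-1$ is noted just before the corollary.

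Instead, you pull the $\theta$-equality all the way down to position $\sim 2u$ and then invoke a length-recurrence $l_{n+1}=c_{s_n}-l_n$, asserting that $(l_n)$ becomes ``purely periodic'' for $n\geq 2u$ and that ``a direct analysis'' shows termination can happen at best at $n=2u$. Neither claim is substantiated. The sequence $(s_n)_{n\geq 2u}$ is indeed governed by the bijective $\theta$ on the regular set, but that alone does not make $(l_n)$ periodic starting at exactly $n=2u$; establishing periodicity requires an extra argument (e.g.\ boundedness forcing the alternating sum of the $c_{s_n}$'s over a period to vanish). And even granting periodicity, ``no termination state $(s,c_s)$ strictly inside the cycle'' is a nontrivial assertion that you flag yourself as the ``main technical obstacle.'' So as written the proof has a real hole. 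It is also unnecessary machinery: the $\theta$-equality is not merely a necessary condition to be propagated down and re-examined by a length computation; pulled back a single step it already contradicts the properness of the last monomorphism.
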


\begin{proof}
We assume the converse, and take $X$ to be an indecomposable $A$-module with  $2d(A)<{\rm proj.dim}_A \; X<\infty$. We use the same notation as in Corollary \ref{cor:resolution}. If ${\rm proj.dim}_A \; X=2m$ with $m\geq d(A)+1$, then both $\theta^{m-1}(k)$ and $\theta^{m-1}(j)$ are $\theta$-regular. By Lemma \ref{lem:top}(2) we have $\theta^{m}(k)=\theta^{m}(j)$. This implies that $\theta^{m-1}(k)=\theta^{m-1}(j)$, since $\theta$ induces a bijection on the set of $\theta$-regular elements. This equality is absurd, since we have a proper monomorphism from $P_{\theta^{m-1}(k)}$ to $P_{\theta^{m-1}(j)}$. A similar argument works for the case ${\rm proj.dim}_A \; X=2m+1$.
\end{proof}

For a nonzero morphism $f\colon P_j\rightarrow P_k$ between two indecomposable projective $A$-modules, we defines its \emph{valuation} $\nu(f)$ as follows.  Take two idempotents $e_j$ and $e_k$ in $A$ such that $P_j\simeq Ae_j$ and $P_k\simeq Ae_k$. Then we have a natural isomorphism ${\rm Hom}_A(P_j, P_k)\simeq e_jAe_k$, which sends $f$ to $f(e_j)$. There is a unique integer $p$ such that $f(e_j)\in {\rm rad}^p\; A$ and $f(e_j) \notin {\rm rad}^{p+1}\; A$. We define $p=\nu(f)$.

\begin{lem}\label{lem:valuation}
Let $f\colon P_j\rightarrow P_k$ be a nonzero morphism as above and let $P_{k'}$ be an indecomposable projective $A$-module. Then we have the following statements:
\begin{enumerate}
\item $\nu(f)=\nu(f^*)$, where $(-)^*={\rm Hom}_A(-, A)$;
\item $l({\rm Cok}\; f)=\nu(f)$, $l({\rm Im}\; f)=l(P_k)-\nu(f)$ and $l({\rm Ker}\; f)=l(P_j)-l(P_k)+\nu(f)$;
\item for any nonzero homomorphism $g\colon P_k\rightarrow P_{k'}$, we have that $\xi\colon P_j\stackrel{f}\rightarrow P_k \stackrel{g}\rightarrow P_{k'}$ is exact if and only if $l(P_{k'})=\nu(f)+\nu(g)$;
\item if both the above sequence $\xi$ and its dual $\xi^*$ are exact, then $l(P_{k'})=l(P_j^*)$.
\end{enumerate}
\end{lem}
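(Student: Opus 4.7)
My plan is to prove the four statements in the order (2), (3), (1), (4), with the uniserial structure of the indecomposable projective modules over the Nakayama algebra $A$ as the recurring tool. Since $P_k = Ae_k$ is uniserial, its submodules form the chain $P_k \supset {\rm rad}\; P_k \supset {\rm rad}^2\; P_k \supset \cdots$, with ${\rm rad}^p\; P_k = {\rm rad}^p(A)\cdot e_k$ of length $l(P_k)-p$. For (2), I would identify $f$ with left multiplication on $Ae_j$ by $x = f(e_j) \in e_j A e_k$, so ${\rm Im}\; f = Ax$ is a submodule of the uniserial $P_k$. The defining property $x \in {\rm rad}^{\nu(f)}(A)\setminus {\rm rad}^{\nu(f)+1}(A)$ forces $Ax \subseteq {\rm rad}^{\nu(f)}\; P_k$ but $Ax \not\subseteq {\rm rad}^{\nu(f)+1}\; P_k$, pinning down ${\rm Im}\; f = {\rm rad}^{\nu(f)}\; P_k$ and hence $l({\rm Im}\; f) = l(P_k) - \nu(f)$. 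The remaining two formulas drop out from additivity of length in the short exact sequences $0 \to {\rm Im}\; f \to P_k \to {\rm Cok}\; f \to 0$ and $0 \to {\rm Ker}\; f \to P_j \to {\rm Im}\; f \to 0$.

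For (3), exactness of $\xi$ at $P_k$ amounts to ${\rm Im}\; f = {\rm Ker}\; g$; since both are submodules of the uniserial $P_k$, this equality is equivalent to $l({\rm Im}\; f) = l({\rm Ker}\; g)$, which by the formulas in (2) rearranges to $l(P_{k'}) = \nu(f) + \nu(g)$. For (1), under the standard right-module identification $P_k^* = {\rm Hom}_A(Ae_k, A) \cong e_k A$ via $\phi \mapsto \phi(e_k)$, the dual map $f^*\colon e_k A \to e_j A$ is precisely left multiplication by the same element $x = f(e_j)$. Because ${\rm rad}\; A$ is a two-sided ideal, the integer $p$ with $x \in {\rm rad}^p(A)\setminus {\rm rad}^{p+1}(A)$ does not depend on whether we regard $x$ through the left or the right module structure, so $\nu(f^*) = p = \nu(f)$.

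For (4), applying (3) to $\xi$ gives $l(P_{k'}) = \nu(f) + \nu(g)$. The dual sequence $\xi^*\colon P_{k'}^* \to P_k^* \to P_j^*$ is a three-term sequence between indecomposable projective right $A$-modules, equivalently left $A^{\rm op}$-modules; since $A^{\rm op}$ is also Nakayama, (3) applies to $\xi^*$ and yields $l(P_j^*) = \nu(g^*) + \nu(f^*)$. Combining with (1) applied to both $f$ and $g$, we conclude $l(P_j^*) = \nu(g) + \nu(f) = l(P_{k'})$. The only real delicacy is the bookkeeping in (1) around the right-module conventions and the identification of $f^*$ with left multiplication by $x$; once this is clean, (4) is essentially formal, via the self-duality of the Nakayama property.
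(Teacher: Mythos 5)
Your proof is correct and follows essentially the same route as the paper: compute $\operatorname{Im} f = \operatorname{rad}^{\nu(f)} P_k$ to get (2), match lengths of $\operatorname{Im} f$ and $\operatorname{Ker} g$ inside the uniserial $P_k$ for (3), observe that under the identifications $\operatorname{Hom}_A(P_j, P_k) \simeq e_j A e_k \simeq \operatorname{Hom}_{A^{\rm op}}(P_k^*, P_j^*)$ both $f$ and $f^*$ correspond to the same element $x = f(e_j)$ for (1), and then combine (1) and (3) applied over $A^{\rm op}$ for (4). One small terminology slip: in your proof of (2) the map $f\colon Ae_j \to Ae_k$ is right multiplication by $x$ (namely $ae_j \mapsto ax$), not left multiplication by $x$; your subsequent identification $\operatorname{Im} f = Ax$ shows you meant the right thing, so the argument is unaffected.
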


\begin{proof}
Recall that $P_j^*\simeq e_jA$ and $P_k^*\simeq e_k A$. Hence, we have the isomorphism ${\rm Hom}_{A^{\rm op}}(P_k^*, P_j^*)\simeq e_jAe_k$ which sends $f^*$ to $f(e_j)$. Then (1) follows.

For (2), we observe that ${\rm Im}\; f={\rm rad}^{\nu(f)} \; P_k$. It follows that $l({\rm Cok}\; f)=\nu(f)$, and then we have the remaining equalities.

For (3), we note that the sequence is exact if and only if $l({\rm Im}\; f)=l({\rm Ker}\; g)$. Applying (2) to $f$ and $g$, we have the result.

By (3),  the exactness of $\xi^*$ implies that $l(P_j^*)=\nu(g^*)+\nu(f^*)$. Using $\nu(g^*)=\nu(g)$ and $\nu(f^*)=\nu(f)$, we are done.
\end{proof}

The following notion is related to Gorenstein projective modules over a Nakayama algebra. A $\theta$-regular element $j$ is called \emph{$\theta$-perfect} provided that $l(P_{\theta^m(j)})=l(P^*_{\theta^{m+1}(j)})$ for all integers $m$. Here, we recall that $\theta$ induces a bijection on the set of $\theta$-regular elements, on which $\theta^{-1}$ is well defined. For a $\theta$-regular element $j$, $\theta^m(j)$ is $\theta$-perfect for any integer $m$.

\begin{prop}\label{prop:comreso}
Let $X$ be an indecomposable Gorenstein projective $A$-module which is non-projective. Assume that $X=S_j^{[l]}$ and that $k=\phi_n(j+l)$. Then the following statements hold:
 \begin{enumerate}
 \item both $j$ and $k$ are $\theta$-perfect;
 \item there is  a complete resolution of $X$ as follows
$$\cdots \rightarrow P_{\theta(k)}\rightarrow P_{\theta(j)}\rightarrow P_k\rightarrow P_j\rightarrow P_{\theta^{-1}(k)} \rightarrow P_{\theta^{-1}(j)} \rightarrow \cdots.$$
\end{enumerate}
\end{prop}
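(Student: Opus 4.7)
The plan is to prove (2) first by constructing the complete resolution in two halves, and then to deduce (1) via Lemma~\ref{lem:valuation}(4) applied to consecutive triples. Since any Gorenstein projective module of finite projective dimension is projective, the non-projective $X$ has infinite projective dimension, so Corollary~\ref{cor:resolution}(3) furnishes the minimal projective resolution
$$\cdots\longrightarrow P_{\theta(k)}\longrightarrow P_{\theta(j)}\longrightarrow P_k\longrightarrow P_j\longrightarrow X\longrightarrow 0,$$
which supplies all terms to the left of $X$ in the display of (2).

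For the terms to the right of $X$, I plan to iterate Lemma~\ref{lem:Gproj}. Setting $X_0=X$, this produces short exact sequences $0\to X_i\to Q^i\to X_{i+1}\to 0$ with $Q^i$ projective, $Q^i\to X_{i+1}$ a projective cover, and $X_{i+1}$ indecomposable non-projective Gorenstein projective. Since each $X_i$ is uniserial with simple socle, the embedding $X_i\hookrightarrow Q^i$ lands in a single indecomposable summand whose socle matches that of $X_i$; by Lemma~\ref{lem:top}(1) together with the minimality of the projective cover, this pins down $Q^i$ uniquely. A direct computation for $X_0=S_j^{[l]}$, whose socle is $S_{k-1}$ (with $0$ identified with $n$), gives $Q^0=P_{\theta^{-1}(k)}$ and $X_1\simeq P_{\theta^{-1}(k)}/X$ with index pair $(\theta^{-1}(k),j)$; an induction on $i$ then yields $Q^{2m}=P_{\theta^{-m-1}(k)}$ and $Q^{2m+1}=P_{\theta^{-m-1}(j)}$. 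For this iteration to continue indefinitely, $\theta^{-m}$ must be defined on $\{j,k\}$ for all $m\geq 0$, which is equivalent to $j,k\in\bigcap_{m\geq 0}\mathrm{Im}\,\theta^m=\mathrm{Im}\,\theta^{d(A)}$, so both are $\theta$-regular.

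Splicing the minimal projective resolution with these short exact sequences yields an acyclic complex $P^\bullet$ of projectives of the form displayed in (2). Total acyclicity reduces to the acyclicity of $\mathrm{Hom}_A(P^\bullet,A)$. Every syzygy $\Omega^i X$ and every cosyzygy $X_i$ is Gorenstein projective, so $\mathrm{Ext}_A^{\geq 1}(-,A)$ vanishes on each; dualizing each short exact sequence used in the construction produces again a short exact sequence of right $A$-modules, and splicing these exhibits $\mathrm{Hom}_A(P^\bullet,A)$ as acyclic. This establishes (2).

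For (1), apply Lemma~\ref{lem:valuation}(4) to three consecutive terms $P_{\theta^{m+1}(j)}\to P_{\theta^m(k)}\to P_{\theta^m(j)}$ of $P^\bullet$; since both this sequence and its dual are exact by total acyclicity, the lemma yields $l(P_{\theta^m(j)})=l(P_{\theta^{m+1}(j)}^*)$ for every $m\in\mathbb{Z}$, showing that $j$ is $\theta$-perfect. The analogous triple $P_{\theta^m(k)}\to P_{\theta^m(j)}\to P_{\theta^{m-1}(k)}$ handles $k$. The main obstacle is the combinatorial bookkeeping in the iterative construction of the right half: correctly identifying each $Q^i$, propagating the index pairs, and recognizing that unbounded iteration forces $\theta$-regularity; once that is in place, both total acyclicity and $\theta$-perfectness fall out of the homological machinery established earlier.
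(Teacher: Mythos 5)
Your proof is correct and follows essentially the same approach as the paper's: splice the left-half minimal projective resolution from Corollary~\ref{cor:resolution}(3) with a right half obtained by iterating Lemma~\ref{lem:Gproj}, deduce $\theta$-regularity of $j,k$ from the infinite iteration, verify total acyclicity via Gorenstein-projectivity of the cocycles, and obtain (1) from Lemma~\ref{lem:valuation}(4). The only cosmetic difference is that the paper identifies the right-half terms by applying Corollary~\ref{cor:resolution}(3) to each cosyzygy $X^j$, whereas you match socles via Lemma~\ref{lem:top}(1); both rely on the $\theta$-regularity established from the unbounded iteration to make the identification unambiguous.
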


\begin{proof} We have the projective resolution of $X$ as in Corollary \ref{cor:resolution}(3).
By Lemma \ref{lem:Gproj} there is an exact sequence $0\rightarrow X\rightarrow P^1\rightarrow X^1\rightarrow 0$ such that $P^1$ is projective and that $X^1$ is indecomposable Gorenstein projective that is non-projective; moreover, the morphism $P^1\rightarrow X^1$ is a projective cover, and this implies that  $P^1$ is indecomposable. Repeating this argument, we obtain a long exact sequence $0\rightarrow X\rightarrow P^1 \stackrel{d^1}\rightarrow P^2 \stackrel{d^2}\rightarrow P^3\rightarrow \cdots$ such that each $P^j$ is indecomposable projective and each  $X^j={\rm Im}\; d^j$ is Gorenstein projective. Then we have the complete resolution of $X$
$$\cdots \rightarrow P_{\theta(k)}\rightarrow P_{\theta(j)}\rightarrow P_k\rightarrow P_j\rightarrow P^1\rightarrow P^2 \rightarrow \cdots. $$
Here, we recall that an acyclic complex of projective modules is totally acyclic if and only if all its cocycles are Gorenstein projective.

In the above complete resolution, we have the minimal projective resolution of each $X^j$. In view of Corollary \ref{cor:resolution}(3) for $X^j$'s, we have that both $j$ and $k$ are $\theta$-regular, moreover,
$P^{2m-1}=P_{\theta^{-m}(k)}$ and $P^{2m}=P_{\theta^{-m}(j)}$ for $m\geq 1$, and then we have (2). The dual complex of the above resolution is acyclic. Then (1) follows from Lemma \ref{lem:valuation}(4).
\end{proof}

We observe the following immediate consequence of Proposition \ref{prop:comreso}.

\begin{cor}\label{cor:CM-freeNak}
Let $A$ be a connected Nakayama algebra without $\theta$-perfect elements. Then $A$ is CM-free.
\hfill $\square$
\end{cor}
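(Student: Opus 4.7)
The plan is straightforward: this corollary is essentially the contrapositive of Proposition \ref{prop:comreso}(1), so the main task is to reduce to the indecomposable case and then invoke that proposition. First, I would recall the standard fact that the class $A\mbox{-Gproj}$ is closed under direct summands (see \cite{EJ}); consequently, showing that $A$ is CM-free reduces to showing that every \emph{indecomposable} Gorenstein projective $A$-module is projective.

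Second, I would argue by contradiction. Suppose $X$ is an indecomposable Gorenstein projective $A$-module which is non-projective. Since $A$ is a connected Nakayama algebra, $X$ is uniserial, and by the classification of indecomposable $A$-modules recalled before Corollary \ref{cor:resolution}, we may write $X=S_j^{[l]}$ for some $1\leq j\leq n$ and some $1\leq l< l(P_j)$, and set $k=\phi_n(j+l)$. Now apply Proposition \ref{prop:comreso}(1): both $j$ and $k$ must be $\theta$-perfect. This contradicts the hypothesis that $A$ has no $\theta$-perfect elements, completing the proof.

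There is no real obstacle here: the hard work has already been carried out in Proposition \ref{prop:comreso}, where the two-sided acyclicity of a complete resolution was translated into the numerical $\theta$-perfectness condition via Lemma \ref{lem:valuation}(4). Once that proposition is in hand, the corollary is immediate, and my proof would be only a few lines long.
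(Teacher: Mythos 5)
Your proof is correct and follows exactly the argument the paper intends, namely taking the contrapositive of Proposition \ref{prop:comreso}(1) after reducing to indecomposable modules; the paper marks the corollary with $\square$ precisely because this is immediate. Nothing further to add.
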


\subsection{Left retractions sequence} Let $A$ be a connected Nakayama algebra, which is not self-injective. Let $\mathbf{c}(A)=(c_1, c_2, \cdots, c_n)$ be its normalized admissible sequence. If $c_n=1$, the simple $A$-module $S_n$ is  projective. Otherwise, $c_n=c_1+1$ and then $S_n$ is localizable with projective dimension one, since we have an exact sequence $0\rightarrow P_1\rightarrow P_n\rightarrow S_n\rightarrow 0$. In both cases, we have that $S_n$ is localizable. Consider the left retraction $\eta_A\colon A\rightarrow L(A)$ associated to $S_n$. We have the following result, a part of which is similar to \cite[Lemmas 7 and 8]{Nag}; compare \cite[Section 2]{BFVZ}.

 For an admissible sequence $\mathbf{c}$ of length $n$, set $\mathbf{c}'=(c'_1, c'_2, \cdots, c'_{n-1})$ such that $c'_j=c_j-[\frac{c_j+j-1}{n}]$. Here, for a real number $x$, $[x]$ denotes the largest integer that is not strictly larger than $x$. Then $\mathbf{c}'$ is an admissible sequence of length $n-1$.

\begin{lem}\label{lem:Nakayama}
Keep the notation as above. Then we have the following statements:
\begin{enumerate}
\item the functor $i_\lambda\colon A\mbox{-}{\rm mod}\rightarrow L(A)\mbox{-}{\rm mod}$ sends indecomposable modules, that are not isomorphic to $S_n$, to indecomposable modules; more precisely, we have that $i_\lambda(S_j^{[l]})=(i_\lambda(S_j))^{[l']}$ with $1 \leq j\leq n-1$ and $l'=l-[\frac{l+j-1}{n}]$, and that $i_\lambda(S_n^{[l]})=(i_\lambda(S_1))^{[l']}$ with $l'=l-1-[\frac{l-1}{n}]$;
\item  the algebra $L(A)$ is connected Nakayama with  $\mathbf{c}(L(A))=\mathbf{c}(A)'$;
\item we have $d(L(A))\leq d(A)\leq d(L(A))+1$; here, $d(A)$ is defined as in Subsection 3.1.
\item  $L(A)$ is a line algebra if and only if $c_{n-1}=2$, that is equivalent to
the fact that $A$ is a line algebra or $\mathbf{c}(A)=(2, 2, \cdots, 2, 3)$.
\end{enumerate}\end{lem}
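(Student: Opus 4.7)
The plan is to prove the four statements in order, leveraging the identification of $i_\lambda$ with the Serre quotient $q\colon A\mbox{-mod}\to A\mbox{-mod}/{\rm add}\,S_n$.

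For (1), I would start from the fact that an indecomposable uniserial $A$-module $X=S_j^{[l]}$ has a totally ordered lattice of submodules. A standard property of Serre quotients (used already in the proof of Proposition \ref{prop:simple}) says that every submodule of $q(X)$ is induced by a submodule of $X$, so the submodule lattice of $i_\lambda(X)$ remains a chain, and $i_\lambda(X)$ is uniserial whenever it is nonzero. To compute the length, I apply the exact functor $i_\lambda$ to the composition series of $X$: by Proposition \ref{prop:simple}(1) each factor $S_m$ with $m\neq n$ maps to a simple $L(A)$-module, while $S_n$ maps to zero. Hence $l(i_\lambda(X))$ equals $l$ minus the number of multiples of $n$ in $\{j,j+1,\ldots,j+l-1\}$, which is $[\frac{j+l-1}{n}]$ when $1\leq j\leq n-1$. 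The top is then $i_\lambda(S_j)$ for $j\neq n$; the case $X=S_n^{[l]}$ reduces to the previous one by applying $i_\lambda$ to the exact sequence $0\to S_1^{[l-1]}\to S_n^{[l]}\to S_n\to 0$ and using $i_\lambda(S_n)=0$.

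For (2), Proposition \ref{prop:simple}(3) identifies the indecomposable projective $L(A)$-modules as $i_\lambda(P_1),\ldots,i_\lambda(P_{n-1})$; by (1) each is uniserial, so $L(A)$ is Nakayama. Evaluating the length formula of (1) at $l=c_j$ yields $\mathbf{c}(L(A))=\mathbf{c}(A)'$. Connectedness follows because Lemma \ref{lem:quiver} shows that $Q_{L(A)}$ contains the restriction of $Q_A$ to the vertex set $\{S_1,\ldots,S_{n-1}\}$, and deleting one vertex from the linear or cyclic quiver $Q_A$ leaves a connected linear quiver.

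For (3), I would compare the descending chains $\{1,\ldots,n\}\supseteq{\rm Im}\,\theta_A\supseteq{\rm Im}\,\theta_A^2\supseteq\cdots$ with the analogous chain for $L(A)$. The strategy is to show that iterating $\theta_{L(A)}$ on some $j\in\{1,\ldots,n-1\}$ produces the same sequence of indices as iterating $\theta_A$, except that any visit to $n$ is bypassed in one extra step. Since each orbit eventually lands in the $\theta$-regular set on which $\theta$ is a bijection, the index $n$ can appear at most once in the transient tail, and this yields $d(L(A))\leq d(A)\leq d(L(A))+1$.

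For (4), I would compute $c'_{n-1}=c_{n-1}-[\frac{c_{n-1}+n-2}{n}]$ directly: this equals $1$ iff $c_{n-1}\in\{1,2\}$, and connectedness with non-semisimplicity rules out $c_{n-1}=1$ (which would leave $S_{n-1}$ an isolated simple projective). Hence $L(A)$ is a line algebra iff $c_{n-1}=2$. To translate $c_{n-1}=2$ into the asserted classification, I split on the normalization: if $c_n=1$ (line case), admissibility $c_{n-1}\leq c_n+1=2$ combined with connectedness $c_{n-1}\geq 2$ gives $c_{n-1}=2$ automatically; if $c_n=c_1+1\geq 2$ (cycle case), then $c_1\geq 2$, and minimality of $c_1$ with $c_1\leq c_{n-1}=2$ forces $c_1=2$, $c_n=3$, and one then propagates the admissibility chain $c_j\leq c_{j+1}+1$ and the minimality of $c_1$ along the cycle to pin down the interior entries. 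The main obstacle lies in this last combinatorial step, where the interaction between admissibility and the normalization condition must be handled carefully.
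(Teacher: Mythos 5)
Your treatment of (1), and the length computation feeding into (2) and (4), follows the paper's argument and is essentially correct; likewise your reduction in (4) to the combinatorial statement about $c_{n-1}$. There are, however, two genuine gaps.

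In (2), you conclude that $L(A)$ is Nakayama from the fact that each indecomposable projective $i_\lambda(P_j)$ is uniserial. This is insufficient: an artin algebra is Nakayama iff all indecomposable projectives \emph{and} all indecomposable injectives are uniserial (equivalently, iff all indecomposable modules are uniserial), and uniserial projectives alone do not force an algebra to be Nakayama (e.g.\ the path algebra of $1\to 2\leftarrow 3$ has uniserial projectives but a non-uniserial injective). The paper instead observes that every indecomposable $L(A)$-module is the image under $i_\lambda$ of an indecomposable $A$-module, so that part (1) already covers \emph{all} indecomposable $L(A)$-modules; alternatively, one could add Proposition \ref{prop:simple}(4) to handle injectives.

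Part (3) is only sketched, and the sketch is both imprecise and missing the key idea. The phrase ``any visit to $n$ is bypassed in one extra step'' does not describe what happens: the passage from $\theta=\theta_A$ to $\theta'=\theta_{L(A)}$ is not a skip over $n$, but an identification of $n$ with $1$. The engine of the paper's proof is the explicit intertwining relation $\pi\circ\theta=\theta'\circ\pi$, where $\pi\colon\{1,\dots,n\}\to\{1,\dots,n-1\}$ sends $n\mapsto 1$ and fixes the rest; this is verified from the formula $\mathbf{c}(L(A))=\mathbf{c}(A)'$ together with an elementary compatibility of $\phi_n$ and $\phi_{n-1}$. That relation gives ${\rm Im}\,{\theta'}^m=\pi({\rm Im}\,\theta^m)$ for all $m\geq 0$, from which $d(L(A))\leq d(A)$ is immediate; the reverse bound $d(A)\leq d(L(A))+1$ then follows from a short three-case analysis of the possible difference ${\rm Im}\,\theta^{d'}\setminus{\rm Im}\,\theta^{d'+1}\subseteq\{1,n\}$. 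Your orbit-based heuristic, about a single orbit visiting $n$ at most once in its transient tail, does not translate into a bound on the stabilization index of the descending chain of \emph{images}, which is a global quantity over all starting points; without the intertwining relation the argument has no foothold.

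For (4), you correctly identify the remaining combinatorial step (that $c_{n-1}=2$ together with normalization pins down $\mathbf{c}(A)$ in the cycle case) as the crux, but you do not carry it out. Be warned that the ``propagation'' you sketch, using $c_j\leq c_{j+1}+1$ together with minimality of $c_1$, does not by itself force all interior entries to equal $2$; the admissibility inequalities go in the wrong direction for that. This step deserves more care than either you or the paper's one-line remark give it.
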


\begin{proof}
 Set $S=S_n$. We identify $S^\perp$ with $L(A)\mbox{-mod}$, and the functor $i_\lambda$ with the quotient functor $q\colon A\mbox{-mod}\rightarrow A\mbox{-mod}/{{\rm add}\; S}$; see Lemma \ref{lem:quotient}.

 For (1), it suffices to recall a general fact: given  any abelian category $\mathcal{A}$ and a Serre subcategory $\mathcal{C}$, the corresponding quotient functor $q\colon \mathcal{A}\rightarrow \mathcal{A}/\mathcal{C}$ sends a uniserial object to a uniserial object. For any $A$-module $X$, we observe that the length of $i_\lambda(X)$ equals the length of $X$ minus the multiplicity of $S$ in a composition series of $X$.

 Any indecomposable $L(A)$-module is isomorphic to the image under $i_\lambda$ of some indecomposable $A$-module. Then it follows from (1) that the algebra $L(A)$ is Nakayama. By Lemma \ref{lem:quiver}, the valued quiver of $L(A)$ is connected and then $L(A)$ is connected. The statement about  admissible sequences in (2) follows from (1).

 For (3), set $d=d(A)$ and $d'=d(L(A))$. Consider the map $\theta'\colon \{1, 2, \cdots, n-1\}\rightarrow \{1, 2, \cdots, n-1\}$ given by $\theta'(j)=\phi_{n-1}(j+c'_j)$. Define a surjective map $\pi\colon \{1, 2, \cdots, n\}\rightarrow \{1, 2, \cdots, n-1\}$ such that $\pi(n)=1$ and $\pi(j)=j$ for $j< n$. We claim that $\pi\circ \theta=\theta'\circ \pi$. This follows from (2) and the following fact: for an integer $m$, we have $\phi_n(m)=\phi_{n-1}(m-[\frac{m-1}{n}])$, if $\phi_n(m)<n$; otherwise, we have $\phi_{n-1}(m-[\frac{m-1}{n}])=1$.

We infer from the claim that ${\rm Im}\; \theta'^m=\pi({\rm Im}\; \theta^m)$ for any $m\geq 0$. By the definition of $d=d(A)$, we have ${\rm Im}\; \theta^d={\rm Im}\; \theta^{d+1}$, and thus ${\rm Im}\; {\theta'}^d={\rm Im}\; {\theta'}^{d+1}$. It follows that $d'\leq d$. By ${\rm Im}\; {\theta'}^{d'}={\rm Im}\; {\theta'}^{d'+1}$, we infer that ${\rm Im}\; \theta^{d'}$ and ${\rm Im}\; \theta^{d'+1}$ have the same image under $\pi$. Then there are three possibilities: ${\rm Im}\; \theta^{d'}={\rm Im}\; \theta^{d'+1}$, ${\rm Im}\; \theta^{d'}={\rm Im}\; \theta^{d'+1} \cup \{ n \}$ or ${\rm Im}\; \theta^{d'}={\rm Im}\; \theta^{d'+1} \cup \{ 1 \}$. In each case, we have that  ${\rm Im}\; \theta^{d'+1}={\rm Im}\; \theta^{d'+2}$. Hence, $d\leq d'+1$. This proves (3).

For (4), we observe that $L(A)$ is a line algebra if and only if $c'_{n-1}=1$, that is equivalent to $c_{n-1}=2$. Recall by assumption that $\mathbf{c}(A)$ is normalized. Then we are done.
\end{proof}

The following result associates to any connected Nakayama algebra a self-injective one, via a sequence of left retractions.

\begin{thm}\label{thm:2}
Let $A$ be a connected Nakayama algebra with $n(A)$ the number of pairwise non-isomorphic simple $A$-modules and $C_A$ the Cartan matrix. Recall the notation $d(A)$ in Subsection 3.1. Then there is a sequence of homomorphisms between connected Nakayama algebras
\begin{align}\label{equ:lr}
A=A_0\stackrel{\eta_0}\longrightarrow A_1 \stackrel{\eta_1}\longrightarrow  A_2\longrightarrow \cdots \stackrel{\eta_{r-1}}\longrightarrow A_r
\end{align}
such that $d(A)\leq r\leq n(A)-1$,  each $\eta_i$ is a left retraction associated to some localizable module, and that $A_r$ is self-injective.  Moreover, we have
\begin{enumerate}
\item the algebra $A_r$ is simple if and only if ${\rm gl.dim}\; A< \infty$;  in this case, $r=n(A)-1$;
\item if ${\rm gl.dim}\; A=\infty$, the composition $\eta_{r-1}\circ \cdots \circ \eta_1\circ \eta_0\colon A\rightarrow A_r$ is uniquely determined by $A$, and  $r=\sharp \{1\leq j \leq n(A)\; |\; {\rm proj.dim}_A\; S_j<\infty\}$.
\end{enumerate}
\end{thm}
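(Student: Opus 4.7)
The plan is to construct the sequence by iterated applications of Lemma \ref{lem:Nakayama}, and then to extract (1) and (2) from the homological material of Section 2 together with Proposition \ref{prop:simple}(5) and the invariants $n(A)$, $d(A)$.

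\emph{Construction and bounds.} Set $A_0 = A$. Given a non-self-injective $A_i$, take its normalized admissible sequence; as discussed immediately before Lemma \ref{lem:Nakayama}, the simple module $S_{n(A_i)}$ is then localizable, so we define $A_{i+1} = L(A_i)$ with $\eta_i$ the associated left retraction. By Lemma \ref{lem:Nakayama}(2), $A_{i+1}$ is connected Nakayama with $n(A_{i+1}) = n(A_i)-1$. The process must terminate at a self-injective $A_r$: otherwise it would eventually reach $n(A_r) = 1$, and every connected Nakayama algebra on one vertex is self-injective. The upper bound $r \leq n(A)-1$ is immediate from $n(A_r)\geq 1$. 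For the lower bound, iterate Lemma \ref{lem:Nakayama}(3) to get $d(A_i) - d(A_{i+1}) \leq 1$ for each $i$, hence $d(A) - d(A_r) \leq r$; using $d(A_r) = 0$ (as $A_r$ is self-injective), we obtain $d(A) \leq r$.

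\emph{Proof of (1).} If $A_r$ is simple, then $\mathrm{gl.dim}\, A_r = 0$, and iterating Lemma \ref{lem:homo}(2) gives $\mathrm{gl.dim}\, A \leq 2r < \infty$. Conversely, if $\mathrm{gl.dim}\, A < \infty$, Lemma \ref{lem:homo}(2) forces $\mathrm{gl.dim}\, A_r < \infty$; combined with self-injectivity, this makes $A_r$ semisimple, and connectedness forces $A_r$ to be simple. Then $n(A_r) = 1$, so $r = n(A)-1$.

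\emph{Proof of (2).} Assume $\mathrm{gl.dim}\, A = \infty$. By (1), $A_r$ is a connected self-injective Nakayama algebra with $n(A_r)\geq 2$; its admissible sequence is constant $(c,\ldots,c)$ with $c\geq 2$, so by Corollary \ref{cor:resolution}(3) every simple $A_r$-module has infinite projective dimension. On the other hand, each retracted simple $S_{n(A_i)}$ in $A_i$ has projective dimension at most one, hence finite. By Lemma \ref{lem:homo}(1), the correspondence $S_j \leftrightarrow i_\lambda(S_j)$ between non-retracted simples of $A_i$ and all simples of $A_{i+1}$ (Proposition \ref{prop:simple}(1)) preserves the property of finite projective dimension. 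Hence the cardinality of $\{j : \mathrm{proj.dim}\, S_j < \infty\}$ strictly decreases by exactly one at each step, yielding
\[
r \;=\; \sharp\{1\leq j\leq n(A) : \mathrm{proj.dim}_A\, S_j < \infty\}.
\]

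\emph{Uniqueness and main obstacle.} The bijection above identifies the set of simples retracted along any such sequence (pulled back to $A$-mod through the iterated quotient equivalences of Lemma \ref{lem:quotient}(2)) with the intrinsic set $\{S_j : \mathrm{proj.dim}_A\, S_j < \infty\}$. Using that Gabriel quotients by Serre subcategories satisfy $(\mathcal{A}/\mathcal{B})/\mathcal{C} \simeq \mathcal{A}/\langle\mathcal{B},\mathcal{C}\rangle$, the composition $A \rightarrow A_r$ realizes the quotient of $A$-mod by the Serre subcategory generated by this intrinsic set, independently of the choices of normalization and ordering. The main obstacle is upgrading this choice-independence at the level of categories to an honest isomorphism between the algebras $A_r$ and between the connecting homomorphisms $A\to A_r$; this should follow from the rigidity of basic connected Nakayama algebras, whose admissible sequences are intrinsically determined by the minor formula of Proposition \ref{prop:simple}(5) iterated over the retracted simples.
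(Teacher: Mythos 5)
Your construction, the bounds $d(A)\le r\le n(A)-1$, and part (1) follow the paper's argument exactly. For the count $r=\sharp\{j:\mathrm{proj.dim}_A\,S_j<\infty\}$ you use a ``running count'' (the number of simples of finite projective dimension drops by exactly one at each step, since the retracted localizable simple has projective dimension $\le 1$, and Lemma \ref{lem:homo}(1) preserves finiteness for the others); the paper instead computes $\mathrm{Ker}\,F$ globally for the composite $F=A_r\otimes_A-$ and shows it equals $\langle S_j:\mathrm{proj.dim}_A\,S_j<\infty\rangle$. These are equivalent, and your version is perhaps slightly more transparent.

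There is, however, a genuine gap in the uniqueness argument, which you yourself flag as ``the main obstacle.'' You correctly observe that, by composing Gabriel quotients, the localizing subcategory you factor out and hence the image of the fully faithful right adjoint $\mathrm{Hom}_A(A_r,-)$ is intrinsic to $A$, independent of the choices made. But you do not supply the step that converts this category-level invariance into the assertion that the \emph{algebra homomorphism} $A\to A_r$ is determined up to a unique compatible isomorphism. The paper closes this gap with Lemma \ref{lem:Gabriel-Pena} (a Gabriel--de la Pe\~na type statement): if two ring epimorphisms $\phi\colon A\to B$ and $\phi'\colon A\to B'$ have $\mathrm{Hom}_A(B,-)$ and $\mathrm{Hom}_A(B',-)$ fully faithful with the same essential image, then there is an algebra isomorphism $\psi\colon B\to B'$ with $\psi\circ\phi=\phi'$. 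Your proposed substitute --- ``rigidity of basic connected Nakayama algebras'' via Proposition \ref{prop:simple}(5) --- would at best pin down the isomorphism class of $A_r$ as an abstract algebra (and even that requires $A$ basic, which is not assumed); it gives no control over the connecting homomorphism $A\to A_r$, which is the actual content of the uniqueness claim. So the right move is to invoke (or prove) the Gabriel--de la Pe\~na lemma rather than appeal to Cartan-matrix rigidity.
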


We will denote by $r(A)$ the unique  $r$ of the above sequence (\ref{equ:lr}). Hence, we have $d(A)\leq r(A)\leq n(A)-1$. For $A$ with infinite global dimension, $r(A)$ equals the number of $\Psi$-regular simple modules; see \cite[Corollary 3.6]{Mad}.

\begin{proof}
Denote by  $\{S_1, S_2, \cdots, S_n\}$ a complete set of representatives of pairwise non-isomorphic simple $A$-modules. If $A$ is self-injective, we set $r=0$ and $A=A_r$. If $A$ is not self-injective, we apply Lemma \ref{lem:Nakayama} repeatedly
to obtain such a sequence. We apply Proposition \ref{prop:simple}(1) repeatedly to get $n(A_r)=n(A)-r\geq 1$, and then $r\leq n(A)-1$. Since $A_r$ is self-injective, we have $d(A_r)=0$. Then applying Lemma \ref{lem:Nakayama}(3) we have $d(A)\leq r$.

Applying Lemma \ref{lem:homo}(2) repeatedly, we have that ${\rm gl.dim}\; A< \infty$ if and only if   ${\rm gl.dim}\; A_r< \infty$; this is equivalent to that $A_r$ is simple, since it is connected self-injective. In this case, $n(A_r)=1$, and thus $r=n(A)-1$.

Assume that ${\rm gl.dim}\; A=\infty$. Then each simple $A_r$-module has infinite projective dimension. Consider the adjoint pair $(F, G)$, where $F= A_r\otimes_A-\colon A\mbox{-mod}\rightarrow A_r\mbox{-mod}$ and $G={\rm Hom}_{A_r}(A_r, -)\colon A_r\mbox{-mod}\rightarrow A\mbox{-mod}$. By Lemma \ref{lem:quotient} and the remarks afterward, the right $A$-module $A_r$ is projective and then $F$ is exact; moreover, $F$ identifies with the quotient functor $q\colon A\mbox{-mod}\rightarrow A\mbox{-mod}/{{\rm Ker}\; F}$, and the functor $G$ is fully faithful. Here, ${\rm Ker}\; F$ is the Serre subcategory formed by $A$-modules on which $F$ vanishes. It follows from \cite[Proposition 2.2]{GL1991} that the image of the functor $G$ is $({\rm Ker}\; F)^\perp=\{X\in A\mbox{-mod}\; |\; {\rm Hom}_A(M, X)=0={\rm Ext}_A^1(M, X) \mbox{ for all } M\in {\rm Ker}\; F\}$.

We claim that ${\rm Ker}\; F=\langle S_j\; |\; {\rm proj.dim}_A\; S_j< \infty, j=1, 2, \cdots, n(A)\rangle$.  Here, for a class $\mathcal{S}$ of $A$-modules, we denote by $\langle \mathcal{S}\rangle$ the smallest Serre subcategory containing $\mathcal{S}$. Observe that ${\rm Ker}\; F=\langle S_j\; |\; S_j\in {\rm Ker}\; F\rangle$. We identify $F$ with the quotient functor $q$. Then for any simple $A$-module $S_j$, $F(S_j)$ is either zero or a simple $A_r$-module. Applying Lemma \ref{lem:homo}(1) repeatedly, ${\rm proj.dim}_A\; S_j<\infty$ if and only if ${\rm proj.dim}_{A_r}\; F(S_j)<\infty$, which is equivalent to $F(S_j)=0$. Then we are done with the claim.  We observe from the proof  that $n(A_r)=n(A)-\sharp \{1\leq j \leq n(A)\; |\; {\rm proj.dim}_A\; S_j<\infty\}$, and thus $r=\sharp \{1\leq j \leq n(A)\; |\; {\rm proj.dim}_A\; S_j<\infty\}$; compare Proposition \ref{prop:simple}(1).

We conclude that the image of the fully faithful functor $G={\rm Hom}_A(A_r, -)$ is uniquely determined by $A$. Then the uniqueness of the composite homomorphism $A\rightarrow A_r$ follows from Lemma \ref{lem:Gabriel-Pena}.
\end{proof}

\begin{lem}\label{lem:Gabriel-Pena}
Let $\phi\colon A\rightarrow B$ and $\phi'\colon A\rightarrow B'$ be two algebra homomorphisms of artin algebras. Assume that both functors ${\rm Hom}_A(B, -)$ and ${\rm Hom}_A(B', -)$ are fully faithful with the same image in $A\mbox{-}{\rm mod}$. Then there is an isomorphism $\psi\colon B\rightarrow B'$ of algebras such that $\psi\circ \phi=\phi'$
\end{lem}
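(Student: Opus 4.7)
The plan is to exploit the uniqueness of extensions of module structures along a ring epimorphism. First, note that ${\rm Hom}_A(B,-)$ being fully faithful with image in $A\mbox{-mod}$ amounts to saying that the restriction of scalars $\phi_\ast\colon B\mbox{-mod}\to A\mbox{-mod}$ is fully faithful, which is equivalent to $\phi$ being a ring epimorphism; analogously for $\phi'$. Since the two essential images coincide by hypothesis, and the $A$-module $B'$ (with action through $\phi'$) obviously lies in the image of $\phi'_\ast$, it also lies in the image of $\phi_\ast$. Choosing a lift produces a left $B$-module action $\star\colon B\times B'\to B'$ on $B'$ whose restriction along $\phi$ recovers the original $A$-action, i.e.\ $\phi(a)\star b'=\phi'(a)\,b'$ for all $a\in A$, $b'\in B'$.

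Next I would define $\psi\colon B\to B'$ by $\psi(b)=b\star 1_{B'}$. The identity $\psi\circ\phi=\phi'$ then follows directly from the extension property: $\psi(\phi(a))=\phi(a)\star 1_{B'}=\phi'(a)\cdot 1_{B'}=\phi'(a)$. The key technical step is multiplicativity of $\psi$, which reduces to a bimodule-compatibility check: for every $c\in B'$, right multiplication $R_c\colon B'\to B'$, $b'\mapsto b'c$, is $A$-linear (the $A$-action on $B'$ is left multiplication via $\phi'$, which commutes with right multiplication in $B'$), hence is automatically $B$-linear by the full faithfulness of $\phi_\ast$. This gives $(b\star b')c=b\star(b'c)$, and setting $b'=1_{B'}$ yields $b\star c=\psi(b)\cdot c$; in other words, the $\star$-action is simply left multiplication through $\psi$. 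Associativity of $\star$ then forces $\psi(b_1b_2)=\psi(b_1)\psi(b_2)$, and $\psi(1_B)=1_{B'}$ is clear.

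By the symmetric construction one obtains an algebra homomorphism $\psi'\colon B'\to B$ with $\psi'\circ\phi'=\phi$. The composite $\psi'\circ\psi\colon B\to B$ is an algebra endomorphism satisfying $(\psi'\circ\psi)\circ\phi=\phi$, and since $\phi$ is a ring epimorphism this forces $\psi'\circ\psi={\rm id}_B$; symmetrically $\psi\circ\psi'={\rm id}_{B'}$. Hence $\psi$ is the required algebra isomorphism. The main obstacle throughout is precisely the compatibility of $\star$ with right multiplication on $B'$ invoked in the middle paragraph, and it is there that the ring-epi hypothesis on $\phi$ plays its essential role, through the principle that every $A$-linear map between $B$-modules is automatically $B$-linear.
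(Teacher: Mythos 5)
The paper offers no argument of its own for this lemma and simply refers to Gabriel--de la Pe\~{n}a \cite[Theorem 1.2]{GP}; your write-up is a correct, self-contained reconstruction of the standard uniqueness argument for ring epimorphisms with prescribed essential image of restriction, which is precisely what underlies the cited bijection. The chain of steps is right: full faithfulness of restriction is the epimorphism condition, coincidence of the essential images gives a $B$-module structure $\star$ on $B'$ extending the $A$-action through $\phi'$, full faithfulness upgrades the $A$-linear right-multiplications $R_c$ to $B$-linear maps so that $\star$ is left multiplication by $\psi(b)=b\star 1_{B'}$, associativity and unitality of $\star$ make $\psi$ an algebra map with $\psi\circ\phi=\phi'$, and the epimorphism property of $\phi$ and $\phi'$ then forces the two symmetric constructions to be mutually inverse. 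Nothing is missing; this fills in exactly the argument the paper delegates to the reference.
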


\begin{proof}
This is analogous to the bijection in \cite[Theorem 1.2]{GP}.
\end{proof}

We draw some consequences of Theorem \ref{thm:2}. In the following result, the first statement is contained in \cite[Theorem 6]{BFVZ}, and the second is due to \cite{Gus}. By ${\rm max}\; \mathbf{c}(A)$ we mean the maximum of $c_j=l(P_j)$ for $1\leq j\leq n(A)$; it is the Lowey length of the algebra $A$.

\begin{cor}
Let $A$ be a connected Nakayama algebra. Then we have the following statements:
\begin{enumerate}
\item ${\rm gl.dim}\; A< \infty$ if and only if ${\rm det}\; C_A=1$;
\item if ${\rm gl.dim}\;A < \infty$, then  ${\rm gl.dim}\; A\leq 2n(A)-2$ and ${\rm max}\; \mathbf{c}(A)\leq 2n(A)-1$.
\end{enumerate}\end{cor}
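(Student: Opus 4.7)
The plan is to deduce both parts by transporting invariants along the left-retraction sequence
\[
A = A_0 \xrightarrow{\eta_0} A_1 \xrightarrow{\eta_1} \cdots \xrightarrow{\eta_{r-1}} A_r
\]
furnished by Theorem \ref{thm:2}, where $A_r$ is connected self-injective Nakayama and $r \leq n(A) - 1$. For (1), I will iterate Proposition \ref{prop:simple}(5) to obtain $\det C_A = \det C_{A_r}$, so by Theorem \ref{thm:2}(1) it suffices to show $\det C_{A_r} = 1$ if and only if $A_r$ is simple. If $A_r$ is simple, then it is a division algebra with $C_{A_r} = (1)$, so $\det = 1$. Conversely, suppose $A_r$ is not simple; its constant Loewy length $c := l(P^{A_r}) \geq 2$ (connectedness rules out a semisimple $A_r$ with several simples), and every row and every column of $C_{A_r}$ sums to $c$. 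Thus the image of $C_{A_r}\colon \mathbb{Z}^{n(A_r)} \to \mathbb{Z}^{n(A_r)}$ lies in the proper subgroup $\{w : c \mid \sum_i w_i\}$, so $C_{A_r}$ is not a $\mathbb{Z}$-isomorphism and $\det C_{A_r} \neq \pm 1$.

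For the first inequality of (2), iterating Lemma \ref{lem:homo}(2) gives ${\rm gl.dim}\; A \leq {\rm gl.dim}\; A_r + 2r = 2r \leq 2(n(A) - 1)$ since ${\rm gl.dim}\; A_r = 0$; Corollary \ref{cor:findim} provides an alternative route. For the Loewy length bound ${\rm max}\; \mathbf{c}(A) \leq 2n(A) - 1$, I will induct on $n = n(A)$, the base $n = 1$ being trivial. For $n \geq 2$, after normalising $\mathbf{c}(A)$ and forming $L(A)$, the algebra $L(A)$ still has finite global dimension (Lemma \ref{lem:homo}(2)) and has $n - 1$ simples, so by induction $\max \mathbf{c}(L(A)) \leq 2n - 3$. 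By Lemma \ref{lem:Nakayama}(1), for $1 \leq j \leq n - 1$,
\[
c_j \;=\; c'_j + k_j, \qquad k_j := \left[\tfrac{c_j + j - 1}{n}\right],
\]
whence $c_j \leq 2n - 3 + k_j$. Combining this with the defining inequality $n k_j \leq c_j + j - 1$ and with $j \leq n - 1$ yields $(n-1)k_j \leq 3n - 5$, forcing $k_j \leq 2$ and hence $c_j \leq 2n - 1$. A separate inspection of $j = 1$, where $k_1 = 2$ would demand $c_1 \geq 2n$ and contradict $c_1 \leq 2n - 1$, improves this to $c_1 \leq 2n - 2$; the remaining index $c_n$ is then controlled by the normalisation, being either $c_n = 1$ in the line case or $c_n = c_1 + 1 \leq 2n - 1$ in the non-self-injective cycle case.

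The hard part will be the small piece of arithmetic in the inductive step for the Loewy length bound, namely extracting $k_j \leq 2$ cleanly from the defining inequality and the inductive hypothesis, and separately handling $j = 1$ and the index $n$ via the normalisation; by contrast, the Cartan determinant argument in (1) and the global dimension bound in (2) are routine consequences of the transport of structure along the retraction sequence.
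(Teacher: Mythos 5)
Your proposal is correct and follows essentially the same route as the paper: transport $\det C_A$ and the global dimension along the retraction sequence of Theorem \ref{thm:2}, using Proposition \ref{prop:simple}(5) together with the constant column sums of $C_{A_r}$ for (1), and Lemma \ref{lem:Nakayama}(2) plus induction on $n(A)$ for the Loewy-length bound in (2). Your non-surjectivity argument for $\det C_{A_r}\neq 1$ is a mild reformulation of the paper's observation that $c$ divides $\det C_{A_r}$, your use of iterated Lemma \ref{lem:homo}(2) in place of Corollary \ref{cor:findim} is equivalent, and the arithmetic you supply in the inductive step (extracting $k_j\leq 2$ and then $k_1\leq 1$ from $nk_j\leq c_j+j-1$) simply fills in details the paper leaves implicit.
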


\begin{proof}
Consider the admissible sequence $\mathbf{c}(A_r)=(c, c, \cdots, c)$ with $n(A_r)$ copies of $c$. Then in the Cartan matrix $C_{A_r}$, the sum of all entries in each column equals $c$. This implies that $c$ divides ${\rm det }\; C_{A_r}$. It follows that ${\rm det }\; C_{A_r}=1$ if and only if $\mathbf{c}(A_r)=(1)$, that is,  $A_r$ is simple. By Proposition \ref{prop:simple}(5), we have ${\rm det }\; C_{A}={\rm det }\; C_{A_r}$. Then (1) follows from Theorem \ref{thm:2}(1).

The first half in (2) follows from Corollary \ref{cor:findim}. For the second half, we may assume that $A$ is a cycle algebra with its normalized admissible sequence $\mathbf{c}(A)$.  Using induction, we assume that ${\rm max}\; \mathbf{c}(A_1)\leq 2n(A)-3$. Then  by Lemma \ref{lem:Nakayama}(2), we have  that each  $c_1< 2n(A)-1$ and $c_j\leq  2n(A)-1$ for $2\leq j\leq n(A)-1$. Since $c_n=c_1+1$, we have $c_n\leq 2n(A)-1$.
\end{proof}

Recall that $A_r$ is self-injective. Then the stable module category $A_r\mbox{-\underline{mod}}$ has a canonical triangulated structure.

\begin{cor}\label{cor:NakaSing}
Let $A$ be a connected Nakayama algebra. Then the composite homomorphism in (\ref{equ:lr}) induces  a triangle equivalence
$$\mathbf{D}_{\rm sg}(A)\simeq A_r\mbox{-\underline{\rm mod}}.$$
\end{cor}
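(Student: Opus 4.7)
The plan is to assemble the equivalence as a composition of triangle equivalences already established in the preceding sections; no new construction is needed.

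First I would invoke Theorem~\ref{thm:2} to obtain the sequence
\[
A=A_0\stackrel{\eta_0}\longrightarrow A_1 \stackrel{\eta_1}\longrightarrow  A_2\longrightarrow \cdots \stackrel{\eta_{r-1}}\longrightarrow A_r
\]
of left retractions between connected Nakayama algebras, with $A_r$ self-injective. Each $\eta_i\colon A_i\rightarrow A_{i+1}$ is, by construction, a left retraction associated to some localizable $A_i$-module. Hence by Proposition~\ref{prop:sing}, each $\eta_i$ induces a triangle equivalence
\[
(i_\lambda^{(i)})^*\colon \mathbf{D}_{\rm sg}(A_i)\stackrel{\sim}\longrightarrow \mathbf{D}_{\rm sg}(A_{i+1}).
\]

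Next I would compose these $r$ triangle equivalences to obtain $\mathbf{D}_{\rm sg}(A)\simeq \mathbf{D}_{\rm sg}(A_r)$. Since $A_r$ is self-injective, Lemma~\ref{lem:BuchweitzHappel} supplies a canonical triangle equivalence $\mathbf{D}_{\rm sg}(A_r)\simeq A_r\mbox{-\underline{\rm mod}}$. Composing these two equivalences yields the desired triangle equivalence $\mathbf{D}_{\rm sg}(A)\simeq A_r\mbox{-\underline{\rm mod}}$, and by construction it is realized by the composite homomorphism in (\ref{equ:lr}).

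There is essentially no obstacle here, since all ingredients are in place; the only thing worth a line of explicit comment is that the composite is induced, at the level of derived categories, by the derived tensor product along $A\rightarrow A_r$ (equivalently, by the iterated application of the functors $i_\lambda^{(i)}$), so that the resulting singular equivalence is indeed the one attached to the composite algebra homomorphism and not merely an abstract equivalence. If desired one may also remark, using Proposition~\ref{prop:simple}(5) iteratively, that this equivalence enhances the isomorphism of Grothendieck groups $\mathrm{Cok}\, C_A\simeq \mathrm{Cok}\, C_{A_r}\simeq K_0(\mathbf{D}_{\rm sg}(A_r))$.
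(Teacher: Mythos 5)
Your proof is correct and follows essentially the same route as the paper: iterate Proposition~\ref{prop:sing} along the sequence of left retractions from Theorem~\ref{thm:2} to get $\mathbf{D}_{\rm sg}(A)\simeq \mathbf{D}_{\rm sg}(A_r)$, then apply Lemma~\ref{lem:BuchweitzHappel} for the self-injective algebra $A_r$. The additional remarks about the composite being realized by the derived tensor and about Grothendieck groups are harmless elaborations of what the paper leaves implicit.
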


It follows that the singularity category $\mathbf{D}_{\rm sg}(A)$ is Krull-Schmidt; its Auslander-Reiten quiver is a truncated tube of rank $n(A)-r(A)$. Moreover, it is a homogeneous tube, that is, a tube of rank one, if and only if $n(A_r)=1$ and $A_r$ is not simple.

\begin{proof}
Recall from Lemma \ref{lem:BuchweitzHappel}  the triangle equivalence $\mathbf{D}_{\rm sg}(A_r)\simeq A_r\mbox{-\underline{\rm mod}}$. Then the above  triangle equivalence follows from Proposition  \ref{prop:sing}.
\end{proof}

\subsection{Nakayama algebras with at most three simples} Let $A$ be a connected Nakayama algebra with two simple modules, that is, $n(A)=2$. We may assume that $A$ is a cycle algebra which is not self-injective. This implies that its normalized admissible sequence  is given by $\mathbf{c}(A)=(c, c+1)$ for $c\geq 2$.

\begin{lem}\label{lem:n2}
Keep the assumption as above. Then there are two cases:
\begin{enumerate}
\item if $\mathbf{c}(A)=(2k, 2k+1)$ for $k\geq 1$, then $A$ is Gorenstein with ${\rm v.dim}\; A=2$;  $A$ has finite global dimension if and only if $k=1$;
\item  if $\mathbf{c}(A)=(2k+1, 2k+2)$ for $k\geq 1$, then $A$ is non-Gorenstein CM-free  with  ${\rm fin.dim}\; A=1$.
\end{enumerate}
\end{lem}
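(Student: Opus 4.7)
The plan is to analyze both cases via the left retraction $\eta_A \colon A \to L(A)$ associated to the localizable simple $S_n = S_2$ identified in the paragraph preceding the lemma. By Lemma~\ref{lem:Nakayama}(2), $L(A)$ is connected Nakayama with one simple, hence local self-injective; the formula $c'_1 = c_1 - [c_1/2]$ gives Loewy length $k$ in case~(1) and $k+1$ in case~(2). Since $L(A)$ is Gorenstein, Theorem~\ref{thm:1} reduces the Gorensteinness of $A$ to whether $i_\lambda(I_2)$ has finite projective dimension over $L(A)$.

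To apply this reduction I first identify the indecomposable injective envelope $I_2$ of $S_2$. The length of $I_2$ equals the multiplicity of $S_2$ as a composition factor of ${}_AA$; counting over $P_1$ and $P_2$ yields $l(I_2) = 2k+1$ in both cases. In case~(1), $l(I_2) = l(P_2)$ and both modules are uniserial with socle $S_2$, so $I_2 = P_2$; hence $i_\lambda(I_2)$ is projective over $L(A)$, and $A$ is Gorenstein. In case~(2), $l(I_2) = 2k+1 < 2k+2 = l(P_2)$, so $I_2 = S_2^{[2k+1]}$ is a proper non-projective quotient of $P_2$. Applying Lemma~\ref{lem:Nakayama}(1) one computes $l(i_\lambda(I_2)) = (2k+1) - 1 - [2k/2] = k$, which is strictly less than the Loewy length $k+1$ of $L(A)$; thus $i_\lambda(I_2)$ is a non-projective module over the self-injective non-semisimple algebra $L(A)$ and therefore has infinite projective dimension, so $A$ is not Gorenstein.

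For the remaining claims in case~(1): the equality ${\rm v.dim}\; A = 2$ combines the upper bound ${\rm fin.dim}\; A \leq 2d(A) = 2$ from Corollary~\ref{cor:findim} (with $d(A) = 1$ read off from $\theta(1) = \theta(2) = 1$) with the lower bound exhibited by $I_1 = S_2^{[2k]}$, whose minimal projective resolution is $0 \to P_1 \to P_2 \to P_2 \to I_1 \to 0$. Since finite global dimension is equivalent to being both Gorenstein and CM-free, the remaining assertion reduces to showing ${\rm proj.dim}_A\; S_1 < \infty$ if and only if $k = 1$, which follows by direct inspection of the minimal projective resolution of $S_1$.

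For case~(2), CM-freeness follows from Corollary~\ref{cor:CM-freeNak}: one computes $\theta(1) = \theta(2) = 2$, so $j = 2$ is the unique $\theta$-regular element, but $l(P_2) = 2k+2 \neq 2k+1 = l(P_2^*)$ shows that $j = 2$ is not $\theta$-perfect, so no $\theta$-perfect elements exist. For ${\rm fin.dim}\; A = 1$ the lower bound is immediate from the resolution $0 \to P_1 \to P_2 \to S_2 \to 0$. The upper bound is the main obstacle, since Lemma~\ref{lem:homo}(3) only yields ${\rm fin.dim}\; A \leq 2$; I would attack it by classifying the indecomposable modules of finite projective dimension via Corollary~\ref{cor:resolution}. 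Because $\theta(j) = 2$ identically, the minimal projective resolution of any non-projective indecomposable enters, after the first one or two terms, a period-two cycle in which every subsequent projective is $P_2$ and the successive syzygies are proper submodules of $P_2$; the cycle terminates only when some syzygy coincides with the submodule $P_1 = {\rm rad}(P_2)$ of $P_2$, and a parity analysis on $(j, l)$ for $X = S_j^{[l]}$ shows this occurs precisely when $X = S_2$, whose resolution is then the aforementioned $0 \to P_1 \to P_2 \to S_2 \to 0$.
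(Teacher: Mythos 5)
Your argument is correct and uses the same core reduction as the paper: apply Theorem~\ref{thm:1} to the left retraction associated to $S_2$, observe that $L(A)$ is local self-injective, identify $I_2$ explicitly, and decide the Gorenstein question by whether $i_\lambda(I_2)$ is projective over $L(A)$. Your computation of the Loewy length of $L(A)$ as $\lceil c/2\rceil$ (so $k$ in case~(1) and $k+1$ in case~(2)) is the correct reading of Lemma~\ref{lem:Nakayama}(2) and is in fact sharper than the paper's displayed $\mathbf{c}(L(A))=([\tfrac{c}{2}])$, which in case~(2) would give $k$ and make $i_\lambda(I_2)$ look projective; your version avoids that slip. The genuine divergence is in the secondary claims: for CM-freeness in case~(2) and for the ``finite global dimension iff $k=1$'' assertion in case~(1), the paper argues uniformly through minimality of $\mathbf{D}_{\rm sg}(A)\simeq L(A)\mbox{-\underline{mod}}$ (using Corollary~\ref{cor:NakaSing} and Corollary~\ref{cor:CM-free}), whereas you compute directly --- ruling out $\theta$-perfect elements via $l(P_2)\neq l(P_2^*)$ and Corollary~\ref{cor:CM-freeNak}, and inspecting the minimal projective resolution of $S_1$. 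Both routes are valid; the paper's is shorter and conceptually uniform, yours is more elementary and stays inside the Nakayama combinatorics. You also supply arguments, which the paper omits, for ${\rm v.dim}\;A=2$ (via $d(A)=1$, Corollary~\ref{cor:findim}, and the resolution of $I_1$) and for ${\rm fin.dim}\;A=1$ (via the syzygy parity pattern forced by $\theta\equiv 2$); the latter is only sketched, and the case check confirming that $S_2$ is the unique non-projective indecomposable of finite projective dimension should be written out in full, but the idea is sound and does go through.
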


\begin{proof}
We consider the retraction $\eta\colon A\rightarrow L(A)$ associated to $S_2$. Observe that $L(A)$ has a unique simple module with $\mathbf{c}(L(A))=([\frac{c}{2}])$, and that $L(A)$ is self-injective; moreover, the stable category $L(A)\mbox{-\underline{mod}}$, as a triangulated category, is minimal. By Corollary \ref{cor:NakaSing}, the singularity category $\mathbf{D}_{\rm sg}(A)$ is minimal; here $A_r=L(A)$. Then $A$ has finite global dimension if and only if $L(A)$ is simple, that is, $[\frac{c}{2}]=1$. This is equivalent to that $\mathbf{c}(A)=(2,3)$.

For (1), we have that the length of the indecomposable $L(A)$-module $i_\lambda(I_2)$ equals $k=[\frac{c}{2}]$, and it is projective. By Theorem \ref{thm:1} $A$ is Gorenstein. The same reasoning yields that the algebra $A$ in (2) is non-Gorenstein. In this case, by the minimality of  $\mathbf{D}_{\rm sg}(A)$, we infer that $A$ is CM-free; see Corollary \ref{cor:CM-free}.
\end{proof}

In what follows, we assume that $A$ is a cycle algebra with $n(A)=3$ which is not self-injective. Then we may assume that the normalized admissible sequence is $\mathbf{c}(A)=(c, c+j, c+1)$ with $c\geq 2$ and $j=0, 1, 2$.

\begin{cor}\label{cor:finitegldim}
Keep the assumption as above. Then $A$ has finite global dimension if and only if its normalized admissible sequence $\mathbf{c}(A)$ is $(2,2, 3)$, $(2,4,3)$, $(3, 4,4)$ or  $(3, 5,4)$. In this case, the global dimension of $A$ equals $3$, $2$, $4$ and $2$, respectively.
\end{cor}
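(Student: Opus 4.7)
The strategy is to reduce to the two-simple case already handled by Lemma~\ref{lem:n2} via one step of left retraction. Since $A$ is a cycle algebra with $n(A)=3$ that is not self-injective, the simple $S_3$ is localizable, and we may form $\eta_A\colon A\rightarrow L(A)$ with $n(L(A))=2$ by Proposition~\ref{prop:simple}(1). By Lemma~\ref{lem:homo}(2), $A$ has finite global dimension if and only if $L(A)$ does; by Lemma~\ref{lem:n2}, this is equivalent to $L(A)$ being either a line algebra (necessarily with $\mathbf{c}(L(A))=(2,1)$) or a cycle algebra whose normalized admissible sequence is $(2,3)$.

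To identify which $\mathbf{c}(A)=(c,c+j,c+1)$, $c\geq 2$ and $j\in\{0,1,2\}$, satisfy this, I apply Lemma~\ref{lem:Nakayama}(2) to compute
\[ c_1' \;=\; c - \left[\tfrac{c}{3}\right], \qquad c_2' \;=\; (c+j) - \left[\tfrac{c+j+1}{3}\right]. \]
By Lemma~\ref{lem:Nakayama}(4), $L(A)$ is a line algebra iff $c_{n-1}=c_2=2$, forcing $(c,j)=(2,0)$ and $\mathbf{c}(A)=(2,2,3)$. For $L(A)$ a cycle with normalized $\mathbf{c}(L(A))=(2,3)$, the condition $c_1'=2$ forces $c\in\{2,3\}$ (since $c-[c/3]\geq 3$ for $c\geq 4$), leaving only finitely many pairs to check; direct evaluation gives $(c,j)\in\{(2,2),(3,1),(3,2)\}$, yielding the three further candidates $\mathbf{c}(A)\in\{(2,4,3),(3,4,4),(3,5,4)\}$. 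The remaining pairs $(c,j)\in\{(2,1),(3,0)\}$ yield $\mathbf{c}(L(A))=(2,2)$, which is self-injective but not simple, hence has infinite global dimension.

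To compute the global dimensions of the four resulting algebras, I would use Corollary~\ref{cor:resolution}: for each algebra I first compute the map $\theta(j)=\phi_3(j+c_j)$, and then for each simple $S_j$ I trace the minimal projective resolution, whose successive syzygies are controlled by Lemma~\ref{lem:top}(2), with tops alternating between $S_{\theta^i(k)}$ and $S_{\theta^i(j)}$ (where $k=\phi_3(j+1)$). The resolution terminates precisely at the stage where a syzygy coincides with an indecomposable projective, detected by matching its length and top against the data $\mathbf{c}(A)$. Taking the maximum projective dimension over the three simples yields the global dimension; the direct calculations produce the values $3,2,4,2$ claimed.

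The main obstacle lies in this last computation: the sandwich ${\rm gl.dim}\,L(A)\leq{\rm gl.dim}\,A\leq{\rm gl.dim}\,L(A)+2$ from Lemma~\ref{lem:homo}(2) does not suffice to distinguish $\mathbf{c}(A)=(3,4,4)$ (global dimension $4$) from $\mathbf{c}(A)=(3,5,4)$ (global dimension $2$), as both reduce to $\mathbf{c}(L(A))=(2,3)$ with gldim $2$. Hence an explicit resolution analysis via Corollary~\ref{cor:resolution} is needed to pin down the exact values rather than merely an interval.
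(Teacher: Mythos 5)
Your approach is the same as the paper's: reduce to $n(L(A))=2$ by one left retraction, invoke Lemma~\ref{lem:homo}(2), Lemma~\ref{lem:n2} and the formula $\mathbf{c}(L(A)) = \mathbf{c}(A)'$ from Lemma~\ref{lem:Nakayama}(2), then enumerate. The pairs $(c,j)$ you find are correct, and you rightly observe that the sandwich ${\rm gl.dim}\;L(A) \leq {\rm gl.dim}\;A \leq {\rm gl.dim}\;L(A) + 2$ does not pin down the exact values $3,2,4,2$, so one must actually trace minimal resolutions via Corollary~\ref{cor:resolution} --- a point the paper's terse ``we infer the result immediately'' leaves implicit.

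There is one small gap in the case analysis. The sequence $\mathbf{c}(A)'=(c_1',c_2')$ is computed in the ordering of simples inherited from $A$ and need not be normalized; the statement ``$L(A)$ has normalized admissible sequence $(2,3)$'' therefore means $(c_1',c_2') \in \{(2,3),(3,2)\}$, not just $c_1' = 2$. (The paper's proof is careful to say ``up to cyclic permutations'' for exactly this reason.) So after observing $c_1' \geq 3$ for $c\geq 4$, you must still rule out $(c_1',c_2')=(3,2)$: the equality $c_1' = c - \left[ c/3 \right] = 3$ forces $c=4$, and then $c_2' = (4+j) - \left[(5+j)/3\right]$ equals $3,3,4$ for $j=0,1,2$, never $2$. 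With this check added, your argument is complete and coincides with the paper's.
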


\begin{proof}
 Recall that $A$ has finite global dimension if and only if so does $L(A)$. This is equivalent to that  $\mathbf{c}(L(A))$ is $(2,1)$ or $(2,3)$ up to cyclic permutations; see Lemma \ref{lem:n2}(1). Applying $\mathbf{c}(L(A))=\mathbf{c}(A)'$, we infer the result immediately.
\end{proof}

The following result classifies connected Nakayama algebras with three simple modules according to the trichotomy: Gorenstein, non-Gorenstein CM-free, non-Gorenstein but not CM-free.

\begin{prop}\label{prop:classification}
Let $A$ be a cycle algebra with $n(A)=3$ which is not self-injective. Denote by $\mathbf{c}(A)$ its normalized admissible sequence. Then we have the following statements:
\begin{enumerate}
\item the algebra $A$ is Gorenstein if and only if $\mathbf{c}(A)=(2,2,3)$, $(2,4,3)$,  $(3k, 3k, 3k+1)$, $(3k, 3k+1, 3k+1)$, $(3k, 3k+2, 3k+1)$ or  $(3k+1, 3k+2, 3k+2)$ for $k\geq 1$;
\item the algebra $A$ is non-Gorenstein CM-free if and only if $\mathbf{c}(A)=(2, 3, 3)$, $(3k+1,3k+1, 3k+2)$, $(3k+1, 3k+3, 3k+2)$,  $(3k+2, 3k+2, 3k+3)$   or  $(3k+2, 3k+4, 3k+3)$ for $k\geq 1$;
\item the algebra $A$ is non-Gorenstein, but not CM-free if and only if $\mathbf{c}(A)=(3k+2, 3k+3, 3k+3)$ for $k\geq 1$; in this case, all indecomposable non-projective Gorenstein projective $A$-modules are given by $S_2^{[3m]}$ for $1\leq m\leq k$.
\end{enumerate}
In case (1), the virtual dimension ${\rm v.dim}\; A$ equals $3$, $2$, $2$, $4$, $2$ and $2$, respectively.
\end{prop}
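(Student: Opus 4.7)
The plan is to apply the left retraction $\eta_A\colon A\to L(A)$ associated with the localizable simple $S_3$, thereby reducing the classification to Lemma \ref{lem:n2}, which already handles the two-simple case. By Lemma \ref{lem:Nakayama}(2), the nine normalized admissible sequences $\mathbf{c}(A)=(c,c+j,c+1)$ with $c=3k+s$ and $s,j\in\{0,1,2\}$ (subject to $c\ge 2$) determine $\mathbf{c}(L(A))=(c'_1,c'_2)$ via $c'_i=c_i-[\tfrac{c_i+i-1}{3}]$, and the resulting $L(A)$ is either (i) of finite global dimension ($\mathbf{c}(L(A))\in\{(2,1),(2,3)\}$), (ii) Gorenstein of virtual dimension two ($\mathbf{c}(L(A))=(2m,2m+1)$ with $m\ge 2$), (iii) non-Gorenstein CM-free ($\mathbf{c}(L(A))=(2m+1,2m+2)$), or (iv) self-injective ($\mathbf{c}(L(A))=(m,m)$).

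To decide Gorensteinness, Theorem \ref{thm:1} reduces the question to whether $i_\lambda(I_3)$ has finite projective dimension over $L(A)$. Because $I_3$ is injective with socle $S_3$, the sequence (\ref{equ:1}) collapses to $0\to S_3\to I_3\to ii_\lambda(I_3)\to 0$, giving $i_\lambda(I_3)\cong I_3/S_3$. I would compute $l(I_3)$ as the largest $l$ with $l\le c_{3-l+1\bmod 3}$ and read off the top of $I_3$, then apply Lemma \ref{lem:Nakayama}(1) to realize $i_\lambda(I_3)$ as an explicit uniserial $L(A)$-module. In type (i) this has finite projective dimension automatically, giving $A$ Gorenstein; in type (iv), finite projective dimension equals projectivity and reduces to comparing $l(i_\lambda(I_3))$ with the length of the projective cover of its top in $L(A)$; in type (ii), I would compute the syzygies directly, finding that $i_\lambda(I_3)$ is projective in the Gorenstein instances and periodic (hence of infinite projective dimension) in the non-Gorenstein one. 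Type (iii) gives $A$ non-Gorenstein immediately because $L(A)$ is not Gorenstein.

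For CM-freeness in the non-Gorenstein cases, Proposition \ref{prop:Gproj} handles type (iii). In the remaining non-Gorenstein cases where $L(A)$ is not CM-free, I pass to Proposition \ref{prop:comreso}: compute $\theta$, identify the $\theta$-regular and $\theta$-perfect elements, and enumerate candidates $S_j^{[l]}$ with both $j$ and $\phi_3(j+l)$ $\theta$-perfect. For $\mathbf{c}(A)\in\{(3k+1,3k+1,3k+2),(3k+2,3k+4,3k+3)\}$ with $k\ge 1$, a direct calculation shows no element is $\theta$-perfect, so Corollary \ref{cor:CM-freeNak} yields CM-freeness; for $\mathbf{c}(A)=(2,3,3)$ the unique $\theta$-perfect element is $2$, but the compatibility $\phi_3(2+l)=2$ with $l<l(P_2)=3$ admits no valid $l$, so Proposition \ref{prop:comreso}(1) rules out any non-projective indecomposable Gorenstein projective, and $A$ is still CM-free.

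The main obstacle is the remaining family $\mathbf{c}(A)=(3k+2,3k+3,3k+3)$ with $k\ge 1$, where $L(A)$ is self-injective. Here $\theta$ acts as the identity on $\{2,3\}$, and the computations $l(I_2)=3k+3=l(P_2)$ and $l(I_3)=3k+2\ne l(P_3)=3k+3$ show that only $2$ is $\theta$-perfect. Demanding both $j=2$ and $\phi_3(2+l)=2$ forces $l\equiv 0\pmod 3$; combined with $l<l(P_2)=3k+3$ this isolates the candidate list $S_2^{[3m]}$ for $1\le m\le k$. I would then verify these candidates are genuinely Gorenstein projective by building the complete resolution $\cdots\to P_2\to P_2\to P_2\to\cdots$ whose cocycles alternate between $S_2^{[3m]}$ and $S_2^{[3(k+1-m)]}$ (each $S_2^{[3m]}$ embedding in $P_2$ as $\mathrm{rad}^{3(k+1-m)}P_2$); acyclicity of the $\mathrm{Hom}_A(-,A)$-dual follows from Lemma \ref{lem:valuation}(4) because $l(P_2)=l(I_2)=3k+3$. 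The $k=0$ instance gives an empty list, recovering $\mathbf{c}(A)=(2,3,3)$ as CM-free. Finally, the virtual dimensions in case (1) follow from $\mathrm{v.dim}\,A=\mathrm{fin.dim}\,A$, the bound $\mathrm{v.dim}\,A\le\mathrm{v.dim}\,L(A)+2$ from Lemma \ref{lem:homo}(3), and a direct computation of $\mathrm{inj.dim}\,P_j$ using the dual of Corollary \ref{cor:resolution}.
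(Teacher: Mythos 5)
Your proposal is correct and in its essence follows the same route as the paper: reduce via the left retraction by $S_3$ to the two-simple case of Lemma~\ref{lem:n2}, decide Gorensteinness via Theorem~\ref{thm:1} by computing $\mathrm{proj.dim}_{L(A)}\,i_\lambda(I_3)$, and decide CM-freeness either via Proposition~\ref{prop:Gproj} (when $L(A)$ is already CM-free) or via the $\theta$-perfect criterion of Proposition~\ref{prop:comreso} and Corollary~\ref{cor:CM-freeNak}. Your computations of $\mathbf{c}(L(A))$, of $I_3$, of the $\theta$-regular and $\theta$-perfect sets, and the resulting candidate list $S_2^{[3m]}$ in case~(3) all agree with the paper.

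There is, however, one point where you are actually more careful than the paper's own proof, and it is worth flagging. For $\mathbf{c}(A)=(3k+2,3k+4,3k+3)$ with $k\geq 1$, the formula $c'_j=c_j-[\tfrac{c_j+j-1}{3}]$ gives $\mathbf{c}(L(A))=(2k+2,\,2k+3)=(2(k+1),\,2(k+1)+1)$, which by Lemma~\ref{lem:n2}(1) is \emph{Gorenstein} (not CM-free) of infinite global dimension for $k\geq 1$. The paper dispatches this case with ``Similar argument works,'' referring to its treatment of $(3k+1,3k+3,3k+2)$ where $L(A)$ is CM-free, but that argument does not carry over: Proposition~\ref{prop:Gproj} only transports CM-freeness from $L(A)$ to $A$, and here $L(A)$ is not CM-free. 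Your route---checking that $\theta$ collapses to the single regular element $\{3\}$ with $l(P_3)=3k+3\neq 3k+2=l(P_3^*)$, so there are no $\theta$-perfect elements and Corollary~\ref{cor:CM-freeNak} gives CM-freeness, while non-Gorensteinness follows from Theorem~\ref{thm:1} because $i_\lambda(I_3)=i_\lambda(S_2^{[3k+2]})$ has length $2k+1$ and infinite projective dimension over $L(A)$---is the correct treatment of this subfamily. So the proposal is not only sound but repairs an imprecision in the paper's written argument for one of the nine families.
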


\begin{proof}
It suffices to prove the ``if" part of all the statements. For (1), by Corollary \ref{cor:finitegldim} if  $\mathbf{c}(A)=(2,2,3)$ or $(2,4,3)$,  the algebra $A$ has finite global dimension, and thus it is  Gorenstein. We consider the case $\mathbf{c}(A)=(3k, 3k, 3k+1)$ for $k\geq 1$. Then the left retraction $L(A)$ of $A$ with respect to $S_3$ satisfies that $\mathbf{c}(L(A))=(2k, 2k)$,  and thus $L(A)$ is self-injective. Observe that $I_3$ is projective. Then the $L(A)$-module $i_\lambda(I_3)$ is projective. By Theorem \ref{thm:1} the algebra $A$ is Gorenstein. Similar argument works for the remaining three cases.

Recall that a CM-free algebra is non-Gorenstein if and only if it has infinite global dimension. For (2), assume that $\mathbf{c}(A)=(3k+1, 3k+3, 3k+2)$ with $k\geq 1$. Then we have $\mathbf{c}(L(A))=(2k+1, 2k+2)$ and by Lemma \ref{lem:n2}, $L(A)$ is CM-free of infinite global dimension. Thus by Propositions \ref{prop:Gproj} and Lemma \ref{lem:homo}(2), $A$ is CM-free of infinite global dimension. Similar argument works for the case $\mathbf{c}(A)= (3k+2, 3k+2, 3k+3)$ or $(3k+2, 3k+4, 3k+3)$.

We consider the case $\mathbf{c}(A)=(3k+1,3k+1, 3k+2)$ for $k\geq 1$. Then we have  $\mathbf{c}(L(A))=(2k+1, 2k+1)$, and thus $L(A)$ is self-injective and has infinite global dimension. So by Lemma \ref{lem:homo}(2), the algebra  $A$ has infinite global dimension. Observe that the set of  $\theta$-regular elements is  $\{2, 3\}$,  and $\theta$ sends $2$ to $3$, and $3$ to $2$; moreover, $l(P_2^*)=3k+1$ and $l(P_3^*)=3k+1$, where $(-)^*={\rm Hom}_A(-, A)$. Then we infer that there are no $\theta$-perfect elements.  By Corollary \ref{cor:CM-freeNak} the algebra  $A$ is CM-free. The only remaining case in (2) is $(2, 3, 3)$, which follows from the following argument for (3) (take $k$ to be zero).

For (3), we assume that $\mathbf{c}(A)=(3k+2, 3k+3, 3k+3)$. Then we have $\mathbf{c}(L(A))=(2k+2, 2k+2)$, and thus $L(A)$ is self-injective. Observe that $I_3=S_2^{[3k+2]}$. It follows that the length of $i_\lambda(I_3)$ is $2k+1$; see Lemma \ref{lem:Nakayama}(1). Hence, the $L(A)$-module $i_\lambda(I_3)$ is not projective and thus has infinite projective dimension. It follows from Theorem \ref{thm:2} that $A$ is non-Gorenstein. Observe that the modules $S_2^{[3m]}$ are Gorenstein projective, whose complete resolution is periodic as follows
$$\cdots \rightarrow P^2 \stackrel{f}\rightarrow P^2\stackrel{g}\rightarrow P^2 \stackrel{f}\rightarrow P^2 \stackrel{g}\rightarrow P^2 \rightarrow \cdots .$$
Here, we have that $\nu(f)=3(k-m+1)$ and $\nu(g)=3m$.

We claim that any indecomposable Gorenstein projective $A$-module $X$, that is not projective, is of the form $S_2^{[3m]}$. Indeed, the set of $\theta$-regular elements are $\{2, 3\}$, on which $\theta$ acts as the identity. Observe that $l(P_2^*)=3k+3$ and $l(P_3^*)=3k+2$. Then the only $\theta$-perfect element is $2$. It follows from Proposition \ref{prop:comreso} that $X$ fits into an exact sequence $P^2\rightarrow P^2\rightarrow X\rightarrow 0$. This implies that $X$ is isomorphic to $S_2^{[3m]}$ for $1\leq m\leq k$. Then we are done.
\end{proof}

\bibliography{}

\vskip 10pt

 {\footnotesize \noindent Xiao-Wu Chen, Yu Ye \\
  School of Mathematical Sciences, University of Science and Technology of
China, Hefei 230026, Anhui, PR China \\
Wu Wen-Tsun Key Laboratory of Mathematics, USTC, Chinese Academy of Sciences, Hefei 230026, Anhui, PR China.\\
URL: http://home.ustc.edu.cn/$^\sim$xwchen, http://staff.ustc.edu.cn/$^\sim$yeyu}

\end{document}